\pdfoutput=1
\RequirePackage{ifpdf}
\ifpdf 
\documentclass[pdftex]{sigma}
\else
\documentclass{sigma}
\fi

\usepackage{mathrsfs}\usepackage[all]{xy}

\renewcommand\a{\alpha}
\renewcommand\b{\beta}
\newcommand\g{\gamma}
\renewcommand\d{\delta}
\newcommand\e{\varepsilon}
\renewcommand\th{\theta}

\newcommand\ol{\overline}

\newcommand{\wh}{\widehat}

\numberwithin{equation}{section}

\newtheorem{Theorem}{Theorem}[section]

\newtheorem{Lemma}[Theorem]{Lemma}
\newtheorem{Proposition}[Theorem]{Proposition}
 { \theoremstyle{definition}
\newtheorem{Definition}[Theorem]{Definition}
\newtheorem*{Notation}{Notation}

\newtheorem{Remark}[Theorem]{Remark} }

\begin{document}

\allowdisplaybreaks

\newcommand{\arXivNumber}{2411.18961}

\renewcommand{\PaperNumber}{090}

\FirstPageHeading

\ShortArticleName{The Fefferman Metric for Twistor CR Manifolds}

\ArticleName{The Fefferman Metric for Twistor CR Manifolds\\ and Conformal Geodesics in Dimension Three}

\Author{Taiji MARUGAME}

\AuthorNameForHeading{T.~Marugame}

\Address{Department of Mathematics, The University of Electro-Communications, \\
1-5-1 Chofugaoka, Chofu, Tokyo 182-8585, Japan}
\Email{\href{mailto:marugame@uec.ac.jp}{marugame@uec.ac.jp}}

\ArticleDates{Received June 03, 2025, in final form October 16, 2025; Published online October 24, 2025}

\Abstract{We give an explicit description of the Fefferman metric for twistor CR manifolds in terms of Riemannian structures on the base conformal $3$-manifolds. As an application, we prove that chains and null chains on twistor CR manifolds project to conformal geodesics, and that any conformal geodesic has lifts both to a chain and a null chain. By using this correspondence, we give a variational characterization of conformal geodesics in dimension three which involves the total torsion functional.}

\Keywords{twistor CR manifold; Fefferman metric; conformal geodesic}

\Classification{53B20; 32V05; 53C18}

\section{Introduction}
The Fefferman construction in CR geometry associates a pseudo-Riemannian conformal manifold $\bigl(\mathcal{C}, \bigl[g^{\mathrm{F}}\bigr]\bigr)$, called the {\it Fefferman space}, to any non-degenerate CR manifold ${\bigl(M, T^{1, 0}M\bigr)}$~\cite{F, Lee}.
 The manifold $\mathcal{C}$ is a circle bundle over $M$ and the Fefferman metric $g^{\mathrm{F}}$ is an $S^1$-invariant metric on $\mathcal{C}$ which is conformally flat if and only if $M$ is CR isomorphic to the hyperquadric, i.e., the flat model of CR manifold. Since the conformal structure $\bigl[g^{\mathrm{F}}\bigr]$ is canonically defined from the CR structure, the construction enables us to apply conformal geometry methods to the study of CR manifolds.

 The aim of this paper is to describe explicitly the Fefferman metric for a special class of~CR $5$-manifolds, namely {\it twistor CR manifolds}, and apply it to $3$-dimensional conformal geometry. Given a conformal $3$-manifold $\bigl(\Sigma^3, [g]\bigr)$ of Riemannian signature, LeBrun's twistor CR manifold~\cite{LeB} is defined as the space of projectivized complex null (co)vectors:
\[
M^5:=\{[\zeta]\in\mathbb{P}(\mathbb{C}T^*\Sigma)\mid g(\zeta, \zeta)=0 \}.
\]
Each fiber of $M$ is a rational curve in $\mathbb{P}(\mathbb{C}T_x^*\Sigma)\cong \mathbb{CP}^2$, and it is shown that $M$ has a canonical CR structure of Lorentzian signature. We mention the work~\cite{Low} of Low, which also constructs the Fefferman metric of twistor CR manifolds over more general 3-manifolds with connections. Our description of the Fefferman metric is different from that of~\cite{Low}, and has the advantage that it is constructed more directly from the conformal structure on $\Sigma$ without passing through the Tanaka--Webster connection on $M$.

Fixing a representative metric $g\in[g]$, we show that the $\mathrm{U}(1)$-bundle
\[
\wh{\mathcal{C}}:=\bigl\{\zeta\in\mathbb{C}T^*\Sigma\mid g(\zeta, \zeta)=0,\, g\bigl(\zeta, \ol\zeta\bigr)=1\bigr\}
\]
over $M$ can be identified with a double covering of the Fefferman space $\mathcal{C}$ by constructing an `adapted' coframe \smash{$\bigl(\tilde\theta^0=\theta, \tilde\theta^1, \tilde\theta^2, \tilde\theta^{\ol1}, \tilde\theta^{\ol2}, \tilde\theta^3\bigr)$} on \smash{$\wh{\mathcal{C}}$} out of the Riemannian structure $g$.
Here, $\th$ is a~contact form on $M$ determined by $g$, and \smash{$\bigl(\tilde\th^1, \tilde\th^2\bigr)$} is a `twisted' coframe for $T^{1, 0}M$ while~\smash{$\tilde\th^3$} restricts to a coframe for the $S^1$-fibers. In this coframe, (the pullback of) the Fefferman metric~$g^{\mathrm{F}}$ on \smash{$\wh{\mathcal{C}}$} is written as
\[
g^{\mathrm{F}}=2\bigl(\tilde\theta^1\cdot\tilde\theta^{\ol2}+\tilde\theta^2\cdot\tilde\theta^{\ol1}+\theta\cdot \tilde\theta^3\bigr).
\]
Using this expression, we compute the Levi-Civita connection and the curvature tensor of $g^{\mathrm{F}}$ in terms of those of $g$. In particular, the Weyl curvature of $g^{\mathrm{F}}$ is described by the Hodge dual of the Cotton tensor of $g$. As a consequence, we can recover \cite[Theorem~6.7]{Low}, which states that the Fefferman metric is conformally flat if and only if $(\Sigma, [g])$ is conformally flat (Theorem~\ref{flat}).

As another application, we establish a correspondence between distinguished curves on the CR manifold $\bigl(M, T^{1, 0}M\bigr)$ and the conformal manifold $(\Sigma, [g])$.

Since null geodesics of $g^{\mathrm{F}}$ are conformally invariant up to reparametrizations, their projections define a CR invariant family of curves on $M$, called {\it $($null$)$ chains}. The infinitesimal generator~$K$ of the $S^1$-action on $\wh{\mathcal{C}}$ is a Killing vector field,
so $c:=g^{\mathrm{F}}(K, \dot \gamma(t))$ is constant in $t$ for any null geodesic $\gamma$. The projection of $\gamma$ is called a chain if $c\neq0$ and a null chain if $c=0$. Chains are transverse to the contact distribution $H=\operatorname{Re} T^{1, 0}M$ and uniquely determined by the initial velocity. Thus, they can be considered as `geodesics' in CR geometry. In contrast, null chains are tangent to $H$ and we also need information of the initial acceleration to specify a null chain; see~\cite{Koch}.

The base conformal manifold $(\Sigma, [g])$, on the other hand, has a conformally invariant family of curves, called {\it conformal geodesics} or {\it conformal circles}. They are defined by a conformally invariant
third order ODE
\[
\nabla_{\dot x}\nabla_{\dot x}\dot x-\frac{3g(\dot x, \nabla_{\dot x}\dot x)}{|\dot x|^2}\nabla_{\dot x}\dot x+\frac{3|\nabla_{\dot x}\dot x|^2}{2|\dot x|^2}\dot x
+2P(\dot x, \dot x)\dot x-|\dot x|^2P(\dot x)=0,
\]
where $P$ denotes the Schouten tensor of $g\in[g]$.
Due to its conformal invariance, conformal geodesics play an important role in general relativity to examine the structure of spacetime; see, e.g.,~\cite{DT, FS}.
Although the conformal geodesic equation specifies distinguished parametrizations of $x(t)$, called projective parameters \cite{BE}, we consider conformal geodesics as unparametrized curves in this paper.

By computing the null geodesic equation for the Fefferman metric $g^{\mathrm{F}}$, we prove that these invariant curves on $M$ and $\Sigma$ are related as follows.

\begin{Theorem}\label{chain-projection}
Let $(\Sigma, [g])$ be a conformal $3$-manifold and $M$ the twistor CR manifold over $\Sigma$. Then, the following hold:
\begin{itemize}\itemsep=0pt
\item[$(1)$] Any chain and any null chain on $M$ project to a constant curve or a conformal geodesic on $\Sigma$.
\item[$(2)$] For any conformal geodesic $x(t)$, $a\le t\le b$, on $\Sigma$ and $[\zeta_0]\in M_{x(a)}$, there exists a unique chain $($resp.\ a null chain$)$ through $[\zeta_0]$ which projects to $x(t)$ when $g(\eta_0, \dot x(a))\neq0$ $($resp.\ $g(\eta_0, \dot x(a))=0)$, where $\eta_0:= {\rm i}\zeta_0\times \ol{\zeta_0}$.
\end{itemize}
\end{Theorem}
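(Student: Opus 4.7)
The plan is to use the explicit coframe formula for $g^{\mathrm{F}}$ on $\wh{\mathcal{C}}$ together with the Levi-Civita connection already computed from it. The $S^1$-Killing field $K$ is the frame vector dual to $\tilde\theta^3$, so the conserved quantity $c=g^{\mathrm{F}}(K,\dot\gamma)$ equals $\theta(\dot\gamma)$; a null geodesic accordingly descends to a chain or a null chain on $M$ according as $\theta(\dot\gamma)\neq 0$ or $\theta(\dot\gamma)=0$. The entire proof reduces to writing out the null geodesic ODE for $g^{\mathrm{F}}$ in the adapted frame, eliminating the vertical data, and matching what remains on $\Sigma$ with the conformal geodesic equation.

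For part (1), let $\gamma(t)$ be a null geodesic in $\wh{\mathcal{C}}$ projecting to $x(t)$ in $\Sigma$. Decomposing $\dot\gamma$ in the adapted frame gives components $\bigl(a^0,a^1,a^2,a^{\ol 1},a^{\ol 2},a^3\bigr)$, in which $a^1,a^2$ and their conjugates are determined by $\dot x$ together with the chosen lift $\zeta(t)$ of $x(t)$ into $\wh{\mathcal{C}}$, while $a^0$ and $a^3$ encode respectively the Reeb and $S^1$-vertical motion. The geodesic equations split into an algebraic part, which solves for the vertical data in terms of $x(t)$ and $\zeta(t)$, and a horizontal part, which after elimination reduces to a third-order ODE in $x(t)$ alone. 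The curvature contributions enter through the Schouten tensor of $g$ via the connection formulas of the previous section; matching coefficients identifies the resulting ODE with the conformal geodesic equation, using the paper's convention that conformal geodesics are taken as unparametrized curves. The constant-curve case corresponds to the degenerate situation in which $\dot\gamma$ lies in a fiber of $\wh{\mathcal{C}}\to\Sigma$ throughout.

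For part (2), I would construct the lift from initial data. Given $x(t)$ and $[\zeta_0]\in M_{x(a)}$, choose any $\zeta_0\in\wh{\mathcal{C}}$ over $[\zeta_0]$; the $S^1$ freedom is harmless since the curve on $M$ does not depend on this choice. The initial velocity of $\gamma$ is then uniquely determined by the null condition, the projection $\dot x(a)$, and the value of $a^0(a)$, so the dichotomy chain versus null chain reduces to whether $a^0(a)\neq 0$ or $a^0(a)=0$. A short computation in the adapted frame shows that $a^0(a)$ is a nonzero multiple of $g(\eta_0,\dot x(a))$: indeed $\eta_0={\rm i}\zeta_0\times\ol{\zeta_0}$ spans the real orthogonal complement of $\mathrm{span}_{\mathbb{R}}\{\mathrm{Re}\,\zeta_0,\mathrm{Im}\,\zeta_0\}$ in $T^*_{x(a)}\Sigma$, and under the metric dual this is precisely the image in $T_{x(a)}\Sigma$ of the Reeb direction of $M$ at $[\zeta_0]$. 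In the chain case, a chain is determined by its initial point and velocity, so existence and uniqueness follow from standard ODE theory. In the null chain case the null geodesic ODE is second order in the adapted frame and thus requires in addition the initial acceleration, which is supplied by $\ddot x(a)$ from the conformal geodesic data; this yields the claimed unique lift.

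The main obstacle is the identification step in part (1): one must verify that the third-order ODE obtained by eliminating the vertical data from the null geodesic system coincides exactly with the conformal geodesic equation, including the coefficient of the Schouten-tensor term. This is a bookkeeping task, but it is the place where all the curvature computations of the previous sections must align precisely.
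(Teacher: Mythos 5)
Your part (1) follows the paper's own route: decompose $\dot\gamma$ in the adapted coframe, write the null geodesic system using the connection forms, and eliminate the vertical data to land on the constant-speed conformal geodesic equation, with the constant curves coming from $\dot\gamma^0=\dot\gamma^1=0$. As an outline this is correct, modulo the coefficient bookkeeping you yourself flag.

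Part (2) has a genuine gap. You claim the initial velocity of the lift is ``uniquely determined by the null condition, the projection $\dot x(a)$, and the value of $a^0(a)$.'' It is not: the horizontal components $a^0=g(\eta_0,\dot x(a))$ and $a^1=g(\zeta_0,\dot x(a))$ are fixed by $\dot x(a)$ and $\zeta_0$, but the vertical components $\bigl(a^2,a^3\bigr)\in\mathbb{C}\times\mathbb{R}$ are constrained only by the single real nullity equation $4\operatorname{Re}\bigl(a^1\ol{a^2}\bigr)+2a^0a^3=0$, leaving a two-real-parameter family of null initial directions at $\zeta_0$ lying over the given $\dot x(a)$. Correspondingly there is a two-parameter family of (null) chains through $[\zeta_0]$ whose initial velocity projects to $\mathbb{R}\dot x(a)$ --- matching the two-parameter family of conformal geodesics through $x(a)$ in that direction --- and the entire content of part (2) is that exactly one member of this family projects to the \emph{given} curve $x(t)$. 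Standard ODE theory gives a unique chain for each choice of initial direction on $M$; it does not select the direction, and your null-chain remark about the ``initial acceleration supplied by $\ddot x(a)$'' gestures at the missing selection without supplying it. The paper circumvents this by first constructing the canonical lift with $\eta=\dot x/|\dot x|$ (verified to be a chain by passing to a gauge with $\nabla_{\dot x}\dot x=P(\dot x)=0$ and taking $\zeta$ parallel, with uniqueness from the constancy of $\dot\gamma^0$, $\dot\gamma^1$), and then transporting this single lift to an arbitrary $\eta_0$ via the right $\mathrm{SO}(3)$-action on the frame bundle $\mathcal{P}$, under which $G^{\mathrm{F}}$ is invariant, so that the rotated curve is again a null geodesic over the same $x(t)$ with prescribed third frame vector at $t=a$. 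To salvage your direct approach you would have to prove that the map from the admissible null initial data $\bigl(a^2,a^3\bigr)$ to the second-order jet of the projected curve at $x(a)$ is a bijection onto the set of conformal geodesics through $\bigl(x(a),\dot x(a)\bigr)$; as written, existence and uniqueness of the lift are asserted rather than proved.
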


This theorem asserts that any (null) chain projects to a conformal geodesic, and conversely we can lift any conformal geodesic to a (null) chain. The lift is determined by a choice of the lift of the endpoint $x(a)$, but in fact there is a canonical choice, namely the lift with $\eta_0$ being proportional to the initial velocity $\dot x(a)$. The lift is described as follows:
We fix a metric $g\in[g]$ and a (local) orientation of $\Sigma$. For any regular curve $x(t)$ on $\Sigma$, we have a complex null (co)vector field $\zeta(t)$, unique up to the $\mathrm{U}(1)$-actions, such that $\bigl(\sqrt{2}\operatorname{Re}\zeta, \sqrt{2}\operatorname{Im}\zeta\bigr)$ gives an~oriented orthonormal basis of $\dot x(t)^\perp\subset T_{x(t)}\Sigma$. The projective class $[\zeta(t)]$ is independent of the choice of $g\in[g]$ and we have a canonical lift $\tilde x(t):=(x(t), [\zeta(t)])$ on $M$, which is transverse to the contact distribution. Then, we can prove that $\tilde x(t)$ is a chain if and only if $x(t)$ is a~conformal geodesic (Theorem~\ref{canonical-lift}).

There is an alternative description of the Fefferman space $\bigl(\wh{\mathcal{C}}, g^{\mathrm{F}}\bigr)$ and the canonical lift $\tilde x(t)$ in terms of the unit tangent sphere bundle $S\Sigma\subset T\Sigma$ and the oriented orthonormal frame bundle~$\mathcal{P}$ over $(\Sigma, g)$: Their relations are given by the bundle isomorphisms
\[
\xymatrix@R=10pt{
\wh{\mathcal{C}} \ar[r]^{\sim} \ar[d] & \mathcal{P} \ar[d] & \zeta \longmapsto \bigl(\sqrt{2}\operatorname{Re}\zeta, \sqrt{2}\operatorname{Im}\zeta, \eta\bigr), \\
M \ar[r]^{\sim} & S\Sigma, & \hspace{-3cm} [\zeta]\longmapsto \eta,
}
\]
where we define $\eta:={\rm i}\zeta\times \ol\zeta=2\operatorname{Re}\zeta\times \operatorname{Im}\zeta$ with the cross product with respect to the representative metric $g$.
By using the canonical $1$-form and the principal connection $1$-form
\[
\phi=\begin{pmatrix}
\phi^1 \\ \phi^2 \\ \phi^3
\end{pmatrix}
, \qquad \omega=\begin{pmatrix}
\hphantom{-}0 & \hphantom{-}\omega^3 & -\omega^2 \\
-\omega^3 & \hphantom{-}0 & \hphantom{-}\omega^1 \\
\hphantom{-}\omega^2 & -\omega^1 & \hphantom{-}0
\end{pmatrix}
\]
on $\mathcal{P}$, the Fefferman metric $G^{\mathrm{F}}$ on $\mathcal{P}$ corresponding to $g^{\mathrm{F}}$ is represented as
\[
G^{\mathrm{F}}=2\bigl( \phi^1\cdot\omega^1+ \phi^2\cdot\omega^2+ \phi^3\cdot\omega^3\bigr).
\]
The equivariance of $\phi$, $\omega$ shows that $G^{\mathrm{F}}$ is a right $\mathrm{SO}(3)$-invariant metric. For a regular curve~$x(t)$ on $\Sigma$, the canonical lift $\tilde x(t)$ on $S\Sigma$ is given by the jet lift
\begin{equation}\label{jet-lift}
\tilde x(t)=\biggl(x(t), \frac{\dot x(t)}{|\dot x(t)|}\biggr).
\end{equation}

We can use this lift to derive a variational principle for conformal geodesics in dimension three. Recently, Dunajski--Kry\'nski~\cite{DK} gave a variational characterization of conformal geodesics (in general dimensions) as the critical curves of an integral functional under a restricted class of variations. In this paper, we give another characterization which is specific to the $3$-dimensional~case.

In~\cite{CMMM}, it is proved that chains on CR manifolds coincide with geodesics of a
certain Kropina-type singular Finsler metric obtained by applying Fermat's principle to the Fefferman metric. In our setting, the Kropina metric on $S\Sigma$ is given by
\[
F(\tilde x, \xi):=\frac{G^{\mathrm{F}}(s_* \xi, s_*\xi)}{G^{\mathrm{F}}(K, s_*\xi)}, \qquad \tilde x\in U\subset S\Sigma,\quad \xi\in T_{\tilde x}S\Sigma\setminus\operatorname{Ker}\th,
\]
for each choice of a local section $s\colon U\to \mathcal{P}$. The Kropina metric is not defined along the contact distribution since $K\lrcorner\, G^{\mathrm{F}}$ descends to the contact form $\th$. Using the canonical lift \eqref{jet-lift}, we define a functional $\mathscr{L}$ on the space of regular curves on $\Sigma$ by
\[
\mathscr{L}[x(t)]:=\int_a^b F\bigl(\tilde x(t), \dot{\tilde x}(t)\bigr)\, {\rm d}t
\biggl(=2\int_a^b \omega^3\bigl(s_*\dot{\tilde x}\bigr)\,{\rm d}t\biggr),
\]
where $s$ is a local section of $\mathcal{P}\to S\Sigma$ on a neighborhood of the curve $\tilde x(t)$.
If $x(t)$ is a conformal geodesic, then $\tilde x(t)$ becomes a chain and hence $x(t)$ is a critical curve of $\mathscr{L}$ under the variations fixing the lifts $\tilde x(a)$, $\tilde x(b)$ of the endpoints.
Conversely, if $x(t)$ is a critical curve of $\mathscr{L}$, the Kropina length functional is stationary at $\tilde x(t)$ with respect to the variations given by the lift of variations of $x(t)$. Combined with the fact that the variations of the Kropina length in the vertical directions always vanish (Proposition~\ref{vertical-variation}), this implies that $\tilde x(t)$ becomes a~chain. Consequently, conformal geodesics are characterized as the critical curves of $\mathscr{L}$ under the variations fixing $\tilde x(a)$, $\tilde x(b)$ (Theorem~\ref{variational-characterization}).

The functional $\mathscr{L}$ is closely related to the so-called {\it total torsion functional}, which we denote by~$\mathscr{T}$. For a regular curve $x(t)$, $a\le t\le b$, on $(\Sigma, g)$, we fix an isometry $\dot x(a)^\perp\cong\dot x(b)^\perp$ by choosing oriented orthonormal basis $A=(e_1, e_2, \dot x(a)/|\dot x(a)|)\in \mathcal{P}_{x(a)}$ and $B=\bigl(e'_1, e'_2, \dot x(b)/|\dot x(b)|\bigr)\in \mathcal{P}_{x(b)}$ at the endpoints. Then, $\mathscr{T}[x(t)]\in\mathbb{R}/2\pi\mathbb{Z}$ is defined to be the angle from $\nu(a)$ to $\nu(b)$, where~$\nu(t)$ is a unit normal vector field along $x(t)$ which is parallel with respect to $\nabla^\perp$. The total torsion depends on the choice of $A$, $B$, but it is
conformally invariant once we fix the conformal classes of $A$ and $B$; see Proposition~\ref{conf-invariance-torsion}. If we use $A=s(\tilde x(a)), B=s(\tilde x(b))$, then we have
\[
\mathscr{L}[x(t)]\equiv 2\mathscr{T}[x(t)] \mod 2\pi\mathbb{Z}
\]
(Proposition~\ref{F-T}). Thus, conformal geodesics are also characterized as critical curves of the total torsion functional $\mathscr{T}$.

When the acceleration $\nabla_{\dot x}\dot x$ is nowhere proportional to the velocity $\dot x$, we can define the {\it torsion} $\tau(t)$ of $x(t)$, which is given by the formula
\[
\tau(t)=\frac{\det (\dot x, \nabla_{\dot x}\dot x, \nabla_{\dot x}\nabla_{\dot x}\dot x)}{|\dot x\times \nabla_{\dot x}\dot x|^2},
\]
and the total torsion is represented as the total integral of the torsion up to the addition of a~constant $c\in\mathbb{R}/2\pi\mathbb{Z}$ determined by $s$:
\[
\mathscr{T}[x(t)]\equiv-\int_a^b\tau(t)|\dot x(t)|\,{\rm d}t+c\mod 2\pi\mathbb{Z}.
\]
In summary, we have the following.

\begin{Theorem}\label{conf-geod-total-torsion}
A regular curve $x(t)$, $a\le t\le b$, on $\bigl(\Sigma^3, g\bigr)$ is a conformal geodesic if and only if it is a critical curve of the total torsion functional $\mathscr{T}$ under the variations fixing $(x(a), \dot x(a)/|\dot x(a)|)$ and $(x(b), \dot x(b)/|\dot x(b)|)$ with common $A$, $B$.

When $x(t)$ satisfies $\dot x\times \nabla_{\dot x}\dot x\neq0$, the functional $\mathscr{T} $ can be replaced by the integral functional
\[
\int_a^b \tau(t)|\dot x(t)|\,{\rm d}t=\int_a^b \frac{\det (\dot x, \nabla_{\dot x}\dot x, \nabla_{\dot x}\nabla_{\dot x}\dot x)}{|\dot x\times \nabla_{\dot x}\dot x|^2}|\dot x|\,{\rm d}t.
\]
\end{Theorem}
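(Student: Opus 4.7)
The plan is to deduce Theorem~\ref{conf-geod-total-torsion} by assembling three tools already prepared in the introduction: the variational characterization of conformal geodesics as critical curves of the Kropina functional $\mathscr{L}$ (Theorem~\ref{variational-characterization}), the congruence $\mathscr{L}\equiv 2\mathscr{T}\pmod{2\pi\mathbb{Z}}$ (Proposition~\ref{F-T}), and the representation of $\mathscr{T}$ as $-\!\int_a^b\tau(t)|\dot x(t)|\,{\rm d}t$ modulo an $s$-dependent constant. I expect the whole proof to be essentially a reconciliation of the variational problems for these three functionals under matching endpoint conditions.

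First I would fix $A\in\mathcal{P}_{x(a)}$ and $B\in\mathcal{P}_{x(b)}$ whose third entries are the unit tangents at the endpoints, and choose a local section $s\colon U\to\mathcal{P}$ near the canonical lift $\tilde x(t)$ with $s(\tilde x(a))=A$ and $s(\tilde x(b))=B$. Applied to this section, Proposition~\ref{F-T} reads $\mathscr{L}[x(t)]\equiv 2\mathscr{T}[x(t)]\pmod{2\pi\mathbb{Z}}$. For any smooth one-parameter variation $x_s(t)$ whose canonical lifts of the endpoints are held fixed, the difference $\mathscr{L}[x_s(t)]-2\mathscr{T}[x_s(t)]$ depends continuously on $s$ and is $2\pi\mathbb{Z}$-valued, hence locally constant, so the first variations of $\mathscr{L}$ and $2\mathscr{T}$ coincide. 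Moreover, fixing $(x(a),\dot x(a)/|\dot x(a)|)$ and $(x(b),\dot x(b)/|\dot x(b)|)$ with common $A,B$ corresponds exactly to fixing $\tilde x(a), \tilde x(b)\in S\Sigma$, so Theorem~\ref{variational-characterization} translates into the criticality of $x(t)$ for $\mathscr{T}$ under precisely these variations, yielding the first half of the statement.

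For the second half, under the open condition $\dot x\times\nabla_{\dot x}\dot x\neq0$ the torsion $\tau$ is smooth along admissible variations, and the additive constant in $\mathscr{T}\equiv-\!\int_a^b\tau|\dot x|\,{\rm d}t+c\pmod{2\pi\mathbb{Z}}$ depends only on $s$ and on $A,B$, hence remains fixed throughout the variation. The same locally-constant-difference argument shows that the first variations of $\mathscr{T}$ and of $-\!\int_a^b\tau(t)|\dot x(t)|\,{\rm d}t$ agree, and reversing sign does not alter criticality, giving the desired replacement of the functional. The main obstacle I anticipate is bookkeeping of the admissible variations: verifying that the jet-lift correspondence~\eqref{jet-lift} gives a bijection between variations of $x(t)$ fixing the unit tangents at the endpoints and variations of $\tilde x(t)$ fixing $\tilde x(a), \tilde x(b)$, and that the section $s$ extends smoothly along all such variations so that Proposition~\ref{F-T} applies uniformly. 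Once these compatibilities are in place, the three functionals share the same critical curves under the prescribed endpoint conditions and the theorem follows.
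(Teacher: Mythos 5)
Your proposal is correct and follows essentially the same route as the paper: the first part is obtained by combining Theorem~\ref{variational-characterization} with Proposition~\ref{F-T} (choosing $s$ so that $s(\tilde x(a))=A$, $s(\tilde x(b))=B$), and the second part by the torsion-integral representation of $\mathscr{T}$ up to an endpoint-dependent constant. Your explicit remark that the $2\pi\mathbb{Z}$-valued difference of the functionals is locally constant along smooth variations, so that first variations agree, is a point the paper leaves implicit but is handled exactly as you describe.
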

The variationality of conformal geodesics on $\mathbb{R}^3$ has been proved by
 Barros--Ferr\'andez~\cite{BF1, BF2}, and Theorem~\ref{conf-geod-total-torsion} generalizes their result.

The variation of the total torsion in the curved cases is computed by Chern--Kn\"oppel--Pedit--Pinkall~\cite{CKPP} in a different context, and Theorem~\ref{conf-geod-total-torsion} can also be proved directly from their formula; see Theorem~\ref{variation-T}.

After our work was completed, the author was informed of an independent work of
 Kruglikov--Matveev--Steneker~\cite{KMS}, which also proves Theorem~\ref{conf-geod-total-torsion}. Their proof is different from ours and is based on a detailed analysis of the Euler--Lagrange equation of the total torsion functional. Kruglikov~\cite{K} also proved that conformal geodesics are not variational in dimensions greater than three.

 Finally, we comment on the relation of our results to the general theory of Cartan geometry. Conformal geometry and CR geometry are both examples of so-called {\it parabolic geometry}, that~is, Cartan geometry modeled on a homogeneous space $G/P$ where $G$ is a semi-simple Lie group and $P$ is a parabolic subgroup. Following the pioneering works of Cartan and Tanaka, the general theory of parabolic geometry has been extensively developed; see~\cite{CS} for an exposition of the theory together with historical remarks.
In particular, Fefferman's construction in CR geometry has been generalized within the broader framework of extensions $G\subset \widetilde G$ of symmetry groups \cite{C, CS}. The twistor CR manifold can also be reinterpreted in this setting. In fact, LeBrun's twistor CR structure corresponds to the special case ($p=q=0$) of the twistor construction of CR structure from quaternionic contact structures associated with the extension $\mathrm{Sp}(p+1, q+1)\subset \mathrm{SU}(2p+2, 2q+2)$ described in \cite[Section~4.5.5]{CS}. We also remark that Sato--Yamaguchi~\cite{SY} introduced the twistor CR structure on the tangent sphere bundle over conformal 3-manifolds by a Cartan geometric method independently of LeBrun. Moreover, a general theory of distinguished families of curves in parabolic geometry, including CR chains and conformal geodesics, has been developed by \v{C}ap--Slov\'ak--\v Z\'adn\'ik~\cite{CSZ}. However, the correspondence between distinguished curves in the general setting of Fefferman-type constructions does not yet appear to be fully understood. Developing a general theory in this direction would therefore be of considerable interest, and the present paper may be viewed as a case study illustrating the correspondence between distinguished curves and their variational character.

This paper is organized as follows: In Section~\ref{CR}, we review the notion of CR structure and define (null) chains on CR manifolds via the Fefferman metric. In Section~\ref{conformal-three}, we recall the definition of conformal geodesic and introduce LeBrun's twistor CR manifold over conformal $3$-manifolds. Then, in Section~\ref{Fefferman-twistor-CR}, we give an explicit description of the Fefferman metric for twistor CR manifolds and compute its connection, curvature, and null geodesic equation to prove Theorem~\ref{chain-projection}\,(1). Here, we also prove that the canonical lift of a conformal geodesic becomes a~chain, which gives a partial proof to Theorem~\ref{chain-projection}\,(2); the full proof will be given in Section~\ref{sphere-bundle} after we reinterpret our constructions in the sphere/frame bundle picture. In~Section~\ref{variational-principle}, we give a variational characterization for conformal geodesics in dimension three via the Kropina metric for chains, and discuss its relation to the total torsion functional.

\begin{Notation}
We use tensor indices $\alpha, \beta =1, \dots, n$ for $T^{1, 0}M$, $i, j, k, l, m =1, 2, 3$ for $\Sigma$, and $a, b, c, d =0, 1, 2, \ol1, \ol2, 3$ for $\wh{\mathcal{C}}$. We adopt Einstein's summation convention throughout this paper.
For 1-forms $\varphi$, $\psi$, we define the symmetric product by
\[
(\varphi\cdot\psi)(V, W):=\frac{1}{2} (\varphi(V)\psi(W)+\varphi(W)\psi(V)).
\]
\end{Notation}

\section{CR manifolds and the Fefferman metric}\label{CR}
\subsection{CR structures}
An {\it almost CR structure} on a $(2n+1)$-dimensional $C^\infty$ manifold $M$ is defined to be a rank $n$ complex vector sub-bundle $T^{1, 0}M\subset\mathbb{C}TM$ satisfying $T^{1, 0}M\cap T^{0, 1}M=0$, where \smash{${T^{0, 1}M:=\ol{T^{1, 0}M}}$}. If the space of sections of $T^{1, 0}M$ is closed under the Lie bracket, it is called a {\it CR structure}. Since $T^{1, 0}M$ contains no non-zero real vector, the real part $H:=\operatorname{Re} T^{1, 0}M\subset TM$ becomes a~rank $2n$ real sub-bundle satisfying $\mathbb{C}H=T^{1, 0}M\oplus T^{0,1}M$.

We say a CR structure $T^{1, 0}M$ is {\it non-degenerate} if $H$ is a contact distribution, or equivalently, the {\it Levi form}
\[
T^{1, 0}_xM\times T^{0, 1}_xM\ni\bigl(Z, \ol W\bigr)\longmapsto {\rm i}\bigl[Z, \ol W\bigr]
\in T_xM/H_x
\]
gives a non-degenerate $TM/H$-valued hermitian form on \smash{$T_x^{1, 0}M$} at any point $x\in M$. We note that the right-hand side is well-defined via arbitrary extensions of $Z$, $\ol W$ to vector fields.

We usually assume that the real line bundle $TM/H$ is orientable so that we can trivialize $H^\perp \subset T^*M$ by a global contact form \smash{$\th\in \Gamma\bigl(H^\perp\bigr)$}. For a fixed orientation of $TM/H$, a contact form is determined up to multiplications by positive functions:
\smash{$\th\mapsto \wh\th={\rm e}^{\Upsilon}\th$, $\Upsilon\in C^\infty(M)$}. For each choice of $\th$, we have the real valued Levi form $h_\th$ by trivializing $TM/H$, which transforms conformally as $h_{\wh\th}={\rm e}^\Upsilon h_\th$.

The {\it Reeb vector field} of a contact form $\th$ is a real vector field $T$ uniquely
specified by the conditions $\th(T)=1$, $T\lrcorner\, {\rm d}\th=0$. Choosing a local frame $(Z_\a)$ for $T^{1, 0}M$, we have a local frame \smash{$\bigl(T, Z_\a, Z_{\ol\a}:=\ol{Z_\a}\bigr)$} for $\mathbb{C}TM=\mathbb{C}T\oplus T^{1, 0}M\oplus T^{0,1}M$ called an {\it admissible frame}, and its dual coframe \smash{$\bigl(\th, \th^\a, \th^{\ol\a}=\ol{\th^\a}\bigr)$}. In this coframe, we can write as
\[
{\rm d}\th={\rm i} h_{\a\ol\b}\th^\a\wedge\th^{\ol\b}
\]
and the Levi form is given by $\bigl(Z, \ol W\bigr)\mapsto h_{\a\ol\b}Z^\a \ol{W^\b}$.

\subsection{The Fefferman metric}\label{CR-Fefferman-metric}
The Fefferman metric associated to CR manifolds was first introduced by Fefferman~\cite{F} for the boundary of strictly pseudoconvex domains in $\mathbb{C}^{n+1}$ via an indefinite K\"ahler metric on $\mathbb{C}^{n+1}\times\mathbb{C}^*$ called the ambient metric. Burns--Diederich--Shnider~\cite{BDS} gave a construction via the CR Cartan bundle, and intrinsic characterizations were obtained by Farris~\cite{Fa} and Lee~\cite{Lee}.
Here, we~introduce the Fefferman metric by following Lee's formulation.

Let $\bigl(M, T^{1, 0}M\bigr)$ be a non-degenerate $(2n+1)$-dimensional CR manifold.
The {\it CR canonical bundle} is the complex line bundle over $M$ defined by
\[
K_M:=\biggl\{\xi\in\bigwedge\nolimits^{\!n+1}(\mathbb{C}T^*M)\, \Big|\, \ol Z\lrcorner\, \xi=0\ \textrm{for any}\ \ol Z\in T^{0,1}M\biggr\}.
\]
We note that a choice of an admissible frame gives a local frame $\th\wedge\th^1\wedge\cdots\wedge\th^n$ for $K_M$. The {\it Fefferman space} is the associated $S^1$-bundle
\[
\mathcal{C}:=K^\times_M/\mathbb{R}_+
\]
over $M$. If we choose a contact form $\th$, the Fefferman space is realized as a sub-bundle of $K^\times_M$~as%
\begin{equation}\label{embed-C}
\mathcal{C}\cong \bigl\{{\rm e}^{{\rm i}s}\th\wedge\th^1\wedge\cdots\wedge\th^n\mid s\in \mathbb{R}/2\pi\mathbb{Z}\bigr\} \subset K^\times_M,
\end{equation}
where $\bigl(\th, \th^\a, \th^{\ol\a}\bigr)$ is an admissible coframe such that $\bigl|\det\bigl(h_{\a\ol\b}\bigr)\bigr|=1$. This realization defines a~tautological $(n+1)$-form
\[
\xi={\rm e}^{{\rm i}s}\th\wedge\th^1\wedge\cdots\wedge\th^n
\]
on $\mathcal{C}$. Also, $\varphi={\rm e}^{{\rm i}s}\th^1\wedge\cdots\wedge\th^n$ becomes a global $n$-form on $\mathcal{C}$ uniquely characterized by the conditions
\[
\xi=\th\wedge\varphi, \qquad V\lrcorner\, \varphi=0\quad \text{for any lift}\ V\ \text{of}\ T;
\]
see {\cite[Lemma 3.3]{Lee}}.

We also have a $1$-form $\sigma$ on $\mathcal{C}$ specified by the following proposition.

\begin{Proposition}[{\cite[Proposition 3.4]{Lee}}]\label{sigma-condition}
There exists a unique real $1$-form $\sigma$ on $\mathcal{C}$ satisfying
\begin{align*}
&{\rm d}\xi={\rm i}(n+2)\sigma\wedge\xi, \\
&\sigma\wedge {\rm d}\varphi\wedge\ol\varphi=(\operatorname{Tr} {\rm d}\sigma){\rm i}\sigma\wedge\th\wedge\varphi\wedge\ol\varphi.
\end{align*}
\end{Proposition}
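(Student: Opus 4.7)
The plan is to prove existence and uniqueness in a local admissible coframe and then patch via the intrinsic nature of the equations. Fix a local admissible coframe $(\theta,\theta^\alpha,\theta^{\ol\alpha})$ on $M$ with $|\det(h_{\alpha\ol\beta})|=1$; under the embedding~\eqref{embed-C}, $(\theta,\theta^\alpha,\theta^{\ol\alpha},{\rm d}s)$ is a coframe on $\mathcal{C}$ in which $\xi={\rm e}^{{\rm i}s}\theta\wedge\theta^1\wedge\cdots\wedge\theta^n$ and $\varphi={\rm e}^{{\rm i}s}\theta^1\wedge\cdots\wedge\theta^n$. A candidate real 1-form decomposes as $\sigma=\sigma_\infty\,{\rm d}s+\sigma_0\,\theta+\sigma_\alpha\theta^\alpha+\ol{\sigma_\alpha}\theta^{\ol\alpha}$ with $\sigma_0,\sigma_\infty\in\mathbb{R}$.

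For the first equation, differentiate $\xi$ to obtain ${\rm d}\xi={\rm i}\,{\rm d}s\wedge\xi-{\rm e}^{{\rm i}s}\theta\wedge {\rm d}(\theta^1\wedge\cdots\wedge\theta^n)$, noting that ${\rm d}\theta\wedge\theta^1\wedge\cdots\wedge\theta^n$ vanishes because ${\rm d}\theta$ is a sum of $\theta^\alpha\wedge\theta^{\ol\beta}$. Since $\theta\wedge\xi=0$, the component $\sigma_0$ is unconstrained; matching the ${\rm d}s$-part yields $\sigma_\infty=1/(n+2)$, and matching the components proportional to $\theta^{\ol\alpha}\wedge\xi$ determines $\ol{\sigma_\alpha}$ (and hence $\sigma_\alpha$ by reality) in terms of the Tanaka--Webster structure equations of $\theta$. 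Consequently the first equation fixes $\sigma$ up to the addition of a real multiple $f\theta$.

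To pin down $f$, substitute $\sigma+f\theta$ into the second equation. Since $(f\theta)\wedge\theta=0$, the right-hand side depends on $f$ only through $\operatorname{Tr}{\rm d}(f\theta)={\rm i}nf$; here one uses ${\rm d}\theta={\rm i}h_{\alpha\ol\beta}\theta^\alpha\wedge\theta^{\ol\beta}$ together with $h^{\alpha\ol\beta}h_{\alpha\ol\beta}=n$ and the fact that ${\rm d}f\wedge\theta$ contributes no $\theta^\alpha\wedge\theta^{\ol\beta}$ component. On the left-hand side, the change is $f\,\theta\wedge{\rm d}\varphi\wedge\ol\varphi$. Both $\theta\wedge{\rm d}\varphi\wedge\ol\varphi$ and $\sigma\wedge\theta\wedge\varphi\wedge\ol\varphi$ are top forms on $\mathcal{C}$, each of which must contain the factor ${\rm d}s$ in its coframe expansion; direct evaluation then produces a scalar equation linear in $f$ with nonzero leading coefficient, uniquely determining $f$. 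Because the defining conditions are formulated intrinsically in terms of the globally defined $\xi$ and $\varphi$, the local $\sigma$'s agree on overlaps and descend to a global $\sigma$ on $\mathcal{C}$.

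The main obstacle is the non-degeneracy check in the final step. Explicitly, one evaluates $\theta\wedge{\rm d}\varphi\wedge\ol\varphi=-{\rm i}\,{\rm d}s\wedge\theta\wedge\varphi\wedge\ol\varphi$ (using that only the ${\rm i}\,{\rm d}s\wedge\varphi$ part of ${\rm d}\varphi$ contributes to a top form) and $\sigma\wedge\theta\wedge\varphi\wedge\ol\varphi=\frac{1}{n+2}\,{\rm d}s\wedge\theta\wedge\varphi\wedge\ol\varphi$, then verifies that the $f$-coefficients from the two sides combine to a nonzero factor. A subsidiary subtlety is to ensure that the resulting a priori complex scalar equation is consistent with $f$ being real; this reality is inherited from the reality of $\sigma$ already built into the first equation.
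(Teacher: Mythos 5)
This proposition is quoted in the paper directly from Lee (\cite[Proposition~3.4]{Lee}) and is not proved there, so there is no internal proof to compare against; your argument is in effect a reconstruction of Lee's original one, and its skeleton is the standard and correct one: the first equation determines the real form $\sigma$ up to adding $f\theta$ with $f$ real (the $ds$-component is forced to be $1/(n+2)$, the $\theta^{\ol\alpha}$-components are read off from $d\xi$, the $\theta^\alpha$-components follow by reality, and $\sigma_0$ is free), and the second equation is affine-linear in $f$ with nonzero slope, since only $\operatorname{Tr}d(f\theta)$ and $f\theta\wedge d\varphi\wedge\ol\varphi$ change. Two points need tightening. First, with the paper's convention $\operatorname{Tr}\omega=h^{\alpha\ol\beta}\omega_{\alpha\ol\beta}$ for $\omega={\rm i}\omega_{\alpha\ol\beta}\theta^\alpha\wedge\theta^{\ol\beta}+\cdots$, one gets $\operatorname{Tr}d(f\theta)=nf$, not ${\rm i}nf$; your extra factor of ${\rm i}$ does not destroy the nonvanishing of the $f$-coefficient (which is $-{\rm i}-\tfrac{{\rm i}n}{n+2}=-\tfrac{2{\rm i}(n+1)}{n+2}$ with the correct normalization), but it would corrupt the reality analysis. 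Second, and more substantively, the reality of the solution $f$ in the existence half is not "inherited automatically": the linear equation for $f$ has a purely imaginary coefficient, so one must actually verify that $\bigl(\sigma\wedge d\varphi\wedge\ol\varphi-(\operatorname{Tr}d\sigma){\rm i}\sigma\wedge\theta\wedge\varphi\wedge\ol\varphi\bigr)/\bigl({\rm d}s\wedge\theta\wedge\varphi\wedge\ol\varphi\bigr)$ is purely imaginary (using that $\operatorname{Tr}d\sigma$ is real for real $\sigma$ and examining the horizontal part of $d\varphi\wedge\ol\varphi$ via the structure equations); you flag this but then dismiss it without an argument. Note also that for the first equation you should say a word about why $d\xi$ lies in the ideal generated by $\xi$ at all, which uses the integrability of the CR structure (no $(0,2)$-part in $d\theta^\alpha$). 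Uniqueness, by contrast, is unaffected by these issues and your argument for it is complete.
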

Here, for a 2-form $\omega$ on $M$ written as \smash{$\omega={\rm i}\omega_{\a\ol\b}\th^\a\wedge\th^{\ol\b}+\cdots$} in an admissible coframe, we define
\smash{$\operatorname{Tr} \omega:=h^{\a\ol\b}\omega_{\a\ol\b}$} using the inverse \smash{$h^{\a\ol\b}$} of the Levi form. The $1$-form $(n+2)\sigma$ gives a~principal connection form on the $S^1$-bundle $\mathcal{C}$.

By using $\sigma$, we now introduce the {\it Fefferman metric} as the pseudo-Riemannian metric on $\mathcal{C}$ defined by
\[
g^{\mathrm{F}}:=2h_{\a\ol\b}\theta^\a\cdot \theta^{\ol\b}+4\theta\cdot \sigma.
\]
We note that our definition of $g^{\mathrm{F}}$ is {\it twice} the definition of Lee. In the coframe $\smash{\bigl(\th^0:=\th, \th^\a, \th^{\ol\a}},\allowbreak \smash{\th^{n+1}:=2\sigma\bigr)}$, the non-zero components of the metric tensor are
\[
g^{\mathrm{F}}_{\a\ol\b}=g^{\mathrm{F}}_{\ol\b\a}=h_{\a\ol\b}, \qquad
g^{\mathrm{F}}_{0\, n+1}=g^{\mathrm{F}}_{n+1\, 0}=1.
\]
If the Levi form has signature $(p, q)$ as a hermitian form, then the Fefferman metric has signature $(2p+1, 2q+1)$. Moreover, if we rescale the contact form
as $\wh\th={\rm e}^\Upsilon\th$, the Fefferman metric transforms as
$\wh{g}^{\,\mathrm{F}}={\rm e}^\Upsilon g^{\mathrm{F}}$. Thus, the conformal structure $\bigl[g^{\mathrm{F}}\bigr]$ is independent of the choice of contact form~\cite[Theorem~5.17]{Lee}.

It follows from the formula in~\cite{Lee} of the connection form of $g^{\mathrm{F}}$ in terms of the pseudo-hermitian structure of $M$ that $g^{\mathrm{F}}$ is conformally flat if and only if $M$ is locally CR equivalent to the hyperquadric
\[
\mathcal{Q}=\bigl\{[\xi]\in \mathbb{CP}^{n+1} \,\big|\, -\bigl|\xi^0\bigr|^2-\bigl|\xi^1\bigr|^2-\cdots-
\bigl|\xi^q\bigr|^2+\bigl|\xi^{q+1}\bigr|^2+\cdots+\bigl|\xi^{n+1}\bigr|^2=0 \bigr\},
\]
which is the flat model of CR manifold.

\subsection{Chains and null chains}
It is well-known that null geodesics of a pseudo-Riemannian metric are conformally invariant up to reparametrizations. Hence, the projections of null geodesics of the Fefferman metric provide an invariant family of curves on CR manifolds. Let $\g(t)$ be a null geodesic of $g^{\mathrm{F}}$. Since $g^{\mathrm{F}}$ is $S^1$-invariant, the infinitesimal generator $K$ of the $S^1$-action on $\mathcal{C}$ is a Killing vector field and hence $g^{\mathrm{F}}(K, \dot\g(t))$ is constant in $t$.

\begin{Definition}[\cite{F, Koch}]
An unparametrized curve on $M$ is called a {\it chain} (resp.\ {\it null chain}) if it is the projection of a non-vertical null geodesic $\g(t)$ of $\bigl[g^{\mathrm{F}}\bigr]$ with $g^{\mathrm{F}}(K, \dot\g)\neq0$ \big(resp.\ $g^{\mathrm{F}}(K, \dot\g)=0$\big).
\end{Definition}

Since $K\lrcorner\, g^{\mathrm{F}}=2\th$, the definition implies that chains are transverse to the contact distribution~$H$ while null chains are tangent to $H$ and null with respect to the Levi form.

For each choice of a direction transverse to $H_x$, there exists a unique chain emanating in that direction. Thus, chains can be considered as `geodesics' in CR geometry. In fact, they are characterized as geodesics of a Kropina metric on CR manifolds, which is a Finsler metric singular along $H$; see~\cite{CMMM} and Section~\ref{Kropina} below. On the other hand, there exists a 1-parameter family of null chains which are tangent to a fixed null vector in $H_x$; see~\cite{Koch}.

\section{Conformal three-manifolds and twistor CR manifolds}\label{conformal-three}

\subsection{Conformal three-manifolds}
Let $(\Sigma, [g])$ be a $3$-dimensional conformal manifold (of Riemannian signature). We take a representative metric $g\in[g]$ and let $R_{ij}{}^k{}_l$ be the curvature tensor of $g$, defined by $(\nabla_i\nabla_j-\nabla_j\nabla_i)X^k=R_{ij}{}^k{}_lX^l$. Since the Weyl curvature always vanishes in three dimensions, we can write as
\begin{equation}\label{curvature-tensor}
R_{ijkl}=P_{ik}g_{jl}-P_{jk}g_{il}+P_{jl}g_{ik}-P_{il}g_{jk}
\end{equation}
with the Schouten tensor
\[
P_{ij}:=R_{ij}-\frac{1}{4}Rg_{ij}.
\]
Here $R_{ij}:=R_{ki}{}^k{}_j$ is the Ricci tensor and $R:=R_k{}^k$ is the scalar curvature.

The Cotton tensor $C=(C_{ijk})\in \Gamma\bigl(T^*\Sigma\otimes \wedge^2 T^*\Sigma\bigr)$ is defined by
\[
C(X, Y, Z):=(\nabla_Z P)(X, Y)-(\nabla_Y P)(X, Z), \qquad \text{i.e.,}\qquad
C_{ijk}=\nabla_k P_{ij}-\nabla_j P_{ik}.
\]
Since $\Sigma$ is 3-dimensional, the Cotton tensor is conformally invariant, and identically vanishes if and only if $(\Sigma, [g])$ is conformally flat.

If $\Sigma$ is oriented, we have the cross product $Z=X\times Y\in T_p\Sigma$ of tangent vectors $X, Y\in T_p\Sigma$. In the index notation, it is given by
\[
Z^i=\e^i{}_{jk}X^j Y^k,
\]
where $\e_{ijk}=\e_{[ijk]}$ is the volume form of $g$:
\[
vol_g=\frac{1}{3!}\e_{ijk}\th^i\wedge \th^j\wedge \th^k.
\]
Applying the Hodge star operator to the Cotton tensor, we define $*C\in\Gamma(T^*\Sigma\otimes T^*\Sigma)$ by
\[
(*C)_{ij}:=\frac{1}{2}\varepsilon_j{}^{kl}C_{ikl},
\]
which is characterized by the equation
\[
C(X, Y, Z)=(*C)(X, Y\times Z).
\]
Since $C_{ijk}$ is trace-free by the contracted second Bianchi identity, $(*C)_{ij}$ becomes a symmetric 2-tensor.

\subsection{Conformal geodesics}
Although geodesics of a Riemannian metric are not conformally invariant, there is an alternative conformally invariant family of curves on conformal manifolds, called {\it conformal geodesics}. They are characterized by the conformally invariant third order ODE
\begin{equation}\label{conf-geod-proj}
\nabla_{\dot x}\nabla_{\dot x}\dot x-\frac{3g(\dot x, \nabla_{\dot x}\dot x)}{|\dot x|^2}\nabla_{\dot x}\dot x+\frac{3|\nabla_{\dot x}\dot x|^2}{2|\dot x|^2}\dot x
+2P(\dot x, \dot x)\dot x-|\dot x|^2P(\dot x)=0.
\end{equation}
Here, we set $P(\dot x, \dot x):=P_{ij}\dot x^i\dot x^j$ and \smash{$P(\dot x)^i:=P^i{}_j \dot x^j$}. We refer the reader to~\cite{BEG} for a derivation of this equation by using the tractor calculus, which is an invariant calculus in conformal geometry.
Conformal geodesics are also called {\it conformal circles} since these curves are circles and straight lines when $g$ is the Euclidean metric.

It is shown in~\cite{BE} that if $x(t)$ is a solution to the equation \eqref{conf-geod-proj}, a reparametrized curve $\tilde x(t)=x(\varphi(t))$ solves \eqref{conf-geod-proj} if and only if $\varphi(t)$ is a projective linear transformation: $\varphi(t)=(at+b)/(ct+d)$. Hence, we call a solution to \eqref{conf-geod-proj}
a {\it conformal geodesic with a projective parameter}. In~\cite{BE}, they also prove that one can define an equation for `unparametrized conformal geodesics' by taking the $\dot x$-orthogonal part of \eqref{conf-geod-proj}.
\begin{Proposition}[{\cite[Proposition 4.2]{BE}}]
A regular curve $x(t)$ can be reparametrized to satisfy~\eqref{conf-geod-proj} if and only if it satisfies the equation
\begin{equation}\label{conf-geod}
\dot x\wedge\biggl(\nabla_{\dot x}\nabla_{\dot x}\dot x-\frac{3g(\dot x, \nabla_{\dot x}\dot x)}{|\dot x|^2}\nabla_{\dot x}\dot x-|\dot x|^2P(\dot x)\biggr)=0.
\end{equation}
\end{Proposition}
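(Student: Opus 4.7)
My plan is to prove the equivalence by combining a one-line wedging observation (for the forward direction) with a reparametrization identity and a standard ODE existence argument (for the converse). Write $E(x)$ for the left-hand side of \eqref{conf-geod-proj}. For the forward direction, I take the exterior product $\dot x\wedge E(x)=0$: the two summands of $E(x)$ of the form (scalar)$\cdot\dot x$, namely $\tfrac{3|\nabla_{\dot x}\dot x|^2}{2|\dot x|^2}\dot x$ and $2P(\dot x,\dot x)\dot x$, are annihilated, and what survives is exactly the bracket in \eqref{conf-geod}.

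For the converse, I will write a reparametrization as $\tilde x(s)=x(\sigma(s))$ with $\rho:=\sigma'>0$, and using $D/ds=\rho\,D/dt$ along the curve compute
\[
\nabla_{\dot{\tilde x}}\dot{\tilde x}=\rho'\dot x+\rho^2\nabla_{\dot x}\dot x,\qquad
\nabla_{\dot{\tilde x}}\nabla_{\dot{\tilde x}}\dot{\tilde x}=\rho''\dot x+3\rho\rho'\nabla_{\dot x}\dot x+\rho^3\nabla_{\dot x}\nabla_{\dot x}\dot x.
\]
Substituting these into $E(\tilde x)$, I expect the $\rho\rho'$-terms along $\nabla_{\dot x}\dot x$ coming from $\nabla_{\dot{\tilde x}}\nabla_{\dot{\tilde x}}\dot{\tilde x}$ and from $-3g(\dot{\tilde x},\nabla_{\dot{\tilde x}}\dot{\tilde x})|\dot{\tilde x}|^{-2}\nabla_{\dot{\tilde x}}\dot{\tilde x}$ to cancel, and similarly the cross terms along $\dot x$ coming from Terms~2 and~3 of \eqref{conf-geod-proj} to cancel, producing the clean identity
\[
E(\tilde x)=\rho^3\, E(x)+\biggl(\rho''-\frac{3(\rho')^2}{2\rho}\biggr)\dot x.
\]
This identity is the technical heart of the proof, and verifying it---while keeping careful track of $\rho\rho'$ cross terms across all five summands of \eqref{conf-geod-proj}---is the main obstacle. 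Wedging the identity with $\dot{\tilde x}=\rho\dot x$ also immediately gives the reparametrization invariance of condition \eqref{conf-geod}.

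With the identity in hand, the rest is routine. By hypothesis $\dot x\wedge E(x)=0$, so there is a scalar $h(t)$ with $E(x)=h(t)\dot x$; plugging into the displayed identity, $E(\tilde x)=0$ becomes the scalar equation
\[
\rho''-\frac{3(\rho')^2}{2\rho}+\rho^3 h(\sigma(s))=0,\qquad \sigma'=\rho.
\]
On the open set $\rho>0$ this is a regular second-order ODE system for $(\sigma,\rho)$, so Picard--Lindel\"of gives a local solution for any admissible initial data, and the resulting $\tilde x(s)$ satisfies \eqref{conf-geod-proj}.
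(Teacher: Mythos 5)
The paper offers no proof of this proposition---it is imported verbatim from Bailey--Eastwood \cite{BE} (their Proposition 4.2)---so there is no internal argument to compare against; your proof supplies the missing details and is correct, following what is essentially the standard (and \cite{BE}'s own) route. Writing $E(x)$ for the left-hand side of \eqref{conf-geod-proj}, I checked your key identity: from $\dot{\tilde x}=\rho\dot x$, $\nabla_{\dot{\tilde x}}\dot{\tilde x}=\rho'\dot x+\rho^{2}\nabla_{\dot x}\dot x$ and $\nabla_{\dot{\tilde x}}\nabla_{\dot{\tilde x}}\dot{\tilde x}=\rho''\dot x+3\rho\rho'\nabla_{\dot x}\dot x+\rho^{3}\nabla_{\dot x}\nabla_{\dot x}\dot x$, the $3\rho\rho'\nabla_{\dot x}\dot x$ contributions of the first and second summands of $E(\tilde x)$ cancel, the contributions $\mp 3\rho\rho'\,g(\dot x,\nabla_{\dot x}\dot x)|\dot x|^{-2}\dot x$ of the second and third summands cancel, and the remaining $\rho$-terms combine to $\rho''-3(\rho')^{2}/\rho+3(\rho')^{2}/(2\rho)=\rho''-3(\rho')^{2}/(2\rho)$, giving $E(\tilde x)=\rho^{3}E(x)+\bigl(\rho''-3(\rho')^{2}/(2\rho)\bigr)\dot x$ exactly as you claim; wedging with $\dot{\tilde x}$ then yields the reparametrization invariance of \eqref{conf-geod}, which you rightly invoke to transfer the conclusion of the forward direction from $\tilde x$ back to $x$. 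In the converse, passing from $\dot x\wedge E(x)=0$ to $E(x)=h\dot x$ uses regularity ($\dot x\neq0$), with $h=g(E(x),\dot x)/|\dot x|^{2}$ smooth, and the Picard--Lindel\"of step is sound on the region $\rho>0$. The one caveat is that this only produces a projective parametrization \emph{locally}: substituting $\rho=u^{2}$, viewed as a function of $t$, linearizes your ODE to $\ddot u+\tfrac{1}{2}h(t)u=0$, whose solutions exist on all of $[a,b]$ but may vanish, at which point the projective parameter escapes to infinity (already for a full round circle in flat space no single projective parameter covers the whole curve). So the proposition must be read as a local statement about the curve, which is how \cite{BE} intend it; with that reading your proof is complete.
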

In this paper, we always consider conformal geodesics as unparametrized curves, and we say a~regular curve $x(t)$ is a {\it conformal geodesic} if it satisfies \eqref{conf-geod}.
We note that if $x(t)$ has a constant speed $|\dot x|$, the equation \eqref{conf-geod} is reduced to
\begin{equation}\label{conf-geod-const-speed}
\dot x\wedge\bigl(\nabla_{\dot x}\nabla_{\dot x}\dot x-|\dot x|^2P(\dot x)\bigr)=0
\end{equation}
since $g(\dot x, \nabla_{\dot x}\dot x)=0$.

Conformal geodesics are also characterized as follows.

\begin{Proposition}[{\cite[Proposition 3.3]{BE}}]\label{conf-geod-characterization}
A regular curve $x(t)$ is a conformal geodesic if and only if there exist a metric $g\in[g]$ and a reparametrization such that
\[
\nabla_{\dot x}\dot x=P(\dot x)=0.
\]
\end{Proposition}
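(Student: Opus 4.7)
The plan is to prove both implications by exploiting the conformal transformation laws of the Levi-Civita connection and the Schouten tensor, namely
\[
\wh\nabla_X Y=\nabla_X Y+{\rm d}\Upsilon(X)Y+{\rm d}\Upsilon(Y)X-g(X, Y)\,\mathrm{grad}\,\Upsilon,
\]
\[
\wh{P}_{ij}=P_{ij}-\nabla_i\nabla_j\Upsilon+\nabla_i\Upsilon\,\nabla_j\Upsilon-\tfrac12|{\rm d}\Upsilon|^2 g_{ij},
\]
when $\wh{g}={\rm e}^{2\Upsilon}g$. The easy direction ($\Leftarrow$) is essentially a check: if a representative metric $\wh{g}\in[g]$ and a reparametrization can be chosen so that $\wh\nabla_{\dot x}\dot x=0$ and $\wh{P}(\dot x)=0$, then the conformally invariant expression on the left-hand side of~\eqref{conf-geod-proj} is computed in the metric $\wh{g}$: every term vanishes identically, so the reparametrized curve satisfies~\eqref{conf-geod-proj}, and in particular the unparametrized curve satisfies~\eqref{conf-geod}.

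For the hard direction ($\Rightarrow$), I would fix an arbitrary $g\in[g]$ and seek $\Upsilon$ together with a reparametrization $s=s(t)$ such that in $\wh{g}={\rm e}^{2\Upsilon}g$ the reparametrized curve is an affine geodesic with $\wh{P}(\dot x)=0$. Because the conditions are imposed only along $x(t)$, it suffices to prescribe the $1$-jet and the normal part of the $2$-jet of $\Upsilon$ along the curve and then extend arbitrarily to a neighborhood. First, the equation $\wh\nabla_{\dot x}\dot x=\lambda\dot x$ (where $\lambda$ is absorbed by the reparametrization) decomposes orthogonally: its $\dot x$-orthogonal part forces
\[
(\mathrm{grad}\,\Upsilon)^\perp=\frac{(\nabla_{\dot x}\dot x)^\perp}{|\dot x|^2}
\]
along $x(t)$, while the tangential component of ${\rm d}\Upsilon$ can then be chosen to exactly kill $\lambda$ after reparametrization. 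This determines ${\rm d}\Upsilon|_{x(t)}$ completely. Next, contracting the Schouten transformation law with $\dot x$, the condition $\wh{P}(\dot x)=0$ becomes an algebraic/first-order ODE for $\nabla_{\dot x}(\mathrm{grad}\,\Upsilon)$ along the curve, whose transverse components can be freely prescribed by Borel-type extension; the tangential component is forced.

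The main obstacle will be verifying that the resulting prescriptions are consistent, i.e., that the tangential component determined by the Schouten condition agrees with what is obtained by differentiating the previously imposed $\mathrm{grad}\,\Upsilon$ equation along $\dot x$. A direct but careful computation should show that this compatibility condition is exactly the conformal geodesic equation~\eqref{conf-geod-proj}, which holds by hypothesis. Thus the required $\Upsilon$ and reparametrization exist, completing the proof. The bookkeeping of indices in matching the Schouten transformation against the third derivative $\nabla_{\dot x}\nabla_{\dot x}\dot x$ appearing in~\eqref{conf-geod-proj} is where the calculation is most delicate; all other steps are either algebraic or standard ODE/extension arguments.
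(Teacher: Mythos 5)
The paper offers no proof of this proposition: it is quoted from \cite[Proposition~3.3]{BE}, so there is no in-paper argument to compare against, and your proposal must stand on its own. Your ($\Leftarrow$) direction is fine, and your overall strategy for ($\Rightarrow$) (prescribe the $2$-jet of $\Upsilon$ along the curve, then check compatibility with the conformal geodesic equation) is the right one. However, the bookkeeping of which data are free and which are forced is wrong in a way that would derail the ``careful computation'' you defer. The central error is the claim that the transverse components of $\nabla_{\dot x}(\mathrm{grad}\,\Upsilon)$ can be freely prescribed: by the symmetry of the Hessian, $\dot x^j\nabla_i\nabla_j\Upsilon=\dot x^j\nabla_j\nabla_i\Upsilon$ is just the covariant derivative along the curve of the restriction of ${\rm d}\Upsilon$ to the curve, so once ${\rm d}\Upsilon|_{x(t)}$ is fixed, \emph{all} of $\nabla_{\dot x}\mathrm{grad}\,\Upsilon$ is determined; the genuinely free second-order data $\nabla_V\nabla_W\Upsilon$ with $V,W\perp\dot x$ never enter $\wh P(\dot x)$. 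Hence, after you spend the normal part of ${\rm d}\Upsilon$ on killing $(\nabla_{\dot x}\dot x)^\perp$ and the tangential part $f:={\rm d}\Upsilon(\dot x)$ on killing the tangential acceleration, $\wh P(\dot x)=0$ is three scalar equations with no freedom left, and all three must be verified, not only the tangential one.

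The two normal equations do reduce, independently of $f$, to \eqref{conf-geod}, so they hold by hypothesis; that part of your plan survives. But the tangential equation $\wh P(\dot x,\dot x)=0$ reduces, with your choice of $f$, to the $\dot x$-contraction of \eqref{conf-geod-proj}, namely $g(\dot x,\nabla_{\dot x}\nabla_{\dot x}\dot x)-3g(\dot x,\nabla_{\dot x}\dot x)^2/|\dot x|^2+\frac{3}{2}|\nabla_{\dot x}\dot x|^2+|\dot x|^2P(\dot x,\dot x)=0$, and this is \emph{not} implied by the unparametrized equation \eqref{conf-geod}: for a straight line $x(t)=\varphi(t)v$ in flat $\mathbb{R}^3$ your prescription yields $\wh P(\dot x,\dot x)=\dddot\varphi/\dot\varphi-\frac{3}{2}(\ddot\varphi/\dot\varphi)^2$, the Schwarzian of $\varphi$, which vanishes only for projective parameters. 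You assert that \eqref{conf-geod-proj} ``holds by hypothesis'', but the hypothesis only gives \eqref{conf-geod}. To close the argument you must either (a) first reparametrize to a projective parameter, i.e.\ invoke that a solution of \eqref{conf-geod} can be reparametrized to solve \eqref{conf-geod-proj} (this is \cite[Proposition~4.2]{BE}, quoted in the paper), after which your choice of $f$ does make $\wh P(\dot x,\dot x)=0$ an identity; or (b) kill the tangential acceleration by the reparametrization alone and reserve $f$ for solving the first-order Riccati-type ODE $\wh P(\dot x,\dot x)=0$ along the curve. Your sketch does neither cleanly --- it spends the tangential freedom twice (``absorbed by the reparametrization'' and ``chosen to exactly kill $\lambda$'') and locates the consistency check in the wrong component.
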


\subsection{Twistor CR manifolds}
We will define the twistor CR manifold $M$ over $\bigl(\Sigma^3, [g]\bigr)$, which was introduced by LeBrun~\cite{LeB} as a $3$-dimensional analogue of Penrose's twistor space for anti-self dual conformal $4$-manifolds.

We extend a representative metric $g\in[g]$ on $T^*\Sigma$ to a complex bi-linear form on $\mathbb{C}T^*\Sigma$ and define a $7$-dimensional manifold $\hat M$ as the set of complex null covectors:
\[
\hat M:=\{\zeta\in\mathbb{C}T^*\Sigma\mid g(\zeta, \zeta)=0,\, \zeta\neq0\}.
\]
By taking the quotient by the $\mathbb{C}^*$-action, we obtain a $5$-dimensional manifold
\[
M:=\hat M/\mathbb{C}^*\subset \mathbb{P}(\mathbb{C}T^*\Sigma).
\]
We denote the natural projections by $\tilde\pi\colon \hat M\to M$, $\pi\colon M\to \Sigma$. Note that the fiber of $\pi$ is biholomorphic to \smash{$\bigl\{[\zeta]\in\mathbb{CP}^2\mid \zeta_1^2+\zeta_2^2+\zeta_3^2=0\bigr\}\cong \mathbb{CP}^1$}.

Let
\smash{$\alpha:=\bigl(\zeta_i {\rm d}x^i\bigr)|_{T\hat M}$} be the canonical (tautological) $1$-form on
$\hat M$ and set $\omega:={\rm d}\alpha$. Then, the distribution
\[
 D:=\textrm{Ker}\, \omega=\bigl\{v\in \mathbb{C}T\hat M\mid v\lrcorner\, \omega=0\bigr\}
\]
is $\mathbb{C}^*$-invariant and we can define the distribution $\tilde\pi_* D\subset \mathbb{C}TM$. LeBrun~\cite{LeB} proved that
\[
T^{1, 0}M:=\ol{\tilde\pi_* D}
\]
becomes a non-degenerate CR structure on $M$ whose Levi form has signature $(1, 1)$. The construction depends only on the conformal class $[g]$, and $\bigl(M, T^{1, 0}M\bigr)$ is called the {\it twistor CR manifold} over $(\Sigma, [g])$.

The CR structure $T^{1, 0}M$ can be described in terms of coordinates as follows.
Fix a point $x_0\in \Sigma$ and let $(x^i)$ be normal coordinates centered at $x_0$ with respect to a representative metric $g\in[g]$.
\begin{Proposition}\label{D}
For any point $[\zeta]\in \pi^{-1}(x_0)$, we have
\[
\ol D_{\zeta}=\mathbb{C}\ol{\zeta^i}\frac{\partial}{\partial x^i}\oplus \biggl\{ w_i\frac{\partial}{\partial \zeta_i} \, \bigg|\, \zeta_i w^i=0\biggr\},
\]
and the projection \smash{$\tilde\pi_*\colon \ol D_{\zeta}\to T^{1. 0}_{[\zeta]} M$} induces the isomorphism
\[
T^{1, 0}_{[\zeta]} M\cong\mathbb{C}\ol{\zeta^i}\frac{\partial}{\partial x^i}\oplus \biggl(\biggl\{ w_i\frac{\partial}{\partial \zeta_i}\, \bigg|\, \zeta_i w^i=0\biggr\}\Big/\mathbb{C}\zeta_i\frac{\partial}{\partial \zeta_i}\biggr).
\]
\end{Proposition}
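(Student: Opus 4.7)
The plan is to reduce the intrinsic definition $D=\textrm{Ker}\,\omega$ to a linear algebra computation in the ambient space $\mathbb{C}T^{*}\Sigma$, using normal coordinates at $x_0$ so that derivatives of $g^{ij}(x)$ vanish at the base point. First I would parameterize an arbitrary $v\in\mathbb{C}T_{\zeta}(\mathbb{C}T^{*}\Sigma)$ in the coordinate basis as
\[
v=X^{i}\frac{\pa}{\pa x^{i}}+w_{i}\frac{\pa}{\pa \zeta_{i}}+\tilde w_{i}\frac{\pa}{\pa \ol\zeta_{i}}.
\]
Since $\hat M$ is cut out of $\mathbb{C}T^{*}\Sigma$ by the complex function $f(x,\zeta)=g^{ij}(x)\zeta_{i}\zeta_{j}$, the tangency conditions $vf=v\ol f=0$ at $\zeta$ over $x_0$ reduce, after invoking $\pa_{k}g^{ij}(x_0)=0$ and $g^{ij}(x_0)=\d^{ij}$, to the pair of scalar equations $\zeta^{i}w_{i}=0$ and $\ol\zeta^{i}\tilde w_{i}=0$.

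Next I would write $\omega={\rm d}\a={\rm d}\zeta_{i}\wedge {\rm d}x^{i}$ on the whole ambient space, so that for any second vector $u=Y^{i}(\pa/\pa x^{i})+u_{j}(\pa/\pa \zeta_{j})+\tilde u_{j}(\pa/\pa \ol\zeta_{j})\in\mathbb{C}T_{\zeta}\hat M$ one has $\omega(v,u)=w_{i}Y^{i}-X^{i}u_{i}$. Demanding $\omega(v,u)=0$ for every admissible $u$ is then a routine variation: freely varying $Y^{i}$ forces $w_{i}=0$; the absence of ${\rm d}\ol\zeta_{i}$ in $\omega$ imposes no condition on $\tilde u$; and varying $u_{i}$ inside the hyperplane $\zeta^{i}u_{i}=0$ forces $X^{i}$ to lie in the line $\mathbb{C}\zeta^{i}$. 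Taking the complex conjugate of the resulting $D_{\zeta}$ then yields precisely the formula for $\ol D_{\zeta}$ in the statement.

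For the quotient isomorphism, I would identify $\textrm{Ker}\,\tilde\pi_{*}\cap\mathbb{C}T_{\zeta}\hat M$ by differentiating the $\mathbb{C}^{*}$-action $\zeta\mapsto\lambda\zeta$; its complexification is spanned over $\mathbb{C}$ by $\zeta_{i}(\pa/\pa\zeta_{i})$ and $\ol\zeta_{i}(\pa/\pa\ol\zeta_{i})$. Intersecting with $\ol D_{\zeta}$, only $\mathbb{C}\zeta_{i}(\pa/\pa\zeta_{i})$ survives, and it does sit inside the vertical part of $\ol D_{\zeta}$ precisely because $\zeta_{i}\zeta^{i}=g(\zeta,\zeta)=0$; this is where the null-cone hypothesis re-enters in an essential way. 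Quotienting $\ol D_{\zeta}$ by this one-dimensional kernel then gives the claimed isomorphism. The main obstacle is careful bookkeeping of the conjugation conventions across $\mathbb{C}T^{*}\Sigma$, $\hat M$ and $M$; the key analytic simplification---vanishing of the Christoffel symbols at $x_0$---is what makes the intermediate formulas collapse to purely algebraic expressions in $\zeta$, and because both sides of the proposition are invariantly defined, the resulting identification is automatically independent of the normal-coordinate choice.
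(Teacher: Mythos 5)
The paper does not prove this proposition in-text; it simply cites \cite{Mar} (Proposition 6.1 there), so there is no internal argument to compare against. Your direct verification is correct and is the expected one: the tangency conditions $\zeta^iw_i=0$, $\overline{\zeta^i}\tilde w_i=0$ in normal coordinates, the formula $\omega(v,u)=w_iY^i-X^iu_i$ forcing $w=0$ and $X^i\in\mathbb{C}\zeta^i$, and the identification of $\overline{D}_\zeta\cap\mathbb{C}\operatorname{Ker}\tilde\pi_*$ with $\mathbb{C}\zeta_i\partial/\partial\zeta_i$ via the null condition $\zeta_i\zeta^i=0$ are all handled properly. Two cosmetic points: ``no condition on $\tilde u$'' should refer to the component $\tilde w$ of $v$, and you should say explicitly that the vertical part of $D_\zeta$ is still cut down by the tangency constraint $\overline{\zeta^i}\tilde w_i=0$ before conjugating --- both are implicit in what you wrote, but worth stating.
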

See \cite[Proposition 6.1]{Mar} for the proof.

\section[The Fefferman metric and (null) chains of twistor CR manifolds]{The Fefferman metric and (null) chains\\ of twistor CR manifolds}\label{Fefferman-twistor-CR}
Let $\pi\colon M\to \Sigma$ be the twistor CR manifold over a conformal $3$-manifold $(\Sigma, [g])$. We will describe the Fefferman metric for $M$ and compute its geometric quantities.

We fix a representative metric $g\in[g]$ and define
\[
\wh{\mathcal{C}}:=\bigl\{\zeta\in \hat M\mid g\bigl(\zeta, \ol\zeta\bigr)=1\bigr\}
=\bigl\{\zeta\in\mathbb{C}T^*\Sigma\mid g(\zeta, \zeta)=0,\, g\bigl(\zeta, \ol\zeta\bigr)=1\bigr\},
\]
which is an $S^1$-bundle over $M$: \smash{$\wh{\mathcal{C}}/\mathrm{U}(1)=M$}.
We show that \smash{$\wh{\mathcal{C}}$} gives a double covering of the Fefferman space $\mathcal{C}$ and construct the Fefferman metric on \smash{$\wh{\mathcal{C}}$} instead of $\mathcal{C}$.

\subsection[An adapted frame on wh{C}]{An adapted frame on $\boldsymbol{\wh{\mathcal{C}}}$}
Let $(\zeta_i)$ be the fiber coordinates for $\zeta_i{\rm d}x^i\in \mathbb{C}T^*\Sigma$ associated to local coordinates $(x^i)$ on $\Sigma$. On~$\wh{\mathcal{C}}$, it holds that
\[
\zeta^i\zeta_i=0, \qquad \zeta^i\ol\zeta_i=1.
\]
By using the metric, we identify $\zeta_i {\rm d}x^i$ with the complex tangent vector
\[
\zeta=\zeta^i\frac{\partial}{\partial x^i}
\]
and regard that $\zeta$ also represents the global section of the vector bundle
$\hat\pi^{-1}(\mathbb{C}T\Sigma)$ over $\wh{\mathcal{C}}$, where \smash{$\hat\pi\colon \wh{\mathcal{C}}\to \Sigma$} is the projection. We (locally) choose an orientation of $\Sigma$ and define a real vector
\[
\eta=\eta^i\frac{\partial}{\partial x^i}\in \hat\pi^{-1}(T\Sigma)
\]
by
\[
\eta:={\rm i}\zeta\times \ol\zeta=2\operatorname{Re}\zeta\times \operatorname{Im}\zeta.
\]
Using the volume form $\e_{ijk}$ of $g$, we can write as
\[
\eta^j={\rm i}\e^{jkl}\zeta_k\ol{\zeta_l}.
\]
The property of the cross product gives
\begin{alignat*}{3}
&g(\eta, \eta)=1, \qquad&& g(\eta, \zeta)=g\bigl(\eta, \ol\zeta\bigr)=0,& \\
&\eta\times\ol\zeta={\rm i}\ol\zeta, \qquad&& \zeta\times\eta={\rm i}\zeta.&
\end{alignat*}
(In the index notation, one can use the identity $\e_{ijk}\e^i{}_{lm}=g_{jl}g_{km}-g_{kl}g_{jm}$ to verify these equations.)

Since the conditions $g(\zeta, \zeta)=0$, $g\bigl(\zeta, \ol\zeta\bigr)=1$ are preserved by the parallel transport of $\zeta$, we can define the horizontal lifts of tangent vectors on $\Sigma$ to \smash{$\wh{\mathcal{C}}$}. Let \smash{$\Gamma_{ij}{}^k$} be the Christoffel symbols of the Levi-Civita connection of $g$. Then, the horizontal lift of the tangent vector $\partial/\partial x^i$ is given~by
\[
\frac{\d}{\d x^i}:=\frac{\partial}{\partial x^i}+\Gamma_{ij}{}^k\zeta_k\frac{\partial}{\partial \zeta_j}+\Gamma_{ij}{}^k\ol{\zeta_k}\frac{\partial}{\partial \ol{\zeta}_j}\in T\wh{\mathcal{C}}.
\]
Note that this vector transforms tensorially in the index $i$ under coordinate changes. The annihilators of the horizontal distribution on $\wh{\mathcal{C}}$ are given by
\[
\d \zeta_i:={\rm d}\zeta_i-\Gamma_{ki}{}^j \zeta_j {\rm d}x^k, \qquad \d \ol{\zeta}_i:={\rm d}\ol{\zeta}_i-\Gamma_{ki}{}^j \ol{\zeta}_j {\rm d}x^k,
\]
which also transform tensorially.

Using these notations, we define a frame
\smash{$\bigl(\tilde T, \tilde Z_1, \tilde Z_2, \tilde Z_{\ol1}, \tilde Z_{\ol2}, K\bigr)$}
 for $\mathbb{C}T\wh{\mathcal{C}}$ and its dual coframe \smash{$\bigl(\th, \tilde\th^1, \tilde\th^2, \tilde\th^{\ol1}, \tilde\th^{\ol2}, \tilde\th^3\bigr)$} by
\[
\tilde T:=\eta^i \frac{\d}{\d x^i}, \qquad \tilde Z_1:=\ol{\zeta^i}\frac{\d}{\d x^i}, \qquad \tilde Z_2:=-{\rm i}\eta_j\frac{\partial}{\partial \zeta_j}, \qquad
K:={\rm i}\biggl(\zeta_j\frac{\partial}{\partial \zeta_j}
-\ol{\zeta}_j\frac{\partial}{\partial \ol{\zeta}_j}\biggr)
\]
and
\[
\th:=\eta_i {\rm d}x^i, \qquad \tilde\th^1:=\zeta_i {\rm d}x^i, \qquad \tilde\th^2:={\rm i}\eta^j\d\zeta_j, \qquad \tilde\th^3:=-\frac{{\rm i}}{2}\bigl(\ol{\zeta^j}\d\zeta_j-\zeta^j\d\ol{\zeta}_j\bigr)=-i\ol{\zeta^j}\d\zeta_j.
\]
Note that these vectors and forms are independent of the choice of coordinates and globally defined if $\Sigma$ is globally oriented. We also note that $K$ agrees with the infinitesimal generator of the $S^1$-action on $\wh{\mathcal{C}}$.

We shall prove that this indeed gives a (co)frame which is `adapted' to the CR structure on~$M$.

\begin{Proposition}\quad
\begin{itemize}\itemsep=0pt
\item[$(1)$] The $1$-from $\th$ descends to a contact form on $M$.
\item[$(2)$] The $1$-forms $\tilde\th^1$, $\tilde\th^2$ are horizontal with respect to $\wh{\mathcal{C}}\to M$, and for any local section ${s\colon M\to \wh{\mathcal{C}}}$, the set of $1$-forms $\bigl(\th, \th^1=s^*\tilde\th^1, \th^2=s^*\tilde\th^2, \th^{\ol1}=\ol{\th^1}, \th^{\ol2}=\ol{\th^2}\bigr)$ gives an admissible coframe on $M$.
\end{itemize}
\end{Proposition}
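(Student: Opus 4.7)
The plan is a direct pointwise verification using the explicit formulas together with Proposition~\ref{D}.

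For part~(1), the $\mathrm{U}(1)$-invariance of $\theta$ is immediate from the invariance of $\eta = {\rm i}\zeta\times\ol\zeta$ under $\zeta\mapsto {\rm e}^{{\rm i}s}\zeta$, $\ol\zeta\mapsto {\rm e}^{-{\rm i}s}\ol\zeta$, and $\theta(K) = \eta_i\,{\rm d}x^i(K) = 0$ because $K$ has no $\partial/\partial x^i$-component. Hence $\theta$ descends to a well-defined $1$-form on $M$, nowhere vanishing because $\eta$ is a unit vector. To conclude that it is a contact form, I would argue that $\ker\theta = H$ and then invoke LeBrun's result recalled in Section~\ref{conformal-three} that $H$ is a contact distribution; the inclusion $H\subset\ker\theta$ will drop out once we check in part~(2) that the frame vectors $Z_\alpha, Z_{\ol\alpha}$ spanning $T^{1,0}M\oplus T^{0,1}M=\mathbb{C}H$ are annihilated by $\theta$, and equality holds for dimension reasons.

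For part~(2), horizontality is a short computation: $\tilde\theta^1(K)=\zeta_i\,{\rm d}x^i(K)=0$ since ${\rm d}x^i(K)=0$, while $\delta\zeta_j(K)={\rm d}\zeta_j(K)={\rm i}\zeta_j$ gives $\tilde\theta^2(K)={\rm i}\eta^j\cdot{\rm i}\zeta_j=-\eta^j\zeta_j=0$ via $g(\eta,\zeta)=0$. Hence each $\tilde\theta^\alpha$ pulls back along any local section $s$ to a well-defined $1$-form $\theta^\alpha=s^*\tilde\theta^\alpha$ on $M$. To exhibit an admissible frame dual to $(\theta,\theta^1,\theta^2,\theta^{\ol1},\theta^{\ol2})$, I would work pointwise in normal coordinates at an arbitrary $x_0\in\Sigma$, where $\Gamma_{ij}{}^k(x_0)=0$ so that $\delta/\delta x^i=\partial/\partial x^i$ and $\delta\zeta_i={\rm d}\zeta_i$ at the fiber above $x_0$. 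Setting
\[
T=\eta^i\frac{\partial}{\partial x^i},\qquad Z_1=\ol\zeta^i\frac{\partial}{\partial x^i},\qquad Z_2=-{\rm i}\eta_j\frac{\partial}{\partial\zeta_j}
\]
(the $\tilde\pi_*$-images of $\tilde T,\tilde Z_1,\tilde Z_2$), Proposition~\ref{D} identifies $Z_1$ with the first summand of $T^{1,0}_{[\zeta_0]}M$, and $Z_2$ with a nonzero class in the second summand since $\zeta^j\eta_j=g(\zeta,\eta)=0$ while $\eta$ is not proportional to $\zeta$ (otherwise $g(\eta,\eta)=0$, contradicting $|\eta|=1$). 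Together with their conjugates and $T$, this is the required admissible frame, and the duality relations reduce, via horizontality, to one-line pairings of the $\tilde\theta^a$ with $\tilde T,\tilde Z_\alpha,\tilde Z_{\ol\alpha}$ on $\wh{\mathcal{C}}$, using only the identities $g(\zeta,\zeta)=0$, $g(\zeta,\ol\zeta)=1$, $g(\eta,\eta)=1$, $g(\zeta,\eta)=g(\ol\zeta,\eta)=0$.

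The only technical nuisance is that the connection corrections $\delta\zeta_i-{\rm d}\zeta_i$ and $\delta/\delta x^i-\partial/\partial x^i$ are nontrivial away from $x_0$; this is dispatched by performing the pointwise check in normal coordinates centered at each point of interest, since the adapted (co)frame and the duality relations are manifestly coordinate-invariant.
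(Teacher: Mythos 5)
Your overall strategy coincides with the paper's: descent of $\th$ via $\mathrm{U}(1)$-invariance of $\eta$, horizontality of $\tilde\th^1$, $\tilde\th^2$ from $\tilde\th^\a(K)=0$, identification of the dual frame with $T^{1,0}M$ via Proposition~\ref{D}, and all pointwise checks performed in normal coordinates where $\Gamma_{ij}{}^k(x_0)=0$. Those parts of your argument are correct, including the verification that $Z_2$ represents a nonzero class in the quotient $\bigl\{w_i\partial/\partial\zeta_i\mid \zeta_iw^i=0\bigr\}/\mathbb{C}\zeta_i\partial/\partial\zeta_i$ and the appeal to LeBrun's non-degeneracy for the contact condition.

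There is, however, one genuine omission. The paper's notion of \emph{admissible coframe} is the coframe dual to an admissible frame $\bigl(T, Z_\a, Z_{\ol\a}\bigr)$ in which $T$ is required to be the \emph{Reeb vector field} of $\th$, i.e., $\th(T)=1$ and $T\lrcorner\,{\rm d}\th=0$; this is what guarantees ${\rm d}\th={\rm i}h_{\a\ol\b}\th^\a\wedge\th^{\ol\b}$ with no $\th\wedge\th^\a$ terms, a fact the paper uses immediately afterwards to read off the Levi form. Your proof exhibits $(\th,\th^1,\th^2,\th^{\ol1},\th^{\ol2})$ as dual to $\bigl(T,Z_1,Z_2,Z_{\ol1},Z_{\ol2}\bigr)$ with $T=\eta^i\partial/\partial x^i$, but never verifies that this $T$ is the Reeb field; duality with an arbitrary transversal complement of $\mathbb{C}H$ does not by itself give admissibility. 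The missing check is short but not vacuous: at the center of normal coordinates one uses ${\rm d}g_{ij}(x_0)=0$ and ${\rm d}\e_{ijk}(x_0)=0$ to get ${\rm d}\eta_i(T)=0$ and $\eta^i{\rm d}\eta_i=0$, whence $\th(T)=\eta_i\eta^i=1$ and $T\lrcorner\,{\rm d}\th=T\lrcorner\bigl({\rm d}\eta_i\wedge{\rm d}x^i\bigr)=0$. With this step inserted (it is exactly the "note" in the paper's proof of part~(1)), your argument is complete and essentially identical to the paper's.
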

\begin{proof}
We take normal coordinates $(x^i)$ around a fixed point $x_0\in \Sigma$ and work at a point $p\in\pi^{-1}(x_0)\subset M$.

(1) Since each coefficient $\eta_i$ is a $\mathrm{U}(1)$-invariant function, $\th$ descends to a $1$-form on $M$. Moreover, by Proposition~\ref{D}, $\th$ annihilates $T^{1, 0}M\oplus T^{0, 1}M$ hence is a contact form on $M$. We note that the projection $T$ of $\tilde T$ to $M$ is well-defined since $\tilde T$ is $\mathrm{U}(1)$-invariant, and it gives the Reeb vector field of $\th$. In fact, $T$ is represented as $T=\eta^i\bigl(\partial/\partial x^i\bigr)$ at $p$, and since
${\rm d}\e_{ijk}(x_0)=0$, ${\rm d}g_{ij}(x_0)=0$, we have ${\rm d}\eta_i(T)=0$, $\eta^i{\rm d}\eta_i=0$ at $p$ and hence $\th(T)=\eta_i\eta^i=1$, $T\lrcorner\, {\rm d}\th=T\lrcorner\, \bigl({\rm d}\eta_i\wedge {\rm d}x^i\bigr)=0$.

(2) Since $\tilde\th^1(K)=\tilde\th^2(K)=0$, these forms are horizontal. By Proposition~\ref{D}, we see that~$\th^1$ and~$\th^2$ annihilate $T^{0, 1}M$. Moreover, $\tilde\th^1\bigl(\tilde T\bigr)=\tilde\th^2\bigl(\tilde T\bigr)=0$ implies \smash{$\th^1(T)=\th^2(T)=0$}. Thus, \smash{$\bigl(\th, \th^1, \th^2, \th^{\ol1}, \th^{\ol2}\bigr)$} is an admissible coframe.
\end{proof}

We remark that $\tilde\th^1$ and $\tilde\th^2$ do not descend to $1$-forms on $M$. In fact, if we denote the action of $\lambda\in \mathrm{U}(1)$ on $\wh{\mathcal{C}}$ by $\d_\lambda\colon \wh{\mathcal{C}}\to \wh{\mathcal{C}}$, we have
\[
\d_\lambda^* \tilde\th^1=\lambda \tilde\th^1, \qquad \d_\lambda^*\tilde\th^2=\lambda\tilde\th^2.
\]
Thus, $\bigl(\tilde\th^1, \tilde\th^2\bigr)$ can be considered as a `twisted' or `weighted' coframe for $T^{1, 0}M$.

We also note that if we replace $g$ with $\wh g={\rm e}^{2\Upsilon} g$, $\Upsilon\in C^\infty(\Sigma)$, we have $\wh\eta^{\,i}={\rm e}^{-\Upsilon}\eta^i$, $\wh\eta_i={\rm e}^{\Upsilon}\eta_i$ and hence the associated contact form is rescaled as $\wh\th={\rm e}^{\Upsilon}\th$.


\subsection{The differentials of the adapted coframe}
We will compute the exterior derivatives of the adapted coframe defined above.
We use the following lemma in computations in normal coordinates.
\begin{Lemma}
Let $(x^i)$ be normal coordinates of $g$ centered at a point $x_0\in\Sigma$. Then, we have
\begin{equation}\label{d-eta}
{\rm d}x^j=\eta^j\theta+\ol{\zeta^j}\tilde\theta^1+\zeta^j\tilde\theta^{\ol1}, \qquad
{\rm d}\zeta_j=-{\rm i}\bigl(\eta_j\tilde\theta^2-\zeta_j\tilde\theta^3\bigr), \qquad
{\rm d}\eta^j={\rm i}\bigl(\ol{\zeta^j}\tilde\theta^2-\zeta^j\tilde\theta^{\ol2}\bigr)
\end{equation}
at any point $p\in \pi^{-1}(x_0)\subset M$.
\end{Lemma}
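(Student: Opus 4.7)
The plan is to verify each identity by evaluating both sides on the adapted frame $\bigl(\tilde T, \tilde Z_1, \tilde Z_{\ol1}, \tilde Z_2, \tilde Z_{\ol2}, K\bigr)$ at the point $p$, exploiting the key features of normal coordinates at $x_0$: all Christoffel symbols vanish, so that $\tilde Z_1 = \ol{\zeta^i}\,\pa/\pa x^i$ and $\tilde T = \eta^i\,\pa/\pa x^i$ at $p$, and the horizontal differentials reduce to ordinary ones, $\d\zeta_i = {\rm d}\zeta_i$ and $\d\ol\zeta_i = {\rm d}\ol\zeta_i$ at $p$. Moreover $g_{ij}(x_0)=\delta_{ij}$ with vanishing first derivatives, so $\e^{jkl}$ has vanishing first derivatives at $x_0$.

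For the first identity, both sides are pure ${\rm d}x$-forms and therefore annihilate the vertical vectors $\tilde Z_2, \tilde Z_{\ol2}, K$. On the horizontal frame, pairing ${\rm d}x^j$ with $\tilde T,\tilde Z_1, \tilde Z_{\ol1}$ gives $\eta^j,\ol{\zeta^j},\zeta^j$, and the right-hand side $\eta^j\th+\ol{\zeta^j}\tilde\th^1+\zeta^j\tilde\th^{\ol1}$ reproduces these values via the orthogonality relations $g(\eta,\eta)=1$, $g(\eta,\zeta)=g(\eta,\ol\zeta)=0$, $g(\zeta,\zeta)=0$, $g(\zeta,\ol\zeta)=1$. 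The second identity is again a duality check: ${\rm d}\zeta_j=\d\zeta_j$ vanishes at $p$ on all horizontal vectors and on $\tilde Z_{\ol2}$, while ${\rm d}\zeta_j(\tilde Z_2)=-{\rm i}\eta_j$ and ${\rm d}\zeta_j(K)={\rm i}\zeta_j$. One verifies from $\tilde\th^2(\tilde Z_2)=1$ and $\tilde\th^3(K)=1$ (and the vanishing of the remaining pairings) that the right-hand side $-{\rm i}\bigl(\eta_j\tilde\th^2-\zeta_j\tilde\th^3\bigr)$ reproduces exactly the same values.

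The third identity is the only one requiring a genuine calculation. Differentiating $\eta^j={\rm i}\e^{jkl}\zeta_k\ol\zeta_l$ and using that $\e^{jkl}$ has vanishing first derivatives at $x_0$, the product rule gives ${\rm d}\eta^j={\rm i}\e^{jkl}\bigl(\ol\zeta_l\,{\rm d}\zeta_k+\zeta_k\,{\rm d}\ol\zeta_l\bigr)$ at $p$. Substituting the second identity and its complex conjugate, and using $\ol{\tilde\th^2}=\tilde\th^{\ol2}$ together with the reality of $\tilde\th^3$ (which follows from $\d g(\zeta,\ol\zeta)=0$ on $\wh{\mathcal{C}}$), the two $\tilde\th^3$-contributions cancel by antisymmetry of $\e^{jkl}$. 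The coefficients of $\tilde\th^2$ and $\tilde\th^{\ol2}$ then reduce via the cross-product identities $\eta\times\ol\zeta={\rm i}\ol\zeta$ and $\zeta\times\eta={\rm i}\zeta$, rewritten in index form as $\e^{jkl}\eta_k\ol\zeta_l={\rm i}\,\ol{\zeta^j}$ and $\e^{jkl}\zeta_k\eta_l={\rm i}\,\zeta^j$. The main point requiring care is the sign bookkeeping in this final step, together with the correct handling of the reality/complex-conjugation properties of the $\tilde\th^a$; no further conceptual input is needed.
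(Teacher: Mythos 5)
Your proposal is correct and takes essentially the same route as the paper: the first two identities are duality checks against the adapted frame (equivalently, reading off the coefficients in the dual coframe), and the third follows from differentiating $\eta^j={\rm i}\e^{jkl}\zeta_k\ol{\zeta_l}$ in normal coordinates and applying the cross-product identities $\eta\times\ol\zeta={\rm i}\ol\zeta$, $\zeta\times\eta={\rm i}\zeta$, which is exactly the paper's computation written in index rather than vector notation. One cosmetic remark: the two $\tilde\th^3$-contributions cancel because they are literally negatives of each other, $-\e^{jkl}\ol{\zeta_l}\zeta_k+\e^{jkl}\zeta_k\ol{\zeta_l}=0$, rather than by antisymmetry of $\e^{jkl}$.
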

\begin{proof}
At $p$, we have
\[
{\rm d}x^j\bigl(\tilde T\bigr)=\eta^j, \qquad {\rm d}x^j\bigl(\tilde Z_1\bigr)=\ol{\zeta^j},\qquad {\rm d}x^j\bigl(\tilde Z_2\bigr)={\rm d}x^j(K)=0,
\]
from which we obtain the first equation in \eqref{d-eta}. Similarly, we have
\[
{\rm d}\zeta_j\bigl(\tilde T\bigr)={\rm d}\zeta_j\bigl(\tilde Z_{1}\bigr)={\rm d}\zeta_j\bigl(\tilde Z_{\ol1}\bigr)={\rm d}\zeta_j\bigl(\tilde Z_{\ol2}\bigr)=0, \qquad
{\rm d}\zeta_j\bigl(\tilde Z_2\bigr)=-{\rm i}\eta_j, \qquad
{\rm d}\zeta_j(K)={\rm i}\zeta_j,
\]
and obtain the formula for ${\rm d}\zeta_j$. Finally, using $d\e_{ijk}(x_0)=0$, we compute as
 \[
{\rm d}\eta={\rm i}\,{\rm d}\zeta\times \ol\zeta+{\rm i}\zeta\times {\rm d}\ol\zeta
=\bigl(\eta\tilde\theta^2-\zeta\tilde\theta^3\bigr)\times\ol\zeta-\zeta\times\bigl(\eta\tilde\theta^{\ol2}-\ol\zeta\tilde\theta^3\bigr)={\rm i}\ol\zeta\tilde\theta^2-{\rm i}\zeta\tilde\theta^{\ol2}
\]
at $p$.
\end{proof}

\begin{Proposition} We have
\begin{gather}
{\rm d}\theta={\rm i}\bigl(\tilde\theta^1\wedge\tilde\theta^{\ol2}+\tilde\theta^2\wedge\tilde\theta^{\ol1}\bigr), \nonumber\\
{\rm d}\tilde\theta^1={\rm i}\theta\wedge\tilde\theta^2-{\rm i}\tilde\theta^1\wedge\tilde\theta^3, \nonumber\\
{\rm d}\tilde\theta^2={\rm i}\bigl(P\bigl(\zeta, \ol\zeta\bigr)+P(\eta, \eta)\bigr)\theta\wedge\tilde\theta^1+{\rm i}P(\zeta, \zeta)\theta\wedge\tilde\theta^{\ol1}
- {\rm i}P(\zeta, \eta)\tilde\theta^1\wedge\tilde\theta^{\ol1}-{\rm i}\tilde\theta^2\wedge\tilde\theta^3, \nonumber\\
{\rm d}\tilde\theta^3=- {\rm i}P\bigl(\ol\zeta, \eta\bigr)\theta\wedge\tilde\theta^1+{\rm i}
P(\zeta, \eta)\theta\wedge\tilde\theta^{\ol1}
+ 2{\rm i}P\bigl(\zeta, \ol\zeta\bigr)\tilde\theta^1\wedge\tilde\theta^{\ol1}+{\rm i}\tilde\theta^2\wedge\tilde\theta^{\ol2}.\label{diff-coframe}
\end{gather}
\end{Proposition}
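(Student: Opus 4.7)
All six forms appearing in \eqref{diff-coframe} are globally defined $2$-forms on $\wh{\mathcal{C}}$, so it suffices to verify each identity pointwise. The plan is to fix a point $p \in \pi^{-1}(x_0)$ and choose normal coordinates $(x^i)$ for the representative metric $g$ centered at $x_0$, so that $\Gamma^k_{ij}(x_0)=0$ and ${\rm d}g_{ij}(x_0)=0$. At $p$ one may then freely replace ${\rm d}\zeta_i$ by $\d\zeta_i$, ${\rm d}\ol\zeta_i$ by $\d\ol\zeta_i$, and ${\rm d}\eta_i$ by $g_{ij}{\rm d}\eta^j$. All the ingredients needed for the right-hand sides of \eqref{diff-coframe} are then supplied by the formulas \eqref{d-eta} together with the contraction identities $\eta^i\zeta_i=\eta^i\ol{\zeta_i}=\zeta^i\zeta_i=0$ and $\zeta^i\ol{\zeta_i}=\eta^i\eta_i=1$. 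The first two identities in \eqref{diff-coframe} follow immediately by differentiating $\theta=\eta_i\,{\rm d}x^i$ and $\tilde\theta^1=\zeta_i\,{\rm d}x^i$, substituting from \eqref{d-eta}, and collecting terms.

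For ${\rm d}\tilde\theta^2$ and ${\rm d}\tilde\theta^3$ I would split
\[
{\rm d}\tilde\theta^2 = {\rm i}\,{\rm d}\eta^j\wedge \d\zeta_j + {\rm i}\eta^j\,{\rm d}\d\zeta_j,\qquad {\rm d}\tilde\theta^3 = -{\rm i}\,{\rm d}\ol{\zeta^j}\wedge \d\zeta_j - {\rm i}\ol{\zeta^j}\,{\rm d}\d\zeta_j.
\]
The first summand in each expression is `vertical' in the sense that at $p$ it depends only on \eqref{d-eta}: a short calculation using the contraction identities yields the terms $-{\rm i}\tilde\theta^2\wedge\tilde\theta^3$ in ${\rm d}\tilde\theta^2$ and $+{\rm i}\tilde\theta^2\wedge\tilde\theta^{\ol2}$ in ${\rm d}\tilde\theta^3$. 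The Schouten-tensor contributions must therefore come from ${\rm d}\d\zeta_i$.

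The key pointwise identity is
\[
{\rm d}\d\zeta_i\big|_p = -\frac{1}{2} R_{ml}{}^k{}_i\,\zeta_k\,{\rm d}x^m\wedge {\rm d}x^l,
\]
which follows by expanding ${\rm d}\d\zeta_i = -{\rm d}\Gamma^k_{li}\wedge\zeta_k\,{\rm d}x^l - \Gamma^k_{li}\,{\rm d}\zeta_k\wedge {\rm d}x^l$, using $\Gamma(x_0)=0$, and recognising that in normal coordinates the antisymmetrization $(\partial_{[m}\Gamma^k_{l]i})(x_0)$ equals $\frac{1}{2}R_{ml}{}^k{}_i(x_0)$ by the very definition of the curvature tensor. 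Substituting the three-dimensional decomposition \eqref{curvature-tensor} of $R$ in terms of the Schouten tensor and contracting with $\eta^i$ (resp.\ $\ol{\zeta^i}$) produces four terms which, after the contraction identities are applied, regroup into $-2\theta\wedge P(\zeta)+2\tilde\theta^1\wedge P(\eta)$ (resp.\ $2 P(\zeta)\wedge\tilde\theta^{\ol1}+2\tilde\theta^1\wedge P(\ol\zeta)$), where $P(X):=P_{ij}X^j\,{\rm d}x^i$. Expanding each such $1$-form via ${\rm d}x^i = \eta^i\theta+\ol{\zeta^i}\tilde\theta^1+\zeta^i\tilde\theta^{\ol1}$ and collecting coefficients matches the Schouten-tensor terms in \eqref{diff-coframe} exactly. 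The main obstacle is purely bookkeeping: tracking signs under antisymmetrization and managing the four-way split of each curvature contraction; no deeper idea is needed beyond \eqref{d-eta}, the three-dimensional curvature identity, and the contraction identities above.
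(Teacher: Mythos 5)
Your proposal is correct and follows essentially the same route as the paper: compute pointwise at the center of normal coordinates, use \eqref{d-eta} for the first two identities and the vertical parts of ${\rm d}\tilde\theta^2$, ${\rm d}\tilde\theta^3$, and extract the Schouten terms from the curvature via $\partial_{[m}\Gamma^k_{l]i}(x_0)=\frac12 R_{ml}{}^k{}_i(x_0)$ together with the decomposition \eqref{curvature-tensor} — your identity for ${\rm d}\d\zeta_i\big|_p$ is just the paper's formula for ${\rm d}\bigl(\Gamma_{kjl}\,{\rm d}x^k\bigr)$ contracted with $\zeta$. The only cosmetic difference is that you carry out the ${\rm d}\tilde\theta^3$ case explicitly, which the paper omits as ``similar.''
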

\begin{proof}
We compute in normal coordinates around $x_0\in\Sigma$. By \eqref{d-eta}, we have
\begin{align*}
&{\rm d}\theta={\rm d}\eta_i\wedge {\rm d}x^i={\rm i}\bigl(\tilde\theta^1\wedge\tilde\theta^{\ol2}+\tilde\theta^2\wedge\tilde\theta^{\ol1}\bigr), \\
&{\rm d}\tilde\th^1={\rm d}\zeta_i\wedge {\rm d}x^i={\rm i}\theta\wedge\tilde\theta^2-{\rm i}\tilde\theta^1\wedge\tilde\theta^3.
\end{align*}
To compute ${\rm d}\tilde\theta^2$, we note that
\begin{align*}
{\rm d}\bigl(\Gamma_{kjl} {\rm d}x^k\bigr)&=\frac{1}{2}R_{kmlj}{\rm d}x^k\wedge {\rm d}x^m \\
&=\frac{1}{2}(P_{kl}g_{mj}-P_{ml}g_{kj}+P_{mj}g_{kl}-P_{kj}g_{ml}){\rm d}x^k\wedge {\rm d}x^m \\
&=(P_{kl}g_{mj}+P_{mj}g_{kl}){\rm d}x^k\wedge {\rm d}x^m
\end{align*}
holds at $p\in \pi^{-1}(x_0)$ since $\Gamma_{ij}{}^k(x_0)=0$. Using this equation and \eqref{d-eta}, we have
\begin{align*}
{\rm d}\tilde\theta^2={}&{\rm i}\,{\rm d}\eta^j\wedge {\rm d}\zeta_j-{\rm i}\eta^j\zeta^l{\rm d}\bigl(\Gamma_{kjl} {\rm d}x^k\bigr) \\
={}&{\rm i}\,{\rm d}\eta^j\wedge {\rm d}\zeta_j-{\rm i}\eta^j\zeta^l (P_{kl}g_{mj}+P_{mj}g_{kl}){\rm d}x^k\wedge {\rm d}x^m \\
={}&{\rm i}\bigl(\ol{\zeta^j}\tilde\th^2-\zeta^j\tilde\th^{\ol2}\bigr)\wedge\bigl(\eta_j\tilde\theta^2-\zeta_j\tilde\theta^3\bigr) \\
&{-}\,{\rm i}\eta^j\zeta^l (P_{kl}g_{mj}+P_{mj}g_{kl})\bigl(\eta^k\theta+\ol{\zeta^k}\tilde\theta^1+\zeta^k\tilde\theta^{\ol1}\bigr)\wedge \bigl(\eta^m\theta+\ol{\zeta^m}\tilde\theta^1+\zeta^m\tilde\theta^{\ol1}\bigr) \\
={}&{\rm i}\bigl(P\bigl(\zeta, \ol\zeta\bigr)+P(\eta, \eta)\bigr)\theta\wedge\tilde\theta^1+{\rm i}P(\zeta, \zeta)\theta\wedge\tilde\theta^{\ol1} -{\rm i}P(\zeta, \eta)\tilde\theta^1\wedge\tilde\theta^{\ol1}-{\rm i}\tilde\theta^2\wedge\tilde\theta^3.
\end{align*}
The computation of ${\rm d}\tilde\th^3$ is similar, and we omit it.
\end{proof}

It follows from the first equation in \eqref{diff-coframe} that
\[
{\rm d}\th={\rm i}\bigl(\theta^1\wedge\theta^{\ol2}+\theta^2\wedge\theta^{\ol1}\bigr)
\]
holds for the admissible coframe $\th^1=s^*\tilde \th^1$, $\th^2=s^*\tilde \th^2$ determined by a local section $s\colon M\to \wh{\mathcal{C}}$. Hence the Levi form is given by
\[
\bigl(h_{\a\ol\b}\bigr)=\begin{pmatrix} 0 & 1 \\ 1 & 0 \end{pmatrix},
\]
which indeed has signature $(1, 1)$.

\subsection{The Fefferman space for twistor CR manifolds}
Let $\mathcal{C}=K_M^\times/\mathbb{R}_+$ be the Fefferman space over $M$. For simplicity, we assume that $\Sigma$ is globally oriented, and let $\th=\eta_i{\rm d}x^i$ be the contact form determined by a choice of representative metric $g\in[g]$ as above.
We embed $\mathcal{C}$ into $K^\times_M$ as in \eqref{embed-C} by $\th$, and define the mapping
\[
\varpi\colon \ \wh{\mathcal{C}}\longrightarrow \mathcal{C}, \qquad u\longmapsto \bigl(\theta\wedge\tilde\theta^1\wedge\tilde\theta^2\bigr)_u
\]
by using our twisted coframe $\bigl(\tilde\th^1, \tilde\th^2\bigr)$ for $T^{1, 0}M$. 
Since we have
\[
\d_\lambda^*\bigl(\theta\wedge\tilde\theta^1\wedge\tilde\theta^2\bigr)
=\lambda^2\bigl(\theta\wedge\tilde\theta^1\wedge\tilde\theta^2\bigr)
\]
for the action of $\lambda\in\mathbb{C}^*$, the mapping $\varpi$ becomes a double covering map.

We will compute the pullback $\varpi^*g^{\mathrm{F}}$ of the Fefferman metric, which we also denote by $g^{\mathrm{F}}$ and call the {\it Fefferman metric} on $\wh{\mathcal{C}}$. We set
\[
\hat{\xi}:=\varpi^*\xi=\theta\wedge\tilde\theta^1\wedge\tilde\theta^2,\qquad \hat{\varphi}:=\varpi^*\varphi, \qquad \hat{\sigma}:=\varpi^*\sigma,
\]
where $\xi$, $\varphi$, $\sigma$ are the differential forms defined in Section~\ref{CR-Fefferman-metric}.
\begin{Lemma}
We have $\hat\varphi=\tilde\theta^1\wedge\tilde\theta^2$ and $\hat\sigma=\frac{1}{2}\tilde\th^3$.
\end{Lemma}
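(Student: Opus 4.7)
The plan is to exploit the uniqueness clauses in the definitions of $\varphi$ and $\sigma$: rather than computing $\varpi^*\varphi$ and $\varpi^*\sigma$ directly, I verify on $\wh{\mathcal{C}}$ that $\tilde\theta^1\wedge\tilde\theta^2$ and $\tfrac{1}{2}\tilde\theta^3$ satisfy the pulled-back defining equations.

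For $\hat\varphi=\tilde\theta^1\wedge\tilde\theta^2$: Pulling back $\xi=\theta\wedge\varphi$ and comparing with $\hat\xi=\theta\wedge\tilde\theta^1\wedge\tilde\theta^2$ forces $\hat\varphi-\tilde\theta^1\wedge\tilde\theta^2=\theta\wedge\psi$ for some $1$-form $\psi$. The other defining property of $\varphi$, namely $V\lrcorner\,\varphi=0$ for any lift $V$ of the Reeb field $T$, transfers to $\wh{\mathcal{C}}$; applied to $\tilde T$ (a lift of $T$) and combined with $\tilde\theta^i(\tilde T)=0$, it yields $\tilde T\lrcorner\,(\theta\wedge\psi)=\psi-\psi(\tilde T)\theta=0$, so $\psi$ is a multiple of $\theta$ and hence $\theta\wedge\psi=0$.

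For $\hat\sigma=\tfrac{1}{2}\tilde\theta^3$: First, compute $d\hat\xi$ from the structure equations \eqref{diff-coframe}. The term $d\theta\wedge\tilde\theta^1\wedge\tilde\theta^2$ vanishes by type-counting, and the remaining two terms each contribute $-i\theta\wedge\tilde\theta^1\wedge\tilde\theta^2\wedge\tilde\theta^3$, giving $d\hat\xi=2i\tilde\theta^3\wedge\hat\xi$. Comparing with the pulled-back condition~(a) of Proposition~\ref{sigma-condition}, which reads $d\hat\xi=4i\hat\sigma\wedge\hat\xi$ since $n=2$, we get $(\hat\sigma-\tfrac{1}{2}\tilde\theta^3)\wedge\hat\xi=0$. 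Combined with reality of $\hat\sigma$ and $\tilde\theta^3$, this forces $\hat\sigma=\tfrac{1}{2}\tilde\theta^3+A\theta$ for some real function $A$.

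It remains to deduce $A=0$ from condition~(b). Directly from \eqref{diff-coframe}, the $(1,1)$-part of $d\tilde\theta^3$ is diagonal, involving only $\tilde\theta^1\wedge\tilde\theta^{\ol 1}$ and $\tilde\theta^2\wedge\tilde\theta^{\ol 2}$; together with the off-diagonal Levi form $(h_{\alpha\ol\beta})=\begin{pmatrix} 0 & 1 \\ 1 & 0 \end{pmatrix}$ this makes $\tfrac{1}{2}d\tilde\theta^3$ trace-free, so $\operatorname{Tr}d\hat\sigma$ comes entirely from $A\,d\theta$ and equals $2A$. A short computation of $d\hat\varphi$ and its wedge with $\ol{\hat\varphi}$ shows that both sides of~(b) are scalar multiples of the top form $\Omega=\theta\wedge\tilde\theta^1\wedge\tilde\theta^2\wedge\tilde\theta^{\ol 1}\wedge\tilde\theta^{\ol 2}\wedge\tilde\theta^3$, with coefficients that differ by a nonzero multiple of $A$, forcing $A=0$. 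The main obstacle is the bookkeeping in verifying~(b); the simplifying observation is that the diagonality of the $(1,1)$-part of $d\tilde\theta^3$ eliminates its contribution to the trace, isolating the effect of the correction term $A\theta$.
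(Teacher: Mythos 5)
Your proof is correct and follows essentially the same route as the paper: both arguments rest on the uniqueness characterizations of $\varphi$ and $\sigma$ together with the structure equations \eqref{diff-coframe}, the computation ${\rm d}\hat\xi=2{\rm i}\tilde\theta^3\wedge\hat\xi$, and the vanishing of the trace with respect to the off-diagonal Levi form. The only (harmless) difference is organizational: you parametrize the residual ambiguity ($\theta\wedge\psi$, resp.\ $A\theta$) and solve it to zero using the second defining condition, whereas the paper simply verifies that the candidates $\tilde\theta^1\wedge\tilde\theta^2$ and $\tfrac{1}{2}\tilde\theta^3$ satisfy both conditions (for condition (b) both sides vanish identically) and invokes uniqueness.
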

\begin{proof}
The 2-form $\hat\varphi=\tilde\theta^1\wedge\tilde\theta^2$ satisfies
$\hat\xi=\th\wedge\hat\varphi$ and $V\lrcorner\,\hat\varphi=0$ for any lift $V$ of $T$ since $\tilde T\lrcorner\,\hat\varphi= K\lrcorner\,\hat\varphi=0$. Thus, it satisfies the required characterization. By Proposition~\ref{sigma-condition}, $\hat\sigma$ is characterized by the conditions
\begin{align*}
&{\rm d}\hat\xi=4{\rm i}\hat\sigma\wedge\hat\xi, \\
&\hat\sigma\wedge {\rm d}\hat\varphi\wedge\ol{\hat\varphi}=(\operatorname{Tr} {\rm d}\hat\sigma){\rm i}\hat\sigma\wedge\th\wedge\hat\varphi\wedge\ol{\hat\varphi}.
\end{align*}
By \eqref{diff-coframe}, we have
\[
{\rm d}\hat\xi=2{\rm i}\tilde\theta^3\wedge\hat\xi, \qquad
\tilde\theta^3\wedge {\rm d}\hat\varphi\wedge \ol{\hat\varphi}=0.
\]
Moreover, since
\begin{align*}
{\rm d}\tilde\theta^3 =2{\rm i}P\bigl(\zeta, \ol\zeta\bigr)\tilde\theta^1\wedge\tilde\theta^{\ol1}+{\rm i}\tilde\theta^2\wedge\tilde\theta^{\ol2}+\cdots
 =2{\rm i}P\bigl(\zeta, \ol\zeta\bigr)\theta^1\wedge\theta^{\ol1}+{\rm i}\theta^2\wedge\theta^{\ol2}+\cdots,
\end{align*}
we have $\operatorname{Tr}{\rm d}\tilde\theta^3=0$. Thus, we obtain \smash{$\hat\sigma=\frac{1}{2}\tilde\theta^3$}.
\end{proof}

Since \smash{$g^{\mathrm{F}}=2h_{\a\ol\b}\theta^\a\cdot \theta^{\ol\b}+4\theta\cdot \hat\sigma=2h_{\a\ol\b}\tilde\theta^\a\cdot \tilde\theta^{\ol\b}+4\theta\cdot \hat\sigma$}, we obtain the following theorem.

\begin{Theorem}\label{Fefferman-metric}
The Fefferman metric on the double covering $\wh{\mathcal{C}}$ 
is given by
\[
g^{\mathrm{F}}=2\bigl(\tilde\theta^1\cdot\tilde\theta^{\ol2}+\tilde\theta^2\cdot\tilde\theta^{\ol1}+\theta\cdot \tilde\theta^3\bigr)
\]
in the adapted coframe.
\end{Theorem}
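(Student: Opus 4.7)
The plan is to apply the intrinsic formula $g^{\mathrm{F}} = 2 h_{\a\ol\b}\theta^\a\cdot\theta^{\ol\b} + 4\theta\cdot\sigma$ from Section~\ref{CR-Fefferman-metric}, pulled back along the double cover $\varpi\colon \wh{\mathcal{C}}\to \mathcal{C}$, and identify each ingredient on $\wh{\mathcal{C}}$ in the adapted coframe.

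The normalization requirement $|\det(h_{\a\ol\b})|=1$ in Lee's formula is satisfied: from the first identity in \eqref{diff-coframe}, namely $\mathrm{d}\theta = \mathrm{i}(\tilde\theta^1\wedge\tilde\theta^{\ol2} + \tilde\theta^2\wedge\tilde\theta^{\ol1})$, we read off $h_{1\ol2} = h_{2\ol1} = 1$ and $h_{1\ol1} = h_{2\ol2} = 0$, so the Levi form is off-diagonal with $|\det|=1$. For any local section $s\colon M\to \wh{\mathcal{C}}$, the forms $\tilde\theta^1, \tilde\theta^2$ restrict to an admissible coframe $\theta^1, \theta^2$, and the embedding \eqref{embed-C} realizes $\mathcal{C}$ as $\varpi(\wh{\mathcal{C}})\subset K_M^\times$, so pulling Lee's formula back along $\varpi$ and using the Levi form above yields $\varpi^* g^{\mathrm{F}} = 2(\tilde\theta^1\cdot\tilde\theta^{\ol2} + \tilde\theta^2\cdot\tilde\theta^{\ol1}) + 4\theta\cdot\hat\sigma$.

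All that remains is to substitute $\hat\sigma = \tfrac12 \tilde\theta^3$, which is the content of the preceding lemma. This substitution converts $4\theta\cdot \hat\sigma$ into $2\theta\cdot\tilde\theta^3$, producing the stated formula in one line.

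The genuine obstacle is therefore the identification of $\hat\sigma$ itself. My plan for the lemma is to verify the two characterizing conditions of Proposition~\ref{sigma-condition} for the candidate $\tfrac12\tilde\theta^3$: the first condition, $\mathrm{d}\hat\xi = 4\mathrm{i}\hat\sigma\wedge\hat\xi$, is immediate from \eqref{diff-coframe}, since every term in $\mathrm{d}\theta$, $\mathrm{d}\tilde\theta^1$, $\mathrm{d}\tilde\theta^2$ not containing $\tilde\theta^3$ is killed after wedging with $\theta\wedge\tilde\theta^1\wedge\tilde\theta^2$; the second requires observing that $\tilde\theta^3\wedge\mathrm{d}\hat\varphi\wedge\ol{\hat\varphi}=0$ (clear from \eqref{diff-coframe}) together with $\operatorname{Tr}\mathrm{d}\tilde\theta^3 = 0$. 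The subtle point in the trace is that the prominent $\tilde\theta^1\wedge\tilde\theta^{\ol1}$ coefficient $2\mathrm{i}P(\zeta,\ol\zeta)$ in $\mathrm{d}\tilde\theta^3$ does \emph{not} contribute, because the inverse Levi form is also off-diagonal ($h^{1\ol1} = h^{2\ol2} = 0$), while the $\tilde\theta^1\wedge\tilde\theta^{\ol2}$ and $\tilde\theta^2\wedge\tilde\theta^{\ol1}$ coefficients that \emph{would} contribute are zero by \eqref{diff-coframe}. Once the lemma is in hand, Theorem~\ref{Fefferman-metric} is essentially tautological.
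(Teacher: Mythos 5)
Your proposal is correct and follows essentially the same route as the paper: pull back Lee's formula $g^{\mathrm{F}}=2h_{\a\ol\b}\theta^\a\cdot\theta^{\ol\b}+4\theta\cdot\sigma$ along the double cover, read off the off-diagonal Levi form from ${\rm d}\theta$, and identify $\hat\sigma=\tfrac12\tilde\theta^3$ by verifying the two characterizing conditions of Proposition~\ref{sigma-condition} using \eqref{diff-coframe}. Your observation about why $\operatorname{Tr}{\rm d}\tilde\theta^3=0$ (the inverse Levi form being off-diagonal) is exactly the point the paper relies on, stated there more tersely.
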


The non-zero components of $g^{\mathrm{F}}$ in the adapted coframe \smash{$\bigl(\tilde\th^0=\th, \tilde\th^1, \tilde\th^{2}, \tilde\th^{\ol1}, \tilde\th^{\ol2}, \tilde\th^3\bigr)$} are
\begin{equation}\label{metric-tensor}
g^{\mathrm{F}}_{1\ol2}=g^{\mathrm{F}}_{\ol2 1}=g^{\mathrm{F}}_{2\ol1}=g^{\mathrm{F}}_{\ol1 2}=g^{\mathrm{F}}_{03}=g^{\mathrm{F}}_{30}=1.
\end{equation}

\subsection[The connection form of g\^{}F]{The connection form of $\boldsymbol{g^{\mathrm{F}}}$}
The connection 1-forms $\omega_a{}^b$ of the Fefferman metric are computed as follows.

\begin{Proposition}\label{connection}
In the coframe \smash{$\bigl(\tilde\theta^0=\theta, \tilde\theta^1, \tilde\theta^{2}, \tilde\theta^{\ol1}, \tilde\theta^{\ol2}, \tilde\theta^3\bigr)$}, the connection forms of the Levi-Civita connection of $g^{\rm F}$ are given by
\begin{gather*}
\begin{alignedat}{7}
&\omega_0{}^0=0,\qquad&& \omega_1{}^0=\frac{{\rm i}}{2}\tilde\theta^{\ol2},\qquad&& \omega_2{}^0=\frac{{\rm i}}{2}\tilde\theta^{\ol1},\qquad&& \omega_3{}^0=0,& \\
&\omega_0{}^1=\frac{{\rm i}}{2}\tilde\theta^2,\qquad&& \omega_1{}^1=-\frac{{\rm i}}{2}\tilde\theta^3,\qquad&& \omega_2{}^1=-\frac{{\rm i}}{2}\theta,\qquad&& \omega_{\ol1}{}^1=0,\qquad && \omega_{\ol2}{}^1=0,\qquad&& \omega_3{}^1=\frac{{\rm i}}{2}\tilde\theta^1,&
\end{alignedat}
\\
\omega_0{}^2={\rm i}P(\zeta, \eta)\theta+{\rm i}P\bigl(\zeta, \ol\zeta\bigr)\tilde\theta^1+{\rm i}P(\zeta, \zeta)\tilde\theta^{\ol1},\\
\omega_1{}^2=-{\rm i}P(\eta, \eta)\theta-{\rm i}P\bigl(\ol\zeta, \eta\bigr)\tilde\theta^1-{\rm i}P(\zeta, \eta)\tilde\theta^{\ol1}, \\
\omega_2{}^2=-\frac{{\rm i}}{2}\tilde\theta^3, \qquad
\omega_{\ol1}{}^2=0, \qquad \omega_{\ol2}{}^2=0, \qquad \omega_3{}^2=\frac{{\rm i}}{2}\tilde\theta^2, \\
\omega_0{}^3=0, \qquad \omega_1{}^3={\rm i}P\bigl(\ol\zeta, \eta\bigr)\theta+{\rm i}P\bigl(\ol\zeta, \ol\zeta\bigr)\tilde\theta^1+{\rm i}P\bigl(\zeta, \ol\zeta\bigr)\tilde\theta^{\ol1}, \qquad \omega_2{}^3=\frac{{\rm i}}{2}\tilde\theta^{\ol2}, \qquad
\omega_3{}^3=0
\end{gather*}
and the complex conjugates of these forms.
\end{Proposition}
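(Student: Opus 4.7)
The Levi-Civita connection of $g^{\mathrm{F}}$ is the unique linear connection on $T\wh{\mathcal{C}}$ that is simultaneously torsion-free and metric-compatible, so it suffices to verify that the proposed $1$-forms $\omega_b{}^a$ satisfy both conditions; uniqueness then identifies them with the Levi-Civita connection forms.

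\emph{Step 1: metric compatibility.} Because the frame components $g^{\mathrm{F}}_{ab}$ are constants by \eqref{metric-tensor}, the compatibility condition $dg^{\mathrm{F}}_{ab}=\omega_a{}^c g^{\mathrm{F}}_{cb}+\omega_b{}^c g^{\mathrm{F}}_{ac}$ collapses to the purely algebraic antisymmetry $\omega_{ab}+\omega_{ba}=0$, where $\omega_{ab}:=g^{\mathrm{F}}_{ac}\omega_b{}^c$. In the adapted coframe this index-lowering effects the swaps $0\leftrightarrow 3$, $1\leftrightarrow\ol 2$, $2\leftrightarrow\ol 1$, so the relations to check include $\omega_0{}^3=-\omega_3{}^0=0$, $\omega_0{}^0+\omega_3{}^3=0$, $\omega_1{}^1+\omega_{\ol 2}{}^{\ol 2}=0$, $\omega_1{}^{\ol 2}=\omega_{\ol 2}{}^1=0$, $\omega_2{}^3+\omega_1{}^{\ol 1}=0$, $\omega_1{}^3+\omega_0{}^{\ol 2}=0$, $\omega_2{}^1+\omega_0{}^{\ol 1}=0$, and their complex conjugates. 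Each is a termwise inspection of the proposed formulas, using that complex conjugation sends $\omega_b{}^a$ to $\omega_{\ol b}{}^{\ol a}$, fixes $\theta$ and $\tilde\theta^3$, and replaces $P(\zeta,\cdot)$ by $P(\ol\zeta,\cdot)$ (so that, e.g., $\omega_0{}^{\ol 2}=\ol{\omega_0{}^2}=-{\rm i}P(\ol\zeta,\eta)\theta-{\rm i}P(\zeta,\ol\zeta)\tilde\theta^{\ol 1}-{\rm i}P(\ol\zeta,\ol\zeta)\tilde\theta^1$ indeed cancels $\omega_1{}^3$).

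\emph{Step 2: torsion-freeness.} For each $a$, substitute the proposed $\omega_b{}^a$ into Cartan's first structure equation $d\tilde\theta^a+\omega_b{}^a\wedge\tilde\theta^b=0$ and compare with the formulas in \eqref{diff-coframe}; the cases $a=\ol 1,\ol 2$ follow by conjugation. For $a=0$ and $a=1$ only the `universal' (Schouten-tensor-free) entries of $\omega_b{}^a$ contribute, and the matching with $d\theta={\rm i}\bigl(\tilde\theta^1\wedge\tilde\theta^{\ol 2}+\tilde\theta^2\wedge\tilde\theta^{\ol 1}\bigr)$ and $d\tilde\theta^1={\rm i}\theta\wedge\tilde\theta^2-{\rm i}\tilde\theta^1\wedge\tilde\theta^3$ is immediate; this is what pins down the `flat' pieces such as $\omega_1{}^0=\tfrac{{\rm i}}{2}\tilde\theta^{\ol 2}$, $\omega_2{}^0=\tfrac{{\rm i}}{2}\tilde\theta^{\ol 1}$, $\omega_0{}^1=\tfrac{{\rm i}}{2}\tilde\theta^2$, $\omega_1{}^1=-\tfrac{{\rm i}}{2}\tilde\theta^3$, $\omega_2{}^1=-\tfrac{{\rm i}}{2}\theta$, $\omega_3{}^1=\tfrac{{\rm i}}{2}\tilde\theta^1$. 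The content lies in $a=2$ and $a=3$: the Schouten-tensor-valued entries $\omega_0{}^2,\omega_1{}^2,\omega_1{}^3$ and their conjugates $\omega_0{}^{\ol 2},\omega_{\ol 1}{}^{\ol 2},\omega_{\ol 1}{}^3$ combine with $\theta,\tilde\theta^1,\tilde\theta^{\ol 1}$ to reproduce exactly the Schouten-tensor coefficients on the right-hand sides of the expressions for $d\tilde\theta^2$ and $d\tilde\theta^3$ in \eqref{diff-coframe}, while the `flat' entries $\omega_2{}^2=-\tfrac{{\rm i}}{2}\tilde\theta^3$, $\omega_3{}^2=\tfrac{{\rm i}}{2}\tilde\theta^2$, $\omega_2{}^3=\tfrac{{\rm i}}{2}\tilde\theta^{\ol 2}$ account for the remaining terms $-{\rm i}\tilde\theta^2\wedge\tilde\theta^3$ and ${\rm i}\tilde\theta^2\wedge\tilde\theta^{\ol 2}$.

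\emph{Expected obstacle.} The entire work lies in the bookkeeping for $a=2$ and $a=3$: a given contraction such as $P(\zeta,\eta)$, $P(\zeta,\ol\zeta)$, $P(\eta,\eta)$, $P(\zeta,\zeta)$ appears simultaneously in several of the Schouten-tensor-valued connection forms (and their conjugates), and the task is to verify that the resulting wedges assemble correctly into, e.g., ${\rm i}\bigl(P(\zeta,\ol\zeta)+P(\eta,\eta)\bigr)\theta\wedge\tilde\theta^1$, $2{\rm i}P(\zeta,\ol\zeta)\tilde\theta^1\wedge\tilde\theta^{\ol 1}$. Sign conventions and the placement of $\zeta$ versus $\ol\zeta$ in each slot of $P$ require care, but since $P$ is symmetric and the basis $2$-forms $\theta\wedge\tilde\theta^\alpha$, $\tilde\theta^\alpha\wedge\tilde\theta^{\ol\beta}$, $\tilde\theta^\alpha\wedge\tilde\theta^3$, $\theta\wedge\tilde\theta^3$ are linearly independent, the verification reduces to a finite list of scalar identities, each immediate once all terms are expanded. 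With torsion-freeness and metric compatibility both confirmed, uniqueness of the Levi-Civita connection completes the proof.
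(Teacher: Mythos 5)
Your proposal follows exactly the paper's proof: you verify that the stated forms satisfy metric compatibility (which, since the frame components of $g^{\mathrm{F}}$ are constant, reduces to the antisymmetry relations under the index swaps $0\leftrightarrow3$, $1\leftrightarrow\ol2$, $2\leftrightarrow\ol1$) and Cartan's first structure equation against \eqref{diff-coframe}, and then invoke uniqueness of the Levi-Civita connection. One slip in your illustrative list: $\omega_2{}^3+\omega_1{}^{\ol1}=0$ is not one of the antisymmetry identities (and is false for the stated forms, since $\omega_2{}^3=\tfrac{{\rm i}}{2}\tilde\theta^{\ol2}$ while $\omega_1{}^{\ol1}=0$); the correct pair is $\omega_2{}^3+\omega_0{}^{\ol1}=0$ and $\omega_1{}^{\ol1}+\omega_2{}^{\ol2}=0$, both of which do hold.
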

\begin{proof}
It suffices to check that these forms satisfy the structure equations
\[
g^{\mathrm{F}}_{bc}\omega_a{}^c+g^{\mathrm{F}}_{ac}\omega_b{}^c=dg^{\mathrm{F}}_{ab}, \qquad
{\rm d}\tilde\theta^a=\tilde\theta^b\wedge\omega_b{}^a.
\]
Since the metric tensor is given by \eqref{metric-tensor}, the first equation is equivalent to
\begin{alignat}{5}
&\omega_0{}^3=0,\qquad&& \omega_0{}^{\ol2}+\omega_1{}^3=0,\qquad&& \omega_0{}^{\ol1}+\omega_2{}^3=0,\qquad&& \omega_0{}^0+\omega_3{}^3=0, &\nonumber\\
&\omega_1{}^{\ol2}=0,\qquad&& \omega_1{}^{\ol1}+\omega_2{}^{\ol2}=0,\qquad&& \omega_1{}^{2}+\omega_{\ol1}{}^{\ol2}=0,\qquad&& \omega_1{}^1+\omega_{\ol2}{}^{\ol2}=0,& \nonumber\\
&\omega_1{}^0+\omega_3{}^{\ol2}=0,\qquad&& \omega_2{}^{\ol1}=0,\qquad&& \omega_2{}^{1}+\omega_{\ol2}{}^{\ol1}=0,\qquad&& \omega_2{}^0+\omega_3{}^{\ol1}=0,& \nonumber\\
&\omega_3{}^0=0&\label{metric-connection}
\end{alignat}
and these are satisfied. We can also verify the equation
\[
{\rm d}\tilde\theta^a=\theta\wedge\omega_0{}^a+\tilde\theta^1\wedge\omega_1{}^a+\tilde\theta^2\wedge\omega_2{}^a+\tilde\theta^{\ol1}\wedge\omega_{\ol1}{}^a+\tilde\theta^{\ol2}\wedge\omega_{\ol2}{}^a+\tilde\theta^3\wedge\omega_3{}^a
\]
for $a=0, 1, 2, 3$ by using \eqref{diff-coframe}.
\end{proof}

\subsection[The curvature form of g\^{}F]{The curvature form of $\boldsymbol{g^{\mathrm{F}}}$}
It is straightforward to compute the curvature form
\[
\Omega_a{}^b={\rm d}\omega_a{}^b-\omega_a{}^c\wedge\omega_c{}^b
=\frac{1}{2}R^{\mathrm{F}}_{cd}{}^b{}_a \tilde\theta^c\wedge\tilde\theta^d
\]
by using Proposition~\ref{connection} and the equations \eqref{d-eta} and \eqref{diff-coframe}.
Since $\Omega_a{}^b$ satisfies the same symmetries as in \eqref{metric-connection},
we present only
$\Omega_0{}^0$, $\Omega_1{}^0$, $\Omega_2{}^0$, $\Omega_0{}^{1}$, $\Omega_2{}^1$, $\Omega_0{}^2$, $\Omega_2{}^2$, $\Omega_1{}^2$, $\Omega_2{}^{\ol2}$.

\begin{Proposition}
The curvature forms of $g^{\mathrm{F}}$ are given by
\begin{gather*}
\Omega_0{}^0=\frac{1}{2}P\bigl(\ol\zeta, \eta\bigr)\theta\wedge\tilde\theta^{1}+\frac{1}{2}P(\zeta, \eta)\theta\wedge\tilde\theta^{\ol1}, \\
\Omega_1{}^0=\frac{1}{2}P\bigl(\ol\zeta, \ol\zeta\bigr)\theta\wedge\tilde\theta^1
 +\frac{1}{2}P\bigl(\zeta, \ol\zeta\bigr)\theta\wedge\tilde\theta^{\ol1}-\frac{1}{4}\tilde\theta^{\ol2}\wedge\tilde\theta^3, \\
 \Omega_2{}^0=\frac{1}{4}\theta\wedge\tilde\theta^{\ol2}-\frac{1}{4}\tilde\theta^{\ol1}\wedge\tilde\theta^3, \\
\Omega_0{}^1=-\frac{1}{2}P(\eta, \eta)\theta\wedge\tilde\theta^1+\frac{1}{2}P(\zeta, \eta)\tilde\theta^1\wedge\tilde\theta^{\ol1}+\frac{1}{4}\tilde\theta^2\wedge\tilde\theta^3, \\
\Omega_2{}^1=\frac{1}{4}\tilde\theta^1\wedge\tilde\theta^{\ol2}+\frac{1}{4}\tilde\theta^2\wedge\tilde\theta^{\ol1}, \\
\Omega_0{}^2=(*C)\bigl(\zeta, \ol\zeta\bigr)\theta\wedge\tilde\theta^1-\frac{1}{2}P(\eta, \eta)\theta\wedge\tilde\theta^2
 -(*C)(\zeta, \zeta)\theta\wedge\tilde\theta^{\ol1}+\frac{1}{2}P(\zeta, \eta)\theta\wedge\tilde\theta^3 \\
\hphantom{\Omega_0{}^2=}{}\hspace{0.2mm}
 -\frac{1}{2}P\bigl(\ol\zeta, \eta\bigr)\tilde\theta^1\wedge\tilde\theta^2
+(*C)(\zeta, \eta)\tilde\theta^1\wedge\tilde\theta^{\ol1}
+\frac{1}{2}P\bigl(\zeta, \ol\zeta\bigr) \tilde\theta^1\wedge\tilde\theta^3
+\frac{1}{2}P(\zeta, \eta)\tilde\theta^2\wedge\tilde\theta^{\ol1}
 \\
\hphantom{\Omega_0{}^2=}{}\hspace{0.2mm}
 +\frac{1}{2}P(\zeta, \zeta)\tilde\theta^{\ol1}\wedge\tilde\theta^3, \\
\Omega_2{}^2=\frac{1}{2}P(\zeta, \eta)\theta\wedge\tilde\theta^{\ol1}+\frac{1}{2}P\bigl(\zeta, \ol\zeta\bigr)\tilde\theta^1\wedge\tilde\theta^{\ol1}+\frac{1}{4}\tilde\theta^2\wedge\tilde\theta^{\ol2}, \\
\Omega_1{}^2=-(*C)\bigl(\ol\zeta, \eta\bigr)\theta\wedge\tilde\theta^1 -\frac{1}{2}P\bigl(\ol\zeta, \eta\bigr)\theta\wedge\tilde\theta^2
+(*C)(\zeta, \eta)\theta\wedge\tilde\theta^{\ol1}
+\frac{1}{2}P(\zeta, \eta)\theta\wedge\tilde\theta^{\ol2}
 \\
\hphantom{\Omega_1{}^2=}{}\hspace{0.2mm}
 -\frac{1}{2}P(\ol\zeta ,\ol\zeta)\tilde\theta^1\wedge\tilde\theta^2
-(*C)(\eta, \eta)\tilde\theta^1\wedge\tilde\theta^{\ol1}
+\frac{1}{2}
P\bigl(\zeta, \ol\zeta\bigr)\tilde\theta^1\wedge\tilde\theta^{\ol2}
+\frac{1}{2}P\bigl(\zeta, \ol\zeta\bigr)\tilde\theta^2\wedge\tilde\theta^{\ol1} \\
\hphantom{\Omega_1{}^2=}{}\hspace{0.2mm}
+\frac{1}{2}P(\zeta, \zeta)\tilde\theta^{\ol1}\wedge\tilde\theta^{\ol2}, \\
\Omega_2{}^{\ol2}=\frac{1}{2}P\bigl(\ol\zeta, \eta\bigr)\theta\wedge\tilde\theta^{\ol1}+\frac{1}{2}P\bigl(\ol\zeta, \ol\zeta\bigr)\tilde\theta^1\wedge\tilde\theta^{\ol1}.
\end{gather*}
The other components are computed by the symmetry and the reality of the curvature form.
\end{Proposition}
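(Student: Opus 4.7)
The proposition is a direct verification of the second structure equation $\Omega_a{}^b = {\rm d}\omega_a{}^b - \omega_a{}^c \wedge \omega_c{}^b$ from the explicit connection of Proposition~\ref{connection}. My plan is to carry out the computation pointwise: fix $x_0 \in \Sigma$, take normal coordinates for $g$ centered at $x_0$, and work at an arbitrary $p \in \pi^{-1}(x_0) \subset \wh{\mathcal{C}}$. Then $\Gamma_{ij}{}^k(x_0) = 0$, $g_{ij}(x_0) = \delta_{ij}$, and $\partial_k P_{ij}(x_0) = \nabla_k P_{ij}(x_0)$, so formulas \eqref{d-eta} and \eqref{diff-coframe} apply verbatim and raising or lowering indices commutes with ${\rm d}$ at $p$.

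The listed connection forms split naturally into two groups. The forms $\omega_0{}^1, \omega_2{}^1, \omega_3{}^1, \omega_2{}^2, \omega_3{}^2, \omega_2{}^3$ (and their conjugates) depend only on the coframe $\tilde\theta^a$, so for $\Omega_2{}^0, \Omega_2{}^1, \Omega_2{}^2, \Omega_2{}^{\ol2}$ the computation reduces to substituting \eqref{diff-coframe} into ${\rm d}\omega_a{}^b$ and collecting wedges. The forms $\omega_0{}^0, \omega_1{}^0, \omega_0{}^2, \omega_1{}^2$ carry Schouten-tensor coefficients such as $P(\zeta,\eta)$, $P(\zeta,\ol\zeta)$, $P(\eta,\eta)$, and for these I would expand
\[
{\rm d}P(\zeta,\eta) = (\nabla_k P_{ij})\zeta^i\eta^j\,{\rm d}x^k + P_{ij}\,{\rm d}\zeta^i\,\eta^j + P_{ij}\zeta^i\,{\rm d}\eta^j
\]
at $p$, then substitute \eqref{d-eta} to re-express everything in the adapted coframe.

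The main obstacle is to see the Cotton tensor emerge in the computation of $\Omega_0{}^2$ and $\Omega_1{}^2$. After forming ${\rm d}\omega - \omega\wedge\omega$, the zeroth-derivative pieces $P_{ij}\,{\rm d}\zeta^i$ and $P_{ij}\,{\rm d}\eta^j$ combine with wedges such as $\omega_0{}^3 \wedge \omega_3{}^2$ and $\omega_0{}^{\ol2}\wedge\omega_{\ol2}{}^2$ to cancel the algebraic Schouten redundancies, while the first-derivative pieces survive in antisymmetric form $(\nabla_k P_{ij} - \nabla_j P_{ik})\zeta^i(\cdot)^j(\cdot)^k = C_{ijk}\zeta^i(\cdot)^j(\cdot)^k$. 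The defining identity $C(X,Y,Z) = (*C)(X, Y\times Z)$, together with the cross-product relations $\zeta\times\ol\zeta = -{\rm i}\eta$, $\eta\times\zeta = -{\rm i}\zeta$, $\eta\times\ol\zeta = {\rm i}\ol\zeta$ recorded in Section~\ref{Fefferman-twistor-CR}, then converts each such contraction into the listed values $(*C)(\zeta,\ol\zeta)$, $(*C)(\zeta,\zeta)$, $(*C)(\zeta,\eta)$, $(*C)(\ol\zeta,\eta)$, $(*C)(\eta,\eta)$. The remaining components of $\Omega_a{}^b$ are then forced by the linear relations \eqref{metric-connection} (which $\Omega_a{}^b$ inherits from $\omega_a{}^b$, since those relations come from the parallelism of $g^{\mathrm{F}}$) together with complex conjugation, so no additional independent calculation is needed.
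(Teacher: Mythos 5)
Your proposal is correct and follows essentially the same route as the paper, which simply asserts that the curvature forms are obtained by a straightforward computation of $\Omega_a{}^b={\rm d}\omega_a{}^b-\omega_a{}^c\wedge\omega_c{}^b$ from Proposition~\ref{connection} together with \eqref{d-eta} and \eqref{diff-coframe}, the Cotton tensor arising exactly as you describe from the antisymmetrized $\nabla P$ terms and the cross-product identities, and the remaining components following from the inherited antisymmetry $\Omega_{ab}=-\Omega_{ba}$ and reality. One cosmetic slip: the two quadratic terms you cite as examples, $\omega_0{}^3\wedge\omega_3{}^2$ and $\omega_0{}^{\ol2}\wedge\omega_{\ol2}{}^2$, both vanish identically (since $\omega_0{}^3=0$ and $\omega_{\ol2}{}^2=0$); the surviving quadratic contributions to $\Omega_0{}^2$ are instead $\omega_0{}^1\wedge\omega_1{}^2$ and $\omega_0{}^2\wedge\omega_2{}^2$, but this does not affect the validity of the argument.
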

It follows that the non-zero components of the curvature tensor are
\begin{alignat*}{4}
&R^{\mathrm{F}}_{010\ol2}=\frac{1}{2}P(\eta, \eta),
\qquad&&
R^{\mathrm{F}}_{0\ol21\ol1}=-\frac{1}{2}P(\zeta, \eta),
\qquad&&
R^{\mathrm{F}}_{0\ol223}=-\frac{1}{4},& \\
&R^{\mathrm{F}}_{010\ol1}=-(*C)\bigl(\zeta, \ol\zeta\bigr),
\qquad&&
R^{\mathrm{F}}_{0\ol10\ol1}=(*C)(\zeta, \zeta),
\qquad&&
R^{\mathrm{F}}_{0\ol103}=-\frac{1}{2}P(\zeta, \eta),& \\
&R^{\mathrm{F}}_{0\ol112}=\frac{1}{2}P\bigl(\ol\zeta, \eta\bigr),
\qquad&&
R^{\mathrm{F}}_{0\ol12\ol1}=-\frac{1}{2}P(\zeta, \eta),
\qquad&&
R^{\mathrm{F}}_{0\ol11\ol1}=-(*C)(\zeta, \eta),& \\
&R^{\mathrm{F}}_{0\ol113}=-\frac{1}{2}P\bigl(\zeta, \ol\zeta\bigr),
\qquad&&
R^{\mathrm{F}}_{0\ol1\ol13}=-\frac{1}{2}P(\zeta, \zeta),
\qquad&&
R^{\mathrm{F}}_{121\ol1}=\frac{1}{2}P\bigl(\ol\zeta, \ol\zeta\bigr),& \\
&R^{\mathrm{F}}_{2\ol12\ol2}=-\frac{1}{4},
\qquad&&
R^{\mathrm{F}}_{1\ol12\ol1}=-\frac{1}{2}P\bigl(\zeta, \ol\zeta\bigr),
\qquad&&
R^{\mathrm{F}}_{23\ol13}=\frac{1}{4},& \\
&R^{\mathrm{F}}_{1\ol11\ol1}=(*C)(\eta, \eta)&
\end{alignat*}
and the components which can be obtained from these by the symmetry
$R^{\mathrm{F}}_{abcd}=R^{\mathrm{F}}_{[ab][cd]}=R^{\mathrm{F}}_{cdab}$ and reality of the curvature tensor.

The non-zero components of the Ricci tensor
\[
R^{\mathrm{F}}_{ab}=R^{\mathrm{F}}_{1a\ol2b}+R^{\mathrm{F}}_{\ol2a1b}+R^{\mathrm{F}}_{2a\ol1b}+R^{\mathrm{F}}_{\ol1a2b}+R^{\mathrm{F}}_{0a3b}+R^{\mathrm{F}}_{3a0b}
\]
are given by
\begin{alignat*}{4}
&R^{\mathrm{F}}_{00}=2P(\eta, \eta), \qquad&&
R^{\mathrm{F}}_{01}=2P\bigl(\ol\zeta, \eta\bigr), \qquad&&
R^{\mathrm{F}}_{2\ol2}=1,& \\
&R^{\mathrm{F}}_{11}=2P\bigl(\ol\zeta, \ol\zeta\bigr),\qquad&&
R^{\mathrm{F}}_{1\ol1}=2P\bigl(\zeta, \ol\zeta\bigr),\qquad&&
R^{\mathrm{F}}_{33}=1.&
\end{alignat*}
It follows that the scalar curvature vanishes,
\[
R^{\mathrm{F}}=2\bigl(R^{\mathrm{F}}_{1\ol2}+R^{\mathrm{F}}_{2\ol1}+R^{\mathrm{F}}_{03}\bigr)=0.
\]
The Schouten tensor is equal to $P^{\mathrm{F}}_{ab}=\frac{1}{4}R^{\mathrm{F}}_{ab}$, so the non-zero components of the Weyl curvature
\[
W^{\mathrm{F}}_{abcd}=R^{\mathrm{F}}_{abcd}-\frac{1}{4}
\bigl(R^{\mathrm{F}}_{ac}g^{\mathrm{F}}_{bd}-R^{\mathrm{F}}_{bc}g^{\mathrm{F}}_{ad}+R^{\mathrm{F}}_{bd}g^{\mathrm{F}}_{ac}-R^{\mathrm{F}}_{ad}g^{\mathrm{F}}_{bc}\bigr)
\]
are given by
\begin{alignat*}{3}
&W^{\mathrm{F}}_{010\ol1}=-(*C)\bigl(\zeta, \ol\zeta\bigr),\qquad&&
W^{\mathrm{F}}_{0\ol10\ol1}=(*C)(\zeta, \zeta),& \\
&W^{\mathrm{F}}_{0\ol11\ol1}=-(*C)(\zeta, \eta),\qquad&&
W^{\mathrm{F}}_{1\ol11\ol1}=(*C)(\eta, \eta)&
\end{alignat*}
and the components which can be obtained from these by the symmetry and reality of the Weyl tensor.
Since $\bigl(\zeta, \ol\zeta, \eta\bigr)$ gives a basis of $\mathbb{C}T\Sigma$ at the base point, we obtain the following theorem.

\begin{Theorem}[{\cite[Theorem 6.7]{Low}} and \cite{SY}]\label{flat}
The Fefferman space $\bigl(\wh{\mathcal{C}}, g^{\mathrm{F}}\bigr)$ is conformally flat if and only if $(\Sigma, [g])$ is conformally flat.
\end{Theorem}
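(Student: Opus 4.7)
The plan is to read the conclusion directly off the tabulation of $W^{\mathrm{F}}$ obtained just above the theorem. Because $\dim\wh{\mathcal{C}}=6\ge 4$, conformal flatness of $g^{\mathrm{F}}$ is equivalent to $W^{\mathrm{F}}\equiv 0$, and because $\dim\Sigma=3$ the conformal flatness of $(\Sigma,[g])$ is equivalent to the vanishing of the Cotton tensor $C$ (as recalled in the review of conformal 3-manifolds), hence, since the Hodge star $C\mapsto *C$ is an isomorphism from Cotton-type tensors onto symmetric trace-free $(0,2)$-tensors, to $*C\equiv 0$. Thus the theorem reduces to the pointwise equivalence $W^{\mathrm{F}}_u=0$ iff $(*C)_x=0$, where $u\in\wh{\mathcal{C}}$ lies over $x\in\Sigma$.

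The easy direction is immediate: each independent non-zero Weyl component listed just before the theorem is proportional to a value of $*C$ on arguments drawn from $\{\zeta,\ol\zeta,\eta\}$, so $*C\equiv 0$ on $\Sigma$ forces $W^{\mathrm{F}}\equiv 0$ on $\wh{\mathcal{C}}$.

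For the converse, fix $u\in\wh{\mathcal{C}}$ over $x$, set $\zeta=\zeta(u)$ and $\eta={\rm i}\zeta\times\ol\zeta$, and assume $W^{\mathrm{F}}_u=0$. The four listed independent components then yield
\[
(*C)(\zeta,\ol\zeta)=(*C)(\zeta,\zeta)=(*C)(\zeta,\eta)=(*C)(\eta,\eta)=0,
\]
and complex-conjugating, using that $*C$ is a real symmetric $2$-tensor, also gives $(*C)(\ol\zeta,\ol\zeta)=0$ and $(*C)(\ol\zeta,\eta)=0$. Since $\bigl(\zeta,\ol\zeta,\eta\bigr)$ is a $\mathbb{C}$-basis of $\mathbb{C}T_x\Sigma$ (this is the point emphasized just after the curvature tabulation), these six bilinear identities amount to $(*C)_x=0$; running the argument over all $u$ yields $*C\equiv 0$ and finishes the proof. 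The one step that requires care, and that I expect to be the only real obstacle, is confirming that the four displayed components genuinely exhaust the independent non-zero entries of $W^{\mathrm{F}}$ modulo the symmetries $W^{\mathrm{F}}_{abcd}=W^{\mathrm{F}}_{[ab][cd]}=W^{\mathrm{F}}_{cdab}$ and complex conjugation, so that no other component can conspire to vanish while some $(*C)(\cdot,\cdot)$ is still non-zero; this is a bookkeeping check on the Ricci/Weyl tables rather than a conceptual difficulty.
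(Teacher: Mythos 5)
Your proposal is correct and follows essentially the same route as the paper: the theorem is read off directly from the tabulated Weyl components $W^{\mathrm{F}}_{010\ol1}$, $W^{\mathrm{F}}_{0\ol10\ol1}$, $W^{\mathrm{F}}_{0\ol11\ol1}$, $W^{\mathrm{F}}_{1\ol11\ol1}$, which are exactly the values of $*C$ on the basis $\bigl(\zeta,\ol\zeta,\eta\bigr)$ of $\mathbb{C}T_x\Sigma$, together with the fact that $*C=0$ iff $C=0$ iff $(\Sigma,[g])$ is conformally flat. The bookkeeping point you flag (that the listed components exhaust the independent non-zero entries modulo symmetry and reality) is precisely what the paper's curvature computation establishes.
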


This also implies that the twistor CR manifold $M$ is locally CR equivalent to the hyperquadric if and only if $(\Sigma, [g])$ is conformally flat.

\subsection[Null geodesics of g\^{}F]{Null geodesics of $\boldsymbol{g^{\mathrm{F}}}$}
To examine the projections of chains and null chains to $\Sigma$, we
compute the null geodesic equation for the Fefferman metric.
We fix a representative metric $g\in[g]$, and let \smash{$\gamma(t)=\bigl(x^i(t), \zeta_i(t)\bigr)$} be a curve on $\wh{\mathcal{C}}$ such that the projection \smash{$x(t)\!=\!\bigl(x^i(t)\bigr)$} is a regular curve on $\Sigma$. In the adapted coframe
$\smash{\bigl(\tilde\theta^0\!=\!\theta, \tilde\theta^1, \tilde\theta^2, \tilde\theta^{\ol1}, \tilde\theta^{\ol2}, \tilde\theta^3\bigr)}$, the components \smash{$\dot\gamma^a:=\tilde\th^a(\dot\gamma)$} are given by
\begin{align*}
&\dot\gamma^0=\eta_i \dot x^i=g(\eta, \dot x), \\
&\dot\gamma^1=\zeta_i \dot x^i=g(\zeta, \dot x), \\
&\dot\gamma^2={\rm i}\eta^j\bigl(\dot\zeta_j-\Gamma_{kj}{}^l\zeta_l \dot x^k\bigr)
={\rm i}g(\eta, \nabla_{\dot x}\zeta), \\
&\dot\gamma^3=-\frac{{\rm i}}{2}\bigl(
\ol{\zeta^j}\bigl(\dot\zeta_j-\Gamma_{kj}{}^l\zeta_l \dot x^k\bigr)
-\zeta^j\bigl(\ol{\dot\zeta_j}-\Gamma_{kj}{}^l\ol{\zeta_l} \dot x^k\bigr)
\bigr)=\operatorname{Im} g\bigl(\ol\zeta, \nabla_{\dot x}\zeta\bigr)
=-{\rm i} g\bigl(\ol\zeta, \nabla_{\dot x}\zeta\bigr),
\end{align*}
where $\nabla$ is the Levi-Civita connection of $g$.
The nullity of $\gamma$ is written as
\begin{equation}\label{nullity}
g^{\rm F}(\dot\gamma, \dot\gamma)=2\bigl(\dot\gamma^1\dot\gamma^{\ol2}+\dot\gamma^2\dot\gamma^{\ol1}\bigr)+2\dot\gamma^0\dot\gamma^3=0.
\end{equation}
Let $\nabla^{\mathrm{F}}$ be the Levi-Civita connection of $g^{\mathrm{F}}$. By Proposition~\ref{connection}, we can compute the acceleration
\[
\nabla^{\mathrm{F}}_{\dot\gamma}\dot\gamma^a
=\ddot\gamma^a+\omega_0{}^a(\dot\gamma)\dot\gamma^0
+\omega_1{}^a(\dot\gamma)\dot\gamma^1+\omega_2{}^a(\dot\gamma)\dot\gamma^2+\omega_{\ol1}{}^a(\dot\gamma)\dot\gamma^{\ol1}+\omega_{\ol2}{}^a(\dot\gamma)\dot\gamma^{\ol2}+\omega_3{}^a(\dot\gamma)\dot\gamma^3
\]
as follows:
\begin{gather*}
\nabla^{\mathrm{F}}_{\dot\gamma}\dot\gamma^0=\ddot\gamma^0, \\
\nabla^{\mathrm{F}}_{\dot\gamma}\dot\gamma^1=\ddot\gamma^1, \\
\nabla^{\mathrm{F}}_{\dot\gamma}\dot\gamma^2
= \ddot\gamma^2+{\rm i}P(\zeta, \eta)\bigl(\dot\gamma^0\bigr)^2+{\rm i}\bigl(P\bigl(\zeta, \ol\zeta\bigr)-P(\eta, \eta)\bigr)\dot\gamma^0\dot\gamma^1+{\rm i}P(\zeta, \zeta)\dot\gamma^0\dot\gamma^{\ol1} \\
\hphantom{\nabla^{\mathrm{F}}_{\dot\gamma}\dot\gamma^2=}{}
-{\rm i}P\bigl(\ol\zeta, \eta\bigr)\bigl(\dot\gamma^1\bigr)^2-{\rm i}P(\zeta, \eta)|\dot\gamma^1|^2, \\
\nabla^{\mathrm{F}}_{\dot\gamma}\dot\gamma^3
=\ddot\gamma^3+{\rm i}P\bigl(\ol\zeta, \eta\bigr)\dot\gamma^0\dot\gamma^1-{\rm i}P(\zeta, \eta)\dot\gamma^0\dot\gamma^{\ol1}+{\rm i}P\bigl(\ol\zeta, \ol\zeta\bigr)\bigl(\dot\gamma^1\bigr)^2-{\rm i}P(\zeta, \zeta)\bigl(\dot\gamma^{\ol1}\bigr)^2.
\end{gather*}
Hence, $\gamma$ is a geodesic of $g^{\mathrm{F}}$ if and only if
\begin{equation*}
\dot\gamma^0=\mathrm{const}, \qquad \dot\gamma^1=\mathrm{const},
\end{equation*}
and
\begin{gather}
 \ddot\gamma^2+{\rm i}P(\zeta, \eta)\bigl(\dot\gamma^0\bigr)^2+{\rm i}\bigl(P\bigl(\zeta, \ol\zeta\bigr)-P(\eta, \eta)\bigr)\dot\gamma^0\dot\gamma^1+{\rm i}P(\zeta, \zeta) \dot\gamma^0\dot\gamma^{\ol1} \nonumber \\
 \qquad{} -{\rm i}P\bigl(\ol\zeta, \eta\bigr)\bigl(\dot\gamma^1\bigr)^2-{\rm i}P(\zeta, \eta)|\dot\gamma^1|^2=0,\label{geodesic2}
\\
\ddot\gamma^3+{\rm i}P\bigl(\ol\zeta, \eta\bigr)\dot\gamma^0\dot\gamma^1-{\rm i}P(\zeta, \eta)\dot\gamma^0\dot\gamma^{\ol1}+{\rm i}P\bigl(\ol\zeta, \ol\zeta\bigr)\bigl(\dot\gamma^1\bigr)^2-{\rm i}P(\zeta, \zeta)\bigl(\dot\gamma^{\ol1}\bigr)^2=0.\label{geodesic3}
\end{gather}

We will derive the equation satisfied by $x(t)$ from the null geodesic equation for $\gamma(t)$. We note that for any $V, W\in \mathbb{C}T_x\Sigma$, the identities
\begin{align*}
&V=g(V, \eta)\eta+g\bigl(V, \ol\zeta\bigr)\zeta+g(V, \zeta)\ol\zeta, \\
&g(V, W)=g(V, \eta)g(W, \eta)+g(V, \zeta)g\bigl(W, \ol\zeta\bigr)+g\bigl(V, \ol\zeta\bigr)g(W, \zeta)
\end{align*}
hold on $\hat\pi^{-1}(x)\subset\wh{\mathcal{C}}$. In particular, we have
\begin{equation}\label{x-zeta-eta}
\dot x=\dot\gamma^0\eta+\dot\gamma^{\ol1}\zeta+\dot\gamma^1\ol\zeta, \qquad |\dot x|^2=\bigl(\dot\gamma^0\bigr)^2+2\bigl|\dot\gamma^1\bigr|^2.
\end{equation}
\begin{Lemma}
For any curve $\gamma(t)=(x(t), \zeta(t))$ on $\wh{\mathcal{C}}$, it holds that
\begin{alignat}{3}
&g\bigl(\nabla_{\dot x}\nabla_{\dot x}\zeta, \ol\zeta\bigr) ={\rm i}\ddot\gamma^3-\bigl|\dot\gamma^2\bigr|^2-\bigl(\dot\gamma^3\bigr)^2, \qquad&&
g(\nabla_{\dot x}\nabla_{\dot x}\zeta, \zeta) =\bigl(\dot\gamma^2\bigr)^2,& \nonumber \\
&g(\nabla_{\dot x}\nabla_{\dot x}\zeta, \eta) =-{\rm i}\ddot\gamma^2+\dot\gamma^2\dot\gamma^3, \qquad&&
g(\nabla_{\dot x}\nabla_{\dot x}\eta, \zeta) ={\rm i}\ddot\gamma^2+\dot\gamma^2\dot\gamma^3, &\nonumber \\
&g(\nabla_{\dot x}\nabla_{\dot x}\eta, \eta) =-2\bigl|\dot\gamma^2\bigr|^2.&\label{nabla-zeta-eta}
\end{alignat}
\end{Lemma}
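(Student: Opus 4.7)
The plan is to derive all five identities by differentiating the first-order relations that follow from the constraints defining $\wh{\mathcal{C}}$, combined with explicit expansions of $\nabla_{\dot x}\zeta$ and $\nabla_{\dot x}\eta$ in the basis $\bigl(\zeta,\ol\zeta,\eta\bigr)$.

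Along any curve $\gamma\subset\wh{\mathcal{C}}$, the identities $g(\zeta,\zeta)=0$, $g\bigl(\zeta,\ol\zeta\bigr)=1$, $g(\eta,\eta)=1$, and $g(\eta,\zeta)=g\bigl(\eta,\ol\zeta\bigr)=0$ hold, so differentiating them once and using the compatibility of $\nabla$ with $g$ yields $g(\nabla_{\dot x}\zeta,\zeta)=0$, $g\bigl(\nabla_{\dot x}\zeta,\ol\zeta\bigr)+\ol{g\bigl(\nabla_{\dot x}\zeta,\ol\zeta\bigr)}=0$, $g(\nabla_{\dot x}\eta,\eta)=0$, together with the dual pairings of $\nabla_{\dot x}\eta$ against $\zeta$ and $\ol\zeta$. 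Combining these with the definitions $\dot\gamma^2={\rm i}g(\eta,\nabla_{\dot x}\zeta)$ and $\dot\gamma^3=-{\rm i}g\bigl(\ol\zeta,\nabla_{\dot x}\zeta\bigr)$ (the latter being real by the reality relation just obtained) and with the orthonormal-type expansion $V=g(V,\eta)\eta+g\bigl(V,\ol\zeta\bigr)\zeta+g(V,\zeta)\ol\zeta$ from equation~\eqref{x-zeta-eta}, I arrive at the key expansion $\nabla_{\dot x}\zeta=-{\rm i}\dot\gamma^2\eta+{\rm i}\dot\gamma^3\zeta$.

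Next, I compute $\nabla_{\dot x}\eta$ from $\eta={\rm i}\zeta\times\ol\zeta$. Since the cross product is $\nabla$-parallel (being defined via $g$ and the volume form), the Leibniz rule gives $\nabla_{\dot x}\eta={\rm i}(\nabla_{\dot x}\zeta)\times\ol\zeta+{\rm i}\zeta\times\bigl(\nabla_{\dot x}\ol\zeta\bigr)$; substituting the expansion of $\nabla_{\dot x}\zeta$ and applying the cross-product identities $\zeta\times\ol\zeta=-{\rm i}\eta$, $\eta\times\ol\zeta={\rm i}\ol\zeta$, $\zeta\times\eta={\rm i}\zeta$ from the text yields $\nabla_{\dot x}\eta={\rm i}\dot\gamma^2\ol\zeta-{\rm i}\,\ol{\dot\gamma^2}\zeta$, which is manifestly real.

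With these two expansions in hand, each of the five identities in \eqref{nabla-zeta-eta} reduces to a one-line calculation. For instance, differentiating $g\bigl(\nabla_{\dot x}\zeta,\ol\zeta\bigr)={\rm i}\dot\gamma^3$ once gives $g\bigl(\nabla_{\dot x}\nabla_{\dot x}\zeta,\ol\zeta\bigr)={\rm i}\ddot\gamma^3-g\bigl(\nabla_{\dot x}\zeta,\ol{\nabla_{\dot x}\zeta}\bigr)$, and a direct bilinear computation with the inner products of $\zeta,\ol\zeta,\eta$ gives $g\bigl(\nabla_{\dot x}\zeta,\ol{\nabla_{\dot x}\zeta}\bigr)=\bigl|\dot\gamma^2\bigr|^2+\bigl(\dot\gamma^3\bigr)^2$. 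The remaining four identities follow analogously by differentiating $g(\nabla_{\dot x}\zeta,\zeta)=0$, $g(\nabla_{\dot x}\zeta,\eta)=-{\rm i}\dot\gamma^2$, $g(\nabla_{\dot x}\eta,\zeta)={\rm i}\dot\gamma^2$, and $g(\nabla_{\dot x}\eta,\eta)=0$, and then substituting the known expansions. The routine but most error-prone step is the derivation of $\nabla_{\dot x}\eta$ via the cross product, where signs and complex conjugates must be tracked carefully; everything else is elementary substitution.
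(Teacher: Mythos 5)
Your proof is correct and follows essentially the same route as the paper: differentiate the first-order relations $g(\nabla_{\dot x}\zeta,\ol\zeta)={\rm i}\dot\gamma^3$, $g(\nabla_{\dot x}\zeta,\eta)=-{\rm i}\dot\gamma^2$, etc., once more and evaluate the resulting quadratic cross terms via the frame $\bigl(\zeta,\ol\zeta,\eta\bigr)$. The only cosmetic difference is that you precompute the full expansions $\nabla_{\dot x}\zeta=-{\rm i}\dot\gamma^2\eta+{\rm i}\dot\gamma^3\zeta$ and $\nabla_{\dot x}\eta={\rm i}\dot\gamma^2\ol\zeta-{\rm i}\ol{\dot\gamma^2}\zeta$ (the latter could also be read off directly from the expansion identity rather than via the cross product), whereas the paper evaluates the cross terms on the fly using the bilinear expansion of $g$.
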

\begin{proof}
We only prove the identity for $g(\nabla_{\dot x}\nabla_{\dot x}\eta, \zeta)$ since the other cases are similar. Using the identities noted above, we have
\begin{align*}
g(\nabla_{\dot x}\nabla_{\dot x}\eta, \zeta)
&=\frac{{\rm d}}{{\rm d}t}g(\nabla_{\dot x}\eta, \zeta)-g(\nabla_{\dot x}\eta, \nabla_{\dot x}\zeta) =-\frac{{\rm d}}{{\rm d}t}g(\eta, \nabla_{\dot x}\zeta)-g(\nabla_{\dot x}\eta, \zeta)g\bigl(\nabla_{\dot x}\zeta, \ol\zeta\bigr) \\
&=-\frac{{\rm d}}{{\rm d}t}g(\eta, \nabla_{\dot x}\zeta)+g(\eta, \nabla_{\dot x}\zeta)g\bigl(\nabla_{\dot x}\zeta, \ol\zeta\bigr)
={\rm i}\ddot\gamma^2+\dot\gamma^2\dot\gamma^3.
\tag*{\qed}
\end{align*}
\renewcommand{\qed}{}
\end{proof}

Now we are ready to prove the following.

\begin{Theorem}\label{null-geod-proj}
Let $g\in[g]$ be a representative metric and let $\gamma(t)=(x(t), \zeta(t))$ be a null geodesic of $g^{\mathrm{F}}$ on $\wh{\mathcal{C}}$ such that the projection $x(t)$ is a regular curve on $\Sigma$. Then, $|\dot x|_g$ is constant, and $x(t)$ satisfies
\begin{equation}\label{eq-x}
\nabla_{\dot x}\nabla_{\dot x}\dot x-|\dot x|^2P(\dot x)=
-\bigl(2\bigl|\dot\gamma^2\bigr|^2+\bigl(\dot\gamma^3\bigr)^2+P(\dot x, \dot x)\bigr)\dot x.
\end{equation}
In particular, $x(t)$ is a conformal geodesic of $(\Sigma, [g])$.
\end{Theorem}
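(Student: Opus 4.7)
The first assertion is immediate from \eqref{x-zeta-eta}: the identity $|\dot x|^2 = (\dot\gamma^0)^2 + 2|\dot\gamma^1|^2$ together with the fact that $\dot\gamma^0$ and $\dot\gamma^1$ are constant along any geodesic of $g^{\mathrm{F}}$ (as recorded just above) shows that $|\dot x|_g$ is constant along $\gamma$.

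For the equation on $x(t)$, the plan is to expand both sides in the frame $(\eta, \zeta, \ol\zeta)$ for $\hat\pi^{-1}(\mathbb{C}T\Sigma)$ over $\wh{\mathcal{C}}$ and verify equality component by component. Since $\dot\gamma^0$ and $\dot\gamma^1$ are constant, differentiating the decomposition $\dot x = \dot\gamma^0\eta + \dot\gamma^{\ol 1}\zeta + \dot\gamma^1\ol\zeta$ from \eqref{x-zeta-eta} twice yields
\[
\nabla_{\dot x}\nabla_{\dot x}\dot x = \dot\gamma^0\nabla_{\dot x}\nabla_{\dot x}\eta + \dot\gamma^{\ol 1}\nabla_{\dot x}\nabla_{\dot x}\zeta + \dot\gamma^1\nabla_{\dot x}\nabla_{\dot x}\ol\zeta.
\]
Pairing this with each of $\eta$, $\zeta$, $\ol\zeta$ via $g$ and applying \eqref{nabla-zeta-eta} (and its complex conjugate) turns each of the three resulting scalar identities into a polynomial relation in $\dot\gamma^0$, $\dot\gamma^1$, $\dot\gamma^2$, $\dot\gamma^3$, $\ddot\gamma^2$, $\ddot\gamma^3$ and the components of the Schouten tensor $P$ in the basis $(\eta, \zeta, \ol\zeta)$. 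The geodesic equations \eqref{geodesic2} and \eqref{geodesic3} are then substituted to eliminate $\ddot\gamma^2$ and $\ddot\gamma^3$.

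To assemble the right hand side of \eqref{eq-x}, one uses the nullity relation \eqref{nullity} to trade $\dot\gamma^1\dot\gamma^{\ol 2} + \dot\gamma^2\dot\gamma^{\ol 1}$ for $-\dot\gamma^0\dot\gamma^3$, and the second identity of \eqref{x-zeta-eta} to expand $|\dot x|^2$ and $P(\dot x,\dot x)$ in terms of the conserved components and the Schouten-tensor components $P(\eta,\eta)$, $P(\zeta,\eta)$, $P(\zeta,\ol\zeta)$, etc. In each of the three scalar identities, the Schouten-tensor contributions should then combine to give precisely the relevant component of $|\dot x|^2P(\dot x) - P(\dot x,\dot x)\dot x$, while the remaining purely kinematic terms aggregate to the corresponding component of $-(2|\dot\gamma^2|^2 + (\dot\gamma^3)^2)\dot x$. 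The main obstacle is the bookkeeping of this collection, especially matching the Schouten-tensor terms across all three components; the nullity relation is what forces the kinematic prefactor $2|\dot\gamma^2|^2 + (\dot\gamma^3)^2$ to come out the same in the $\eta$-, $\zeta$-, and $\ol\zeta$-components.

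Once \eqref{eq-x} is established, $\nabla_{\dot x}\nabla_{\dot x}\dot x - |\dot x|^2 P(\dot x)$ is a scalar multiple of $\dot x$, and the constancy of $|\dot x|_g$ already gives $g(\dot x,\nabla_{\dot x}\dot x)=0$. The unparametrized conformal geodesic equation \eqref{conf-geod} therefore reduces to $\dot x \wedge (\nabla_{\dot x}\nabla_{\dot x}\dot x - |\dot x|^2 P(\dot x)) = 0$, which holds by \eqref{eq-x}. Hence $x(t)$ is a conformal geodesic of $(\Sigma, [g])$.
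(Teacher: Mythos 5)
Your proposal follows essentially the same route as the paper's proof: use the constancy of $\dot\gamma^0,\dot\gamma^1$ with \eqref{x-zeta-eta} to get constant speed and the decomposition of $\nabla_{\dot x}\nabla_{\dot x}\dot x$, pair with the frame $(\eta,\zeta,\ol\zeta)$ via \eqref{nabla-zeta-eta}, and eliminate $\ddot\gamma^2,\ddot\gamma^3$ with the geodesic equations \eqref{geodesic2}--\eqref{geodesic3} and the nullity relation \eqref{nullity}; the bookkeeping you defer does close exactly as you predict. The only economy the paper makes that you do not is to check just the $\zeta$- and $\eta$-pairings, since both sides of \eqref{eq-x} are real and the $\ol\zeta$-component is the complex conjugate of the $\zeta$-component.
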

\begin{proof}
We prove that the inner products of both sides of \eqref{eq-x} with $\zeta$, $\eta$ coincide.

Since $\gamma$ is a geodesic, $\dot\gamma^0$ and $\dot\gamma^1$ are constant.
Hence, by \eqref{x-zeta-eta}, $|\dot x|$ is constant and we have
\begin{align*}
\nabla_{\dot x}\nabla_{\dot x}\dot x-|\dot x|^2P(\dot x)
=&\dot\gamma^0\nabla_{\dot x}\nabla_{\dot x}\eta+
\dot\gamma^{\ol1}\nabla_{\dot x}\nabla_{\dot x}\zeta+\dot\gamma^1\nabla_{\dot x}\nabla_{\dot x}\ol\zeta \\
&{}{-}\,\bigl(\bigl(\dot\gamma^0\bigr)^2+2\bigl|\dot\gamma^1\bigr|^2\bigr)\bigl(\dot\gamma^0P(\eta)+\dot\gamma^{\ol1}P(\zeta)+\dot\gamma^1P(\ol\zeta)\bigr).
\end{align*}
By using \eqref{nabla-zeta-eta}, we compute as
\begin{align*}
g\bigl(\nabla_{\dot x}\nabla_{\dot x}\dot x-|\dot x|^2P(\dot x) ,\zeta\bigr)
={}&\dot\gamma^0g(\nabla_{\dot x}\nabla_{\dot x}\eta, \zeta)+
\dot\gamma^{\ol1}g(\nabla_{\dot x}\nabla_{\dot x}\zeta, \zeta)+\dot\gamma^1g\bigl(\nabla_{\dot x}\nabla_{\dot x}\ol\zeta, \zeta\bigr) \\
&{}{-}\,\bigl(\bigl(\dot\gamma^0\bigr)^2+2\bigl|\dot\gamma^1\bigr|^2\bigr)\bigl(\dot\gamma^0P(\eta, \zeta)+\dot\gamma^{\ol1}P(\zeta, \zeta)+\dot\gamma^1P\bigl(\ol\zeta, \zeta\bigr)\bigr) \\
={}&{\rm i}\dot\gamma^0\ddot\gamma^2+\dot\gamma^0\dot\gamma^2\dot\gamma^3+\dot\gamma^{\ol1}\bigl(\dot\gamma^2\bigr)^2
-{\rm i}\dot\gamma^1\ddot\gamma^3-\dot\gamma^1\bigl|\dot\gamma^2\bigr|^2-\dot\gamma^1\bigl(\dot\gamma^3\bigr)^2 \\
&{}{-}\bigl(\bigl(\dot\gamma^0\bigr)^2+2|\dot\gamma^1|^2\bigr)\bigl(\dot\gamma^0P(\zeta, \eta)+\dot\gamma^{\ol1}P(\zeta, \zeta)+\dot\gamma^1P\bigl(\zeta, \ol\zeta\bigr)\bigr).
\end{align*}
By the nullity \eqref{nullity}, we have
\[
\dot\gamma^0\dot\gamma^2\dot\gamma^3+\dot\gamma^{\ol1}\bigl(\dot\gamma^2\bigr)^2 -\dot\gamma^1\bigl|\dot\gamma^2\bigr|^2=-2\dot\gamma^1\bigl|\dot\gamma^2\bigr|^2.
\]
Moreover, the geodesic equations \eqref{geodesic2} and \eqref{geodesic3} give
\begin{gather*}
{\rm i}\dot\gamma^0\ddot\gamma^2=P(\zeta, \eta)\bigl(\dot\gamma^0\bigr)^3+\bigl(P\bigl(\zeta, \ol\zeta\bigr)-P(\eta, \eta)\bigr)\bigl(\dot\gamma^0\bigr)^2\dot\gamma^1+P(\zeta, \zeta) \bigl(\dot\gamma^0\bigr)^2\dot\gamma^{\ol1} \\
\hphantom{{\rm i}\dot\gamma^0\ddot\gamma^2=}{}
-P\bigl(\ol\zeta, \eta\bigr)\dot\gamma^0\bigl(\dot\gamma^1\bigr)^2-P(\zeta, \eta)\dot\gamma^0\bigl|\dot\gamma^1\bigr|^2, \\
-{\rm i}\dot\gamma^1\ddot\gamma^3=-P\bigl(\ol\zeta, \eta\bigr)\dot\gamma^0\bigl(\dot\gamma^1\bigr)^2+P(\zeta, \eta)\dot\gamma^0\bigl|\dot\gamma^{1}\bigr|^2
-P\bigl(\ol\zeta, \ol\zeta\bigr)\bigl(\dot\gamma^1\bigr)^3+P(\zeta, \zeta)\dot\gamma^{\ol1}\bigl|\dot\gamma^{1}\bigr|^2.
\end{gather*}
Substituting these expressions, we obtain
\begin{align*}
g\bigl(\nabla_{\dot x}\nabla_{\dot x}\dot x-|\dot x|^2P(\dot x) ,\zeta\bigr)
&=-2\dot\gamma^1\bigl|\dot\gamma^2\bigr|^2-\dot\gamma^1\bigl(\dot\gamma^3\bigr)^2-\dot\gamma^1P(\dot x, \dot x) \\
&=-\bigl(2\bigl|\dot\gamma^2\bigr|^2+\bigl(\dot\gamma^3\bigr)^2+P(\dot x, \dot x)\bigr)g(\dot x, \zeta).
\end{align*}

Similarly, we compute as
\begin{align*}
g\bigl(\nabla_{\dot x}\nabla_{\dot x}\dot x-|\dot x|^2P(\dot x) ,\eta\bigr)
={}&\dot\gamma^0g(\nabla_{\dot x}\nabla_{\dot x}\eta, \eta)+
\dot\gamma^{\ol1}g(\nabla_{\dot x}\nabla_{\dot x}\zeta, \eta)+\dot\gamma^1g\bigl(\nabla_{\dot x}\nabla_{\dot x}\ol\zeta, \eta\bigr) \\
&{}{-}\bigl(\bigl(\dot\gamma^0\bigr)^2+2\bigl|\dot\gamma^1\bigr|^2\bigr)\bigl(\dot\gamma^0P(\eta, \eta)+\dot\gamma^{\ol1}P(\zeta, \eta)+\dot\gamma^1P\bigl(\ol\zeta, \eta\bigr)\bigr) \\
={}&{-}2\dot\gamma^0\bigl|\dot\gamma^2\bigr|^2-{\rm i}\dot\gamma^{\ol1}\ddot\gamma^2
+\dot\gamma^{\ol1}\dot\gamma^2\dot\gamma^3+{\rm i}\dot\gamma^1\ddot\gamma^{\ol2}+\dot\gamma^1\dot\gamma^{\ol2}\dot\gamma^3 \\
&{}{-}\bigl(\bigl(\dot\gamma^0\bigr)^2+2\bigl|\dot\gamma^1\bigr|^2\bigr)\bigl(\dot\gamma^0P(\eta, \eta)+\dot\gamma^{\ol1}P(\zeta, \eta)+\dot\gamma^1P\bigl(\ol\zeta, \eta\bigr)\bigr).
\end{align*}
By the nullity and the equation \eqref{geodesic2}, we have
\begin{align*}
\dot\gamma^1\dot\gamma^{\ol2}\dot\gamma^3+\dot\gamma^{\ol1}\dot\gamma^2\dot\gamma^3 =-\dot\gamma^0\bigl(\dot\gamma^3\bigr)^2
\end{align*}
and
\begin{align*}
-{\rm i}\dot\gamma^{\ol1}\ddot\gamma^2+{\rm i}\dot\gamma^1\ddot\gamma^{\ol2}
={}&{-}P(\zeta, \eta)\bigl(\dot\gamma^0\bigr)^2\dot\gamma^{\ol1} -P\bigl(\ol\zeta, \eta\bigr)\bigl(\dot\gamma^0\bigr)^2\dot\gamma^1-2\bigl(P\bigl(\zeta, \ol\zeta\bigr)-P(\eta, \eta)\bigr)
\dot\gamma^0\bigl|\dot\gamma^1\bigr|^2\\
&{}{-}\, P(\zeta, \zeta) \dot\gamma^0\bigl(\dot\gamma^{\ol1}\bigr)^2\!-P\bigl(\ol\zeta, \ol\zeta\bigr) \dot\gamma^0\bigl(\dot\gamma^{1}\bigr)^2+2P\bigl(\ol\zeta, \eta\bigr)\dot\gamma^1\bigl|\dot\gamma^1\bigr|^2
 + 2P(\zeta, \eta)\dot\gamma^{\ol1}\bigl|\dot\gamma^1\bigr|^2.
\end{align*}
Substituting these expressions, we obtain
\begin{align*}
g\bigl(\nabla_{\dot x}\nabla_{\dot x}\dot x-|\dot x|^2P(\dot x) ,\eta\bigr)
&=-2\dot\gamma^0\bigl|\dot\gamma^2\bigr|^2-\dot\gamma^0\bigl(\dot\gamma^3\bigr)^2-\dot\gamma^0P(\dot x, \dot x) \\
&=-\bigl(2\bigl|\dot\gamma^2\bigr|^2+\bigl(\dot\gamma^3\bigr)^2+P(\dot x, \dot x)\bigr)g(\dot x, \eta).
\end{align*}
Thus, $x(t)$ satisfies \eqref{eq-x}. In particular, it satisfies the conformal geodesic equation \eqref{conf-geod-const-speed} for curves with constant speed.
\end{proof}

By definition, a (null) chain $(x(t), [\zeta(t)])$ is the projection of a non-vertical null geodesic $\gamma(t)=(x(t), \zeta(t))$ on $\wh{\mathcal{C}}$ to $M$. If the constants $\dot\gamma^0=g(\eta, \dot x)$,
$\dot\gamma^1=g(\zeta, \dot x)$ are both equal to~0, then we have $\dot x=0$ and the projection $x(t)$ is a~constant curve on $\Sigma$. Otherwise, it is a regular curve and hence a conformal geodesic on $\Sigma$ by the above theorem. Thus we obtain Theorem~\ref{chain-projection}\,(1).

As an application, we have the following.

\begin{Theorem}[cf.\ \cite{SY}]
Let $M_i$, $i=1, 2$ be the twistor CR manifolds over conformal $3$-manifolds $(\Sigma_i, [g_i])$, $i=1, 2$. If $\wh f\colon M_1\to M_2$ is a bundle isomorphism which is a CR equivalent map, then the underlying map $f\colon (\Sigma_1, [g_1])\to (\Sigma_2, [g_2])$ is a conformal isomorphism.
\end{Theorem}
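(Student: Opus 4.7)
The plan is to exploit the correspondence between (null) chains on $M$ and conformal geodesics on $\Sigma$ established in Theorem~\ref{chain-projection}, together with a rigidity property of conformal geodesics in dimension three. The CR invariance of the Fefferman conformal class ensures that $\widehat{f}$ lifts canonically to a conformal isomorphism between the Fefferman spaces $\mathcal{C}_1 \to \mathcal{C}_2$, and hence maps null geodesics of $[g^{\mathrm F}]$ to null geodesics, up to reparametrization. The vector field $K$ generating the $S^1$-action on $\mathcal{C}$ is intrinsic to the $S^1$-bundle structure, so the dichotomy $g^{\mathrm F}(K,\dot\gamma)\ne 0$ versus $=0$ is preserved, and $\widehat{f}$ sends chains to chains and null chains to null chains.

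Next, I combine this with the bundle-isomorphism hypothesis, which says that $\widehat{f}$ covers $f$. Every conformal geodesic $x(t)$ on $(\Sigma_1,[g_1])$ can, by Theorem~\ref{chain-projection}(2), be realized as the projection of some chain $\tilde x(t)$ on $M_1$. The image $\widehat{f}\circ \tilde x$ is then a chain on $M_2$, so its projection $f\circ x$ is a non-constant conformal geodesic on $(\Sigma_2,[g_2])$ by Theorem~\ref{chain-projection}(1). Running the same argument with $\widehat{f}^{-1}$, the map $f$ induces a bijection between the families of unparametrized conformal geodesics on the two conformal $3$-manifolds.

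It remains to deduce that $f$ is conformal, and this is the main obstacle. One direct approach is to compare the third-order ODE~\eqref{conf-geod} satisfied by $x(t)$ with respect to $g_1$ and by $f\circ x(t)$ with respect to $f^*g_2$: since \eqref{conf-geod} depends on $[g]$ only through the metric itself and the Schouten tensor $P$, and since conformal geodesics through any $(x_0,[v_0])$ form a sufficiently rich family for their $2$-jet data to sweep out $T_{x_0}\Sigma \otimes T_{x_0}\Sigma$, matching the two ODEs pointwise forces $f^*g_2$ to agree with $g_1$ up to a conformal factor, i.e., $f^*[g_2]=[g_1]$. A more conceptual alternative, and presumably the one of \cite{SY}, is to invoke the general Cartan-geometric principle that the path geometry of unparametrized conformal geodesics on a $3$-manifold determines the normal conformal Cartan connection, and hence the underlying conformal structure; preservation of this path geometry by $f$ is then equivalent to $f$ being a morphism of conformal Cartan geometries.
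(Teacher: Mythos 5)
Your first two paragraphs reproduce the paper's argument exactly: a CR equivalence preserves the Fefferman conformal class and the $S^1$-bundle structure, hence sends chains to chains and null chains to null chains, and by Theorem~\ref{chain-projection} the underlying map $f$ sends conformal geodesics to conformal geodesics. Up to that point the proposal is correct and coincides with the paper's reduction (the paper in fact only needs that $f$ preserves conformal geodesics, not the full bijection you set up).

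The gap is in the final step. That a diffeomorphism of conformal $3$-manifolds preserving the family of unparametrized conformal geodesics must be a conformal isomorphism is itself a nontrivial rigidity theorem; the paper closes the argument by citing \cite[Theorem~2.3]{Kuo}, and neither of your two sketches substitutes for it. In the first sketch, ``matching the two ODEs pointwise'' is not straightforward: equation \eqref{conf-geod} constrains only the $\dot x$-orthogonal component of a third-order expression, is invariant under arbitrary reparametrization, and through each point and direction there is a $2$-parameter family of solutions (parametrized by the normal acceleration), so one must show that the resulting path geometry --- the curves regarded as point sets --- simultaneously pins down the quadratic form $g$ up to scale and the Schouten terms; this requires an actual argument (e.g., recovering $|\dot x|^2$ and $g(\dot x,\nabla_{\dot x}\dot x)$ from the second-order data of the family), which you have not carried out. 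The second sketch invokes a ``general Cartan-geometric principle'' that is essentially the statement to be proved. So the architecture of your proof is right and matches the paper's, but the load-bearing rigidity step is asserted rather than established; citing Kuo's theorem (or proving such a rigidity statement) is what is needed to complete the argument.
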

\begin{proof}
Since $\wh f$ maps any (null) chain on $M_1$ to a (null) chain on $M_2$, $f$ maps conformal geodesics on $\Sigma_1$ to conformal geodesics on $\Sigma_2$. Hence, by~\cite[Theorem 2.3]{Kuo}, $f$ is a conformal isomorphism.
\end{proof}

\subsection{Canonical lift of conformal geodesics}\label{lift}
Theorem~\ref{chain-projection}\,(2) asserts that we can lift a conformal geodesic $x(t)$ to a unique (null) chain on $M$ once we choose a lift of the endpoint $x(a)$; we prove this in Section~\ref{sphere-bundle} below.
Here, as a special case, we present a canonical lift of a conformal geodesic to a chain.

Given a regular curve $x(t)$ on $\Sigma$, we take a complex vector field $\zeta(t)$ along $x(t)$ such that $(\sqrt{2}\operatorname{Re}\zeta, \sqrt{2}\operatorname{Im}\zeta)$ gives an oriented orthonormal basis of $\dot x^\perp$ with respect to a metric ${g\in[g]}$. Such a $\zeta(t)$ is determined by the conformal structure $[g]$ up to $\mathbb{C}^*$-action, so we have a canonical~lift
\[
\tilde x(t):=(x(t), [\zeta(t)])
\]
on $M$. Note that $\tilde x(t)$ is transverse to the contact distribution since
$\th(\dot{\tilde x})=|\dot x|^2\neq 0$.
\begin{Theorem} \label{canonical-lift}
The curve $\tilde x(t)$ is a chain if and only if $x(t)$ is a conformal geodesic. Moreover, for a conformal geodesic $x(t)$, $a\le t\le b$, the curve $\tilde x(t)$ is a unique chain through $(x(a), [\zeta(a)])$ which projects to $x(t)$.
\end{Theorem}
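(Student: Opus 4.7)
The plan is to realize the canonical lift $\tilde x(t)$ as the $\pi_M$-image of an explicit null geodesic of the Fefferman metric on $\wh{\mathcal{C}}$, and to read off both directions of the equivalence and the uniqueness from the structure of the adapted coframe.

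For the direction conformal geodesic $\Rightarrow$ chain, I would use Proposition~\ref{conf-geod-characterization} to pick a representative $g\in[g]$ and a parametrization of $x(t)$ with $\nabla_{\dot x}\dot x=0$ and $P(\dot x)=0$, so that $|\dot x|_g$ is constant. Choose any $\zeta(a)\in[\zeta(a)]$ normalized by $g\bigl(\zeta(a),\ol{\zeta(a)}\bigr)=1$ and extend it along $x$ by Levi-Civita parallel transport. Parallel transport is an orientation-preserving isometry, so $g(\zeta,\zeta)$, $g(\zeta,\ol\zeta)$, and (using $\nabla_{\dot x}\dot x=0$) also $g(\zeta,\dot x)$ are constant in $t$; since they start at $0,1,0$, we find $\zeta(t)\in\wh{\mathcal{C}}$ with $\zeta(t)\perp\dot x(t)$, so $\bigl(\sqrt 2\operatorname{Re}\zeta,\sqrt 2\operatorname{Im}\zeta\bigr)$ stays an oriented orthonormal basis of $\dot x^\perp$, $(x(t),[\zeta(t)])=\tilde x(t)$, and $\eta=\dot x/|\dot x|$ along the curve. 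Setting $\gamma(t):=(x(t),\zeta(t))$, the adapted coframe components are $\dot\gamma^0=|\dot x|$, $\dot\gamma^1=0$, $\dot\gamma^2=\dot\gamma^3=0$ (the last two because $\nabla_{\dot x}\zeta=0$). Nullity $g^{\mathrm F}(\dot\gamma,\dot\gamma)=2\dot\gamma^0\dot\gamma^3=0$ is immediate, and equations \eqref{geodesic2}, \eqref{geodesic3} reduce to the single surviving term ${\rm i}P(\zeta,\eta)(\dot\gamma^0)^2$, which vanishes because $P(\dot x)=0$ and $\eta\parallel\dot x$ force $P(\zeta,\eta)=0$. Hence $\gamma$ is a null geodesic of $g^{\mathrm F}$, and $g^{\mathrm F}(K,\dot\gamma)=\theta(\dot\gamma)=|\dot x|\neq 0$ shows that $\tilde x=\pi_M\circ\gamma$ is a chain.

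The converse is Theorem~\ref{chain-projection}\,(1) applied to $\tilde x$: the projection $x(t)$ must be constant or a conformal geodesic, and the constant case is excluded by $\theta(\dot{\tilde x})=|\dot x|^2\neq 0$. For uniqueness, let $\tilde y$ be any chain through $(x(a),[\zeta(a)])$ projecting as an unparametrized curve to $x(t)$, and lift it to a null geodesic $\gamma'=(y(t),\zeta'(t))$ on $\wh{\mathcal{C}}$ with $\zeta'(a)\in[\zeta(a)]$. By Theorem~\ref{null-geod-proj}, $y$ is a constant-$g$-speed reparametrization of $x$. The canonical class $[\zeta(a)]$ is characterized by $\zeta'(a)\perp\dot x(a)$, so $(\dot\gamma')^1(a)=g(\zeta'(a),\dot y(a))=0$; since $(\dot\gamma')^1$ is a first integral along a null geodesic, $\zeta'(t)\perp\dot y(t)$ for all $t$, placing $[\zeta'(t)]$ in the canonical class along $y$. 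Hence $\tilde y$ is the canonical lift of $y$, which as an unparametrized curve coincides with $\tilde x$.

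The main obstacle is the gauge-fixing step: by choosing the metric and parametrization so that $\nabla_{\dot x}\dot x=P(\dot x)=0$ and taking the $\mathrm U(1)$-lift via parallel transport, all three of $\dot\gamma^1,\dot\gamma^2,\dot\gamma^3$ vanish along the curve and the null geodesic equations collapse to the single scalar identity $P(\zeta,\eta)=0$. Once this gauge is in place, both existence and uniqueness (via the conserved quantity $\dot\gamma^1$) follow mechanically from the formulas already established for the adapted coframe.
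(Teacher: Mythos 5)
Your proof is correct and follows essentially the same route as the paper: gauge-fix via Proposition~\ref{conf-geod-characterization}, exhibit the parallel lift as an explicit null geodesic of $g^{\mathrm{F}}$ (the paper additionally normalizes $|\dot x|=1$, which is immaterial), use Theorem~\ref{chain-projection}\,(1) for the converse, and derive uniqueness from the conserved quantities along null geodesics. The only point worth making explicit in your uniqueness step is that $\zeta'(t)\perp\dot y(t)$ by itself only gives $\eta'(t)=\pm\dot y(t)/|\dot y(t)|$, and the sign is fixed by continuity (equivalently by the constancy and positivity of $\dot\gamma'^{\,0}$), which is exactly how the paper concludes $\eta_c(t)=\dot x(t)$.
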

\begin{proof}
If $\tilde x(t)$ is a chain, then $x(t)$ is a conformal geodesic by Theorem~\ref{chain-projection}\,(1). Conversely, suppose $x(t)$ is a conformal geodesic. By the characterization of conformal geodesics given by Proposition~\ref{conf-geod-characterization}, we may take a representative metric $g\in[g]$ and a parametrization $t$ such that $\nabla_{\dot x}\dot x=P(\dot x)=0$ and $|\dot x|=1$. Since $\dot x$ is parallel, we can take
$\zeta(t)$ as a parallel vector field along~$x(t)$.
Then, $\gamma(t):=(x(t), \zeta(t))$ becomes a curve on $\wh{\mathcal{C}}$ with
\[
\eta=\dot x, \qquad \dot\gamma^0=1, \qquad \dot\gamma^1=\dot\gamma^2=\dot\gamma^3=0.
\]
In particular, it is a null curve. Noting that $P(\eta)=0$, we see that $\gamma(t)$ satisfies the geodesic equations \eqref{geodesic2} and \eqref{geodesic3}, and hence $\tilde x(t)=(x(t), [\zeta(t)])$ is a chain on $M$.

Next, we prove the uniqueness. Suppose that $c (t)$ is a chain through $(x(a), [\zeta(a)])$ which projects to a conformal geodesic $x(t)$, and let $\gamma_c(t)=(x(t), \zeta_c(t))$ be a null geodesic on~${\bigl(\wh{\mathcal{C}}, g^{\mathrm{F}}\bigr)}$, the Fefferman space determined by $g$ above, which projects to $c(t)$. Then, $\dot\gamma_c^0=g(\eta_c, \dot x)$ and $\dot\gamma_c^1=g(\zeta_c, \dot x)$ are constant by the geodesic equation. Since $\dot\gamma_c^0(a)=|\dot x(a)|^2=1$, $\dot\gamma_c^1(a)=0$, we obtain that $\eta_c(t)=\dot x(t)$ for any $t$ and hence $c(t)=\tilde x(t)$.
\end{proof}

\section[A variational principle for conformal geodesics in dimension three]{A variational principle for conformal geodesics\\ in dimension three}\label{variational-principle}

\subsection{Relations to the unit tangent sphere bundle and the frame bundle}\label{sphere-bundle}
We will give an alternative description of the previous constructions in terms of the frame bundle over $\Sigma$.
We fix a representative metric $g\in[g]$ and let
$\mathcal{P}$ be the $\mathrm{SO}(3)$-bundle of oriented orthonormal frames over $(\Sigma, g)$. Then, the map
\begin{equation}\label{isom-C-hat}
f\colon\ \wh{\mathcal{C}}\overset{\sim}{\longrightarrow} \mathcal{P}, \qquad
\zeta \longmapsto B=\bigl(\sqrt{2}\operatorname{Re}\zeta, \sqrt{2}\operatorname{Im}\zeta, \eta\bigr)
\end{equation}
gives a bundle isomorphism. The action of $\mathrm{U}(1)$ on $\wh{\mathcal{C}}$ corresponds to rotations of the first two basis vectors in $B$. Thus, the twistor CR manifold $M= \wh{\mathcal{C}}/\mathrm{U}(1)$ is isomorphic to the unit tangent sphere bundle of $(\Sigma, g)$ by the following map:
\begin{equation}\label{M-isom}
M \overset{\sim}{\longrightarrow} S\Sigma, \qquad
[\zeta]\longmapsto \eta.
\end{equation}

Let
\[
\phi=\begin{pmatrix}
\phi^1 \\ \phi^2 \\ \phi^3
\end{pmatrix}
\colon\ T\mathcal{P}\longrightarrow \mathbb{R}^3, \qquad \omega=\begin{pmatrix}
\hphantom{-}0 & \hphantom{-}\omega^3 & -\omega^2 \\
-\omega^3 & \hphantom{-}0 & \hphantom{-}\omega^1 \\
\hphantom{-}\omega^2 & -\omega^1 & \hphantom{-}0
\end{pmatrix}
\colon\ T\mathcal{P}\longrightarrow \mathfrak{so}(3)
\]
be the canonical $1$-form and the Levi-Civita connection form of $g$.
These forms are related to the adapted coframe \smash{$\bigl\{\theta, \tilde\theta^1, \tilde\theta^2, \tilde\theta^{\ol1}, \tilde\theta^{\ol2}, \tilde\theta^3\bigr\}$} on $\wh{\mathcal{C}}$ as follows.
\begin{Lemma}
We have
\[
f^*\phi=
\begin{pmatrix}
\sqrt{2}\operatorname{Re}\tilde\theta^1 \\
\sqrt{2}\operatorname{Im}\tilde\theta^1 \\
\theta
\end{pmatrix}, \qquad
f^*\omega=\begin{pmatrix}
0 & \tilde\theta^3 & -\sqrt{2}\operatorname{Im}\tilde\theta^2 \\
-\tilde\theta^3 & 0 & \hphantom{-}\sqrt{2}\operatorname{Re}\tilde\theta^2 \\
\sqrt{2}\operatorname{Im}\tilde\theta^2 & -\sqrt{2}\operatorname{Re}\tilde\theta^2& 0
\end{pmatrix}.
\]
\end{Lemma}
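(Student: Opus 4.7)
The plan is to unwind the tautological definitions of $\phi$ and $\omega$ on $\mathcal{P}$ and match them term by term against the adapted coframe on $\wh{\mathcal{C}}$ constructed in the previous subsections.

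For the canonical $1$-form, the component $\phi^i(V)$ at a frame $B=(e_1,e_2,e_3)\in\mathcal{P}_x$ is by definition $g(e_i,\pi_\ast V)$. Under $f$ the frame is $\bigl(\sqrt{2}\operatorname{Re}\zeta,\sqrt{2}\operatorname{Im}\zeta,\eta\bigr)$ and $\pi\circ f=\hat\pi$, so using that the tautological form $\zeta_i\,{\rm d}x^i=\tilde\theta^1$ equals $W\mapsto g(\zeta,\hat\pi_\ast W)$ and $\theta=\eta_i\,{\rm d}x^i$ equals $W\mapsto g(\eta,\hat\pi_\ast W)$ on $\wh{\mathcal{C}}$, one reads off $f^\ast\phi$ at once by taking real and imaginary parts of the first identity.

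For the connection form I use the standard characterization that, for any curve $B(t)=(e_i(t))$ in $\mathcal{P}$ projecting to $x(t)\subset\Sigma$, the Levi-Civita connection satisfies $\nabla_{\dot x}e_j=\omega^i{}_j(\dot B)\,e_i$, where $\omega^i{}_j$ denotes the $(i,j)$ entry of the $\mathfrak{so}(3)$-valued matrix $\omega$. For a curve $\gamma(t)=(x(t),\zeta(t))$ on $\wh{\mathcal{C}}$, differentiating the defining equations $g(\zeta,\zeta)\equiv 0$ and $g\bigl(\zeta,\ol\zeta\bigr)\equiv 1$ shows that the $\ol\zeta$-coefficient of $\nabla_{\dot x}\zeta$ in the basis $\bigl(\zeta,\ol\zeta,\eta\bigr)$ vanishes and that $g\bigl(\ol\zeta,\nabla_{\dot x}\zeta\bigr)$ is purely imaginary (in particular $\tilde\theta^3$ is real-valued). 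Combining this with the explicit formulas $\tilde\theta^2(\dot\gamma)={\rm i}g(\eta,\nabla_{\dot x}\zeta)$ and $\tilde\theta^3(\dot\gamma)=-{\rm i}g\bigl(\ol\zeta,\nabla_{\dot x}\zeta\bigr)$ already used in computing the components $\dot\gamma^a$ yields the expansion $\nabla_{\dot x}\zeta={\rm i}\tilde\theta^3(\dot\gamma)\,\zeta-{\rm i}\tilde\theta^2(\dot\gamma)\,\eta$. Separating real and imaginary parts with $e_1=\sqrt{2}\operatorname{Re}\zeta$, $e_2=\sqrt{2}\operatorname{Im}\zeta$, $e_3=\eta$ gives
\[
\nabla_{\dot x}e_1=-\tilde\theta^3(\dot\gamma)\,e_2+\sqrt{2}\operatorname{Im}\tilde\theta^2(\dot\gamma)\,e_3,\qquad \nabla_{\dot x}e_2=\tilde\theta^3(\dot\gamma)\,e_1-\sqrt{2}\operatorname{Re}\tilde\theta^2(\dot\gamma)\,e_3.
\]
Reading off the first two columns of $f^\ast\omega=(\omega^i{}_j)$ from these expansions and matching with the prescribed packing $\omega^3=\omega^1{}_2$, $\omega^1=\omega^2{}_3$, $\omega^2=\omega^3{}_1$ gives $f^\ast\omega^1=\sqrt{2}\operatorname{Re}\tilde\theta^2$, $f^\ast\omega^2=\sqrt{2}\operatorname{Im}\tilde\theta^2$, and $f^\ast\omega^3=\tilde\theta^3$; the third column of $f^\ast\omega$ is then fixed by the $\mathfrak{so}(3)$-antisymmetry $\omega^i{}_j=-\omega^j{}_i$.

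I expect the main obstacle to be purely organizational: keeping the sign and index conventions for $\omega^i{}_j$ consistent with the given parametrization of $\omega$, and separating real and imaginary parts carefully when expanding $\nabla_{\dot x}\zeta$. None of this is deep, but a stray sign would corrupt the resulting matrix.
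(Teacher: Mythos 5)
Your proof is correct and follows essentially the same route as the paper: both amount to expanding $\mathrm{d}B={}\bigl(\sqrt{2}\operatorname{Re}\mathrm{d}\zeta,\sqrt{2}\operatorname{Im}\mathrm{d}\zeta,\mathrm{d}\eta\bigr)$ against the frame $\bigl(\zeta,\ol\zeta,\eta\bigr)$ and matching the coefficients with the definitions of $\tilde\theta^1$, $\tilde\theta^2$, $\tilde\theta^3$. The only difference is presentational — the paper computes $f^*\phi={}^t\!B\,\mathrm{d}x$ and $f^*\omega={}^t\!B\,\mathrm{d}B$ in normal coordinates and invokes the previously established identities \eqref{d-eta}, whereas you rederive the decomposition $\nabla_{\dot x}\zeta={\rm i}\tilde\theta^3(\dot\gamma)\zeta-{\rm i}\tilde\theta^2(\dot\gamma)\eta$ invariantly from the constraints $g(\zeta,\zeta)=0$, $g\bigl(\zeta,\ol\zeta\bigr)=1$ and use $\mathfrak{so}(3)$-antisymmetry for the third column; your sign bookkeeping is consistent with the paper's packing of $\omega$.
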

\begin{proof}
We fix a point $x_0\in\Sigma$ and take normal coordinates $(x^i)$ of $g$ around
$x_0$. Then, at any point $\zeta\in \wh{\mathcal{C}}_{x_0}$, we have
\[
f^*\phi={}^t \!B\, {\rm d}x =\begin{pmatrix}
\sqrt{2}\operatorname{Re}{}^t\zeta \\
\sqrt{2}\operatorname{Im}{}^t\zeta \\
{}^t\eta
\end{pmatrix}
\begin{pmatrix}
{\rm d}x^1 \\ {\rm d}x^2 \\ {\rm d}x^3
\end{pmatrix}
\]
and
\[
f^*\omega={}^t\!B\,{\rm d}B=
\begin{pmatrix}
\sqrt{2}\operatorname{Re}{}^t\zeta \\
\sqrt{2}\operatorname{Im}{}^t\zeta \\
{}^t\eta
\end{pmatrix}
\begin{pmatrix}
\sqrt{2}\operatorname{Re}{\rm d}\zeta & \sqrt{2}\operatorname{Im} {\rm d}\zeta & {\rm d}\eta
\end{pmatrix},
\]
where we write $\zeta$, $\eta$ as column vectors. Thus we obtain the desired formulas.
\end{proof}

We define the Fefferman metric $G^{\mathrm{F}}$ on $\mathcal{P}$ via the isomorphism \eqref{isom-C-hat}: $g^{\mathrm{F}}=f^* G^{\mathrm{F}}$. Then, the above lemma and Theorem~\ref{Fefferman-metric} prove the following.

\begin{Proposition}
The Fefferman metric $G^{\mathrm{F}}$ on $\mathcal{P}$ is given by
\[
G^{\mathrm{F}}=2\bigl( \phi^1\cdot\omega^1+ \phi^2\cdot\omega^2+ \phi^3\cdot\omega^3\bigr).
\]
\end{Proposition}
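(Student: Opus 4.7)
The plan is a direct pullback computation: since $G^{\mathrm{F}}$ is defined on $\mathcal{P}$ by the relation $g^{\mathrm{F}}=f^*G^{\mathrm{F}}$ and $f$ is a diffeomorphism, it suffices to verify that the proposed right-hand side pulls back under $f$ to the expression for $g^{\mathrm{F}}$ on $\wh{\mathcal{C}}$ given in Theorem~\ref{Fefferman-metric}.

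First I would substitute the formulas from the preceding Lemma. These give $f^*\phi^1=\sqrt{2}\operatorname{Re}\tilde\theta^1$, $f^*\phi^2=\sqrt{2}\operatorname{Im}\tilde\theta^1$, $f^*\phi^3=\theta$, and $f^*\omega^1=\sqrt{2}\operatorname{Re}\tilde\theta^2$, $f^*\omega^2=\sqrt{2}\operatorname{Im}\tilde\theta^2$, $f^*\omega^3=\tilde\theta^3$. Plugging in and using the naturality of the symmetric product,
\[
f^*\bigl(2(\phi^1\cdot\omega^1+\phi^2\cdot\omega^2+\phi^3\cdot\omega^3)\bigr)
=4\bigl(\operatorname{Re}\tilde\theta^1\cdot\operatorname{Re}\tilde\theta^2+\operatorname{Im}\tilde\theta^1\cdot\operatorname{Im}\tilde\theta^2\bigr)+2\,\theta\cdot\tilde\theta^3.
\]

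Next I would convert the real/imaginary parts into the complex symmetric products appearing in $g^{\mathrm{F}}$. For any complex $1$-forms $\alpha=a+ib$, $\beta=c+id$ with $a,b,c,d$ real, one has the identity $\alpha\cdot\bar\beta+\bar\alpha\cdot\beta=2(a\cdot c+b\cdot d)$. Applying this with $\alpha=\tilde\theta^1$, $\beta=\tilde\theta^2$ yields
\[
4\bigl(\operatorname{Re}\tilde\theta^1\cdot\operatorname{Re}\tilde\theta^2+\operatorname{Im}\tilde\theta^1\cdot\operatorname{Im}\tilde\theta^2\bigr)
=2\bigl(\tilde\theta^1\cdot\tilde\theta^{\ol2}+\tilde\theta^{\ol1}\cdot\tilde\theta^2\bigr),
\]
so the pullback equals $2(\tilde\theta^1\cdot\tilde\theta^{\ol2}+\tilde\theta^2\cdot\tilde\theta^{\ol1}+\theta\cdot\tilde\theta^3)=g^{\mathrm{F}}$ by Theorem~\ref{Fefferman-metric}. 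Since $f$ is a diffeomorphism, this uniquely determines $G^{\mathrm{F}}$, completing the proof.

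There is no real obstacle here: the statement is essentially a bookkeeping corollary of the Lemma together with Theorem~\ref{Fefferman-metric}. The only point requiring a moment of care is the real-to-complex identity above, which ensures that the two distinct-looking symmetric expressions on $\wh{\mathcal{C}}$ and $\mathcal{P}$ agree.
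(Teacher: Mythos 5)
Your proof is correct and follows exactly the route the paper takes: the paper simply states that the preceding Lemma (giving $f^*\phi$ and $f^*\omega$ in the adapted coframe) together with Theorem~\ref{Fefferman-metric} proves the Proposition, and your computation is precisely the bookkeeping it leaves implicit. The real-to-complex identity $\alpha\cdot\ol\beta+\ol\alpha\cdot\beta=2(a\cdot c+b\cdot d)$ and the resulting factor matching are carried out correctly.
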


\begin{Remark}\quad
\begin{itemize}\itemsep=0pt
\item[(i)]
If we denote by $R_A$ the right action of $A\in\mathrm{SO}(3)$ on $\mathcal{P}$, we have
\[
R_A^*\begin{pmatrix}
\phi^1 \\ \phi^2 \\ \phi^3
\end{pmatrix}=A^{-1}\begin{pmatrix}
\phi^1 \\ \phi^2 \\ \phi^3
\end{pmatrix}, \qquad R_A^*\begin{pmatrix}
\omega^1 \\ \omega^2 \\ \omega^3
\end{pmatrix}=A^{-1}\begin{pmatrix}
\omega^1 \\ \omega^2 \\ \omega^3
\end{pmatrix}.
\]
Hence, $G^{\mathrm{F}}$ is a right $\mathrm{SO}(3)$-invariant metric: $R_A^*G^{\mathrm{F}}=G^{\mathrm{F}}$.

\item[(ii)] Since the conformal class of the Fefferman metric is invariantly defined from the CR structure, the conformal manifold $\bigl(\mathcal{P}, \bigl[G^{\mathrm{F}}\bigr]\bigr)$ should also be canonically associated to $(\Sigma, [g])$. This can be checked directly as follows.
Let $\wh g={\rm e}^{2\Upsilon}g$ be another representative metric and~$\wh{\mathcal{P}}$ be the oriented orthonormal frame bundle for $\wh g$.
Then, we have the canonical bundle isomorphism
\[
\d\colon\ \mathcal{P}\overset{\sim}{\longrightarrow}
\wh{\mathcal{P}}, \qquad B\longmapsto {\rm e}^{-\Upsilon}B.
\]
We denote by
$\wh\phi$, $\wh\omega$ the canonical 1-form and the connection form of $\wh g$ on $\wh{\mathcal{P}}$. Then, we have $\d^*\wh\phi={\rm e}^{\Upsilon}\phi$ and
the conformal transformation formula for the Levi-Civita connection gives
\begin{align*}
&\d^*\wh\omega^1=\omega^1+\Upsilon^3\phi^2-\Upsilon^2\phi^3, \\
&\d^*\wh\omega^2=\omega^2+\Upsilon^1\phi^3-\Upsilon^3\phi^1, \\
&\d^*\wh\omega^3=\omega^3+\Upsilon^2\phi^1-\Upsilon^1\phi^2,
\end{align*}
where we set $\Upsilon^i:=\phi^i(\mathrm{grad}_g \Upsilon)\in C^\infty(\mathcal{P})$, which is well-defined since $\phi$ is horizontal.
Thus, the Fefferman metric $\wh{G}^{\mathrm{F}}$ on $\wh{\mathcal{P}}$ satisfies
\[
\d^* \wh{G}^{\mathrm{F}}={\rm e}^\Upsilon G^{\mathrm{F}}.
\]

\item[(iii)] The conformal metric $G^{\mathrm{F}}$ on $\mathcal{P}$ is also considered by Holland~\cite{H}, and it is proved in {\cite[Section~3.5.4]{H}} that null geodesics of this metric project to conformal geodesics on $\Sigma$. This gives another proof to Theorem~\ref{null-geod-proj}.
\end{itemize}
\end{Remark}

In Section~\ref{lift}, we defined a canonical lift of a regular curve $x(t)$ on $\Sigma$ to a curve $\tilde x(t)=(x(t), [\zeta(t)])$ on $M$ which becomes a chain if and only if $x(t)$ is a conformal geodesic. This lift corresponds to the curve $(x(t), \dot x(t)/|\dot x(t)|)$ on $S\Sigma$ via the isomorphism~\eqref{M-isom}. Thus, if we regard $S\Sigma$ as the twistor CR manifold,
Theorem~\ref{canonical-lift} yields the following.

\begin{Theorem}\label{chain-conf-geod2}
For a regular curve $x(t)$ on $(\Sigma, g)$, the curve $\tilde x(t)=(x(t), \dot x(t)/|\dot x(t)|)$ on $S\Sigma$ is a~chain if and only if $x(t)$ is a conformal geodesic of $[g]$. Moreover, for a conformal geodesic $x(t)$, $a\le t\le b$, the curve $\tilde x(t)$ is a unique chain through $(x(a), \dot x(a)/|\dot x(a)|)$ which projects to~$x(t)$.
\end{Theorem}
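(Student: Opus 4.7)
The plan is to derive Theorem~\ref{chain-conf-geod2} as a direct corollary of Theorem~\ref{canonical-lift} by transporting the canonical lift of Section~\ref{lift} through the bundle isomorphism \eqref{M-isom}, which sends $[\zeta]\mapsto \eta=2\operatorname{Re}\zeta\times\operatorname{Im}\zeta$.

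The first step is to identify the two canonical lifts. Recall that the lift constructed in Section~\ref{lift} is $\tilde x(t)=(x(t),[\zeta(t)])$, where $\zeta(t)$ is chosen so that $\bigl(\sqrt 2\operatorname{Re}\zeta,\sqrt 2\operatorname{Im}\zeta\bigr)$ is an oriented orthonormal basis of $\dot x^\perp\subset T_{x(t)}\Sigma$. Completing this basis by $\dot x/|\dot x|$ gives an oriented orthonormal basis of $T_{x(t)}\Sigma$, so by the defining property of the cross product,
\[
\eta=\sqrt 2\operatorname{Re}\zeta\times\sqrt 2\operatorname{Im}\zeta=\frac{\dot x}{|\dot x|}.
\]
This is the same convention used in the bundle map \eqref{isom-C-hat}. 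Consequently, the canonical lift on $M$ is carried to $(x(t),\dot x(t)/|\dot x(t)|)$ on $S\Sigma$ under \eqref{M-isom}.

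The second step is to observe that \eqref{M-isom} is a CR isomorphism, because the CR structure on $S\Sigma$ is by construction the one transported from $M$. Hence chains on $M$ correspond bijectively to chains on $S\Sigma$, and the two assertions of Theorem~\ref{canonical-lift} transport verbatim: ``$\tilde x(t)=(x(t),[\zeta(t)])$ is a chain iff $x(t)$ is a conformal geodesic'' becomes ``$(x(t),\dot x(t)/|\dot x(t)|)$ is a chain iff $x(t)$ is a conformal geodesic'', and the uniqueness of the chain lift through $(x(a),[\zeta(a)])$ becomes uniqueness of the chain lift through $(x(a),\dot x(a)/|\dot x(a)|)$.

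There is no substantive obstacle; the only point requiring care is the orientation bookkeeping, namely that the orientation on $\dot x^\perp$ induced from that of $\Sigma$ via $\dot x$ is chosen so that $(v_1,v_2)$ is positively oriented in $\dot x^\perp$ precisely when $(v_1,v_2,\dot x)$ is positively oriented in $T\Sigma$. With this convention the identity $v_1\times v_2=\dot x/|\dot x|$ holds, giving $\eta=\dot x/|\dot x|$ without sign ambiguity, and the theorem follows.
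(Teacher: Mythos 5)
Your proposal is correct and follows essentially the same route as the paper: the paper likewise obtains Theorem~\ref{chain-conf-geod2} immediately from Theorem~\ref{canonical-lift} by noting that the canonical lift $(x(t),[\zeta(t)])$ of Section~\ref{lift} corresponds to $(x(t),\dot x(t)/|\dot x(t)|)$ under the isomorphism \eqref{M-isom}. Your explicit check that $\eta=\sqrt2\operatorname{Re}\zeta\times\sqrt2\operatorname{Im}\zeta=\dot x/|\dot x|$ is exactly the orientation bookkeeping the paper leaves implicit.
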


We can compete the proof of Theorem~\ref{chain-projection}\,(2) by making use of the right $\mathrm{SO}(3)$-invariance of~$G^{\mathrm{F}}$.

\begin{proof}[Proof of Theorem~\ref{chain-projection}\,(2)]
Let $x(t)$, $a\le t\le b$, be a conformal geodesic on $\Sigma$ and $\eta_0\in S_{x(a)}\Sigma$. Take a null geodesic $\gamma(t)$ on $\bigl(\mathcal{P}, G^{\mathrm{F}}\bigr)$ which projects to the chain $\tilde x(t)=(x(t), \dot x(t)/|\dot x(t)|)$.
We choose $A\in \mathrm{SO}(3)$ so that the third basis vector in $R_A(\gamma(a))$ coincides with $\eta_0$, and set $\gamma'(t):=R_A(\gamma(t))$. Since $G^{\mathrm{F}}$ is right-invariant, $\gamma'$ is also a null geodesic. Projecting $\gamma'$ to $S\Sigma$, we obtain a chain ($g(\eta_0, \dot x(a))\neq0$) or a null chain
 ($g(\eta_0, \dot x(a))=0$) through $\eta_0$. The uniqueness also follows from that of $\tilde x(t)$ and the fact that a lift of chain to a null geodesic is uniquely determined by the initial value.
\end{proof}

\subsection[The Kropina metric for chains on S Sigma and a variational characterization of conformal geodesics]{The Kropina metric for chains on $\boldsymbol{S\Sigma}$ and a variational \\ characterization of conformal geodesics}\label{Kropina}

Hereafter, we identify the twistor CR manifold $M$ with the unit tangent sphere bundle $S\Sigma$ for $g\in[g]$, and consider the Fefferman space $\bigl(\mathcal{P}, G^{\mathrm{F}}\bigr)$ instead of $\bigl(\wh{\mathcal{C}}, g^{\mathrm{F}}\bigr)$. We also write $K$ for the infinitesimal generator of the $S^1$-action on $\mathcal{P}$.

To derive a variational principle for conformal geodesics, we first recall from~\cite{CMMM} a characterization of CR chains as geodesics of a Kropina metric.
\begin{Definition}
Let $s\colon U\to\mathcal{P}$ be a local section defined on an open subset $U\subset S\Sigma$. The Finsler metric
\[
F(\tilde x, \xi):=\frac{G^{\mathrm{F}}(s_* \xi, s_*\xi)}{G^{\mathrm{F}}(K, s_*\xi)}, \qquad \tilde x\in U,\quad \xi\in T_{\tilde x}S\Sigma\setminus\operatorname{Ker}\th
\]
is called the {\it Kropina metric} on $U$ associated with $s$.
\end{Definition}
We note that the condition $\xi\notin\operatorname{Ker}\th$ is equivalent to $G^{\mathrm{F}}(K, s_*\xi)\neq0$, and $F$ is not defined on the contact distribution.

\begin{Theorem}[{\cite{CMMM}}]\label{chain-kropina}
A curve $\tilde x(t)$ on $U\subset S\Sigma$ which is transverse to the contact distribution is a chain if and only if it is an $($unparametrized$)$ geodesic of $F$.
\end{Theorem}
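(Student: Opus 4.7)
The plan is Fermat's principle, which reduces non-vertical null geodesics on a Lorentzian manifold with a Killing vector field $K$ to critical curves of a reduced functional on the orbit space. Here $K$ is itself $G^{\mathrm{F}}$-null: substituting $\phi^i(K)=0$, $\omega^3(K)=1$, $\omega^1(K)=\omega^2(K)=0$ into the formula for $G^{\mathrm{F}}$ yields $G^{\mathrm{F}}(K,K)=0$ and $G^{\mathrm{F}}(K,\,\cdot\,)=\phi^3=\pi^*\theta$, where $\pi\colon\mathcal{P}\to S\Sigma$. Thus $G^{\mathrm{F}}(K,\dot\gamma)\neq 0$ is exactly transversality of the projection to the contact distribution, and the null character of $K$ is precisely what makes the reduced functional a Kropina (rather than Randers) metric.

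First I would parametrize null lifts explicitly. For $\tilde x\colon[a,b]\to U$ transverse to contact, every lift to $\mathcal{P}$ has the form $\gamma(t)=R_{{\rm e}^{{\rm i}\phi(t)}}s(\tilde x(t))$. Decomposing $\dot\gamma=(R_{{\rm e}^{{\rm i}\phi}})_*s_*\dot{\tilde x}+\dot\phi\,K$ and using the right $S^1$-invariance of $G^{\mathrm{F}}$ together with $G^{\mathrm{F}}(K,K)=0$, the nullity condition $G^{\mathrm{F}}(\dot\gamma,\dot\gamma)=0$ reduces to the scalar ODE
\[
\dot\phi(t)=-\frac{G^{\mathrm{F}}(s_*\dot{\tilde x},s_*\dot{\tilde x})}{2\,G^{\mathrm{F}}(K,s_*\dot{\tilde x})}=-\tfrac12\,F\bigl(\tilde x,\dot{\tilde x}\bigr),
\]
which integrates to $\phi(b)-\phi(a)=-\tfrac12 L_F[\tilde x]$, where $L_F[\tilde x]:=\int_a^b F\,{\rm d}t$. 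Thus the Kropina length is, up to the factor $-2$, the $\phi$-increment of the null lift, and Noether conservation of $G^{\mathrm{F}}(K,\dot\gamma)=\theta(\dot{\tilde x})$ along any null geodesic keeps the chain/null-chain dichotomy intact along $\gamma_0$.

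The variational characterization then follows from the first variation formula applied to variations through null curves. Given a variation $\tilde x_s$ of $\tilde x_0$ fixing the endpoints in $S\Sigma$, lift to null curves $\gamma_s$ with $\gamma_s(a)=\gamma_0(a)$ fixed; the integrated nullity relation forces $\partial_s\gamma_s(b)|_{s=0}=-\tfrac12(\delta L_F)K|_{\gamma_0(b)}$. Since each $\gamma_s$ is null, $\int G^{\mathrm{F}}(\dot\gamma_s,\dot\gamma_s)\,{\rm d}t\equiv 0$, and the standard first variation identity, combined with the Killing identity $G^{\mathrm{F}}(\nabla^{\mathrm{F}}_{\dot\gamma_0}\dot\gamma_0,K)=0$ to eliminate the vertical residue of $\partial_s\gamma_s$, simplifies to
\[
\theta(\dot{\tilde x}_0(b))\,\delta L_F=-2\int_a^b G^{\mathrm{F}}\bigl(\nabla^{\mathrm{F}}_{\dot\gamma_0}\dot\gamma_0,\,(R_{{\rm e}^{{\rm i}\phi_0}})_*s_*\delta\tilde x\bigr)\,{\rm d}t.
\]
Since $\theta(\dot{\tilde x}_0(b))\neq 0$ for a chain, $\tilde x_0$ is critical for $L_F$ if and only if the right-hand integral vanishes for every admissible $\delta\tilde x$, which by the fundamental lemma of the calculus of variations and reparametrization invariance forces $\nabla^{\mathrm{F}}_{\dot\gamma_0}\dot\gamma_0\propto\dot\gamma_0$, i.e.\ $\gamma_0$ is an unparametrized null geodesic.

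The main subtlety is the reparametrization gauge: $L_F$ is $1$-homogeneous in $\dot{\tilde x}$, so its Euler--Lagrange system is degenerate along the direction of motion, and its critical curves form unparametrized families, matching the unparametrized nature of chains. Independence of $L_F$ from the local section $s$ up to boundary terms (a change $s'=R_{{\rm e}^{{\rm i}\psi}}\circ s$ shifts $F$ by $d\psi(\dot{\tilde x})$) follows from the $S^1$-equivariance of $\phi$ and $\omega$, making the variational problem intrinsic to $(U,F)$.
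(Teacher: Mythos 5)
The paper itself gives no proof of this statement --- it is imported verbatim from \cite{CMMM} --- so there is no in-paper argument to compare against; your proposal should be measured against the proof in that reference, and it is essentially the same Fermat-principle mechanism used there: reduction of null curves modulo the null Killing field $K$, the integrated nullity relation $\phi(b)-\phi(a)=-\frac{1}{2}L_F[\tilde x]$, and the first-variation bookkeeping that converts criticality of the arrival phase into the null geodesic equation. The outline is correct. The one step you should justify rather than assert is the ``Killing identity'' $G^{\mathrm{F}}\bigl(\nabla^{\mathrm{F}}_{\dot\gamma_0}\dot\gamma_0,K\bigr)=0$: for a Killing field this quantity equals $\frac{\mathrm{d}}{\mathrm{d}t}G^{\mathrm{F}}(K,\dot\gamma_0)=\frac{\mathrm{d}}{\mathrm{d}t}\theta\bigl(\dot{\tilde x}_0\bigr)$, which vanishes automatically along a null geodesic but not along an arbitrary competitor curve, and in the direction ``critical for $L_F$ $\Rightarrow$ chain'' the curve $\gamma_0$ is not yet known to be a geodesic. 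The fix is to normalize the parametrization of $\tilde x_0$ so that $\theta(\dot{\tilde x}_0)$ is constant (possible precisely because of the transversality hypothesis $\theta(\dot{\tilde x}_0)\neq 0$); then the vertical residue of the variation field genuinely drops out, the fundamental lemma yields $G^{\mathrm{F}}\bigl(\nabla^{\mathrm{F}}_{\dot\gamma_0}\dot\gamma_0,\cdot\bigr)=0$ on a complement of $\mathbb{R}K$, and combined with nullity one gets $\nabla^{\mathrm{F}}_{\dot\gamma_0}\dot\gamma_0\propto\dot\gamma_0$. Up to that repair and some harmless factors of $2$ (the section change gives $F'=F+2\,\mathrm{d}\psi(\dot{\tilde x})$, and the constant in your first-variation identity depends on whether you vary $\int G^{\mathrm{F}}$ or $\frac{1}{2}\int G^{\mathrm{F}}$), the argument is sound.
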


For a regular curve $x(t)$, $a\le t\le b$, on $\Sigma$, we consider the Kropina length of the lift
$\tilde x(t):=(x(t), \dot x(t)/|\dot x(t)|)$ associated with a section $s\colon U\to \mathcal{P}$ on a neighborhood $U\subset S\Sigma$ of the curve $\tilde x(t)$, and define the functional
\[
\mathscr{L}[x(t)]:=\int_a^b F\bigl(\tilde x(t), \dot{\tilde x}(t)\bigr)\, {\rm d}t.
\]
Noting that
\[
\phi(s_*\dot{\tilde x})=\begin{pmatrix}
0 \\ 0 \\ |\dot x|
\end{pmatrix}, \qquad \phi(K)=0, \qquad \omega(K)=Y:=\begin{pmatrix}
\hphantom{-}0 & 1 & 0\\
-1 & 0 & 0 \\
\hphantom{-}0& 0& 0
\end{pmatrix},
\]
we have
\[
F\bigl(\tilde x(t), \dot{\tilde x}(t)\bigr)=2\omega^3\bigl(s_*\dot{\tilde x}\bigr).
\]

If $x(t)$ is a conformal geodesic, $\tilde x(t)$ is a chain by Theorem~\ref{chain-conf-geod2}, and hence $x(t)$ is a critical curve of $\mathscr{L}$ by Theorem~\ref{chain-kropina}. To prove the converse, we will show that the variation of the Kropina length in the vertical directions always vanishes.

\begin{Proposition}\label{vertical-variation}
Let $\gamma(t)=(x(t), B(t))$, $a\le t\le b$, be a curve on $\mathcal{P}$ which projects to $\tilde x(t)=(x(t), \dot x(t)/|\dot x(t)|)$. Let $\gamma_\epsilon(t)$ be a variation of $\gamma(t)$ fixing the endpoints such that the variation vector field $(\partial/\partial \epsilon)|_{\epsilon=0}\gamma_\epsilon(t)$ is vertical with respect to the fiberation $\mathcal{P}\to \Sigma$. Then, we~have
\[
\frac{\mathrm{d}}{\mathrm{d} \epsilon}\Big|_{\epsilon=0}
\int _a^b \frac{G^{\mathrm{F}}(\dot \gamma_\epsilon(t), \dot \gamma_\epsilon(t))}{G^{\mathrm{F}}(K, \dot \gamma_\epsilon(t))}\,{\rm d}t=0.
\]
\end{Proposition}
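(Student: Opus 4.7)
The plan is to parametrize the vertical variation explicitly, rewrite the integrand in the coframe $(\phi^i, \omega^i)$ on $\mathcal{P}$, and then combine the torsion-free first structure equation with horizontality of the Levi-Civita curvature to show that all first-order terms in $\epsilon$ collapse to a total $t$-derivative, which vanishes by the endpoint condition. Since $U := (\partial/\partial \epsilon)|_{\epsilon=0}\gamma_\epsilon$ is vertical for $\mathcal{P} \to \Sigma$, I would write $U(t) = K_{V(t)}$, where $K_Y$ is the fundamental vertical vector field of $Y \in \mathfrak{so}(3)$ on $\mathcal{P}$ and $V\colon [a,b] \to \mathfrak{so}(3)$ satisfies $V(a) = V(b) = 0$ by the endpoint condition. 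Expanding $V = V^1 T_1 + V^2 T_2 + V^3 T_3$ in the basis dual to $(\omega^1, \omega^2, \omega^3)$ (with $T_3$ generating the $S^1$-action, so $K = K_{T_3}$), one gets $\omega^i(U) = V^i$ while $\phi^i(U) = 0$ by horizontality of $\phi$. Because $\gamma(t)$ projects to $\tilde x(t) = (x(t), \dot x(t)/|\dot x(t)|)$, the third basis vector of $\gamma(t)$ equals $\dot x/|\dot x|$, yielding $\phi^1(\dot\gamma) = \phi^2(\dot\gamma) = 0$ and $\phi^3(\dot\gamma) = |\dot x|$ at $\epsilon = 0$.

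Using $G^{\mathrm{F}} = 2\sum_i \phi^i \cdot \omega^i$ and $K \lrcorner\, G^{\mathrm{F}} = 2\phi^3$, the integrand equals $(\phi^1\omega^1 + \phi^2\omega^2 + \phi^3\omega^3)/\phi^3$ evaluated on $\dot\gamma_\epsilon$. Differentiating at $\epsilon = 0$ and using $\phi^1|_0 = \phi^2|_0 = 0$ to kill the quotient-rule cross terms, this derivative reduces to
\[
\frac{(\partial_\epsilon\phi^1|_0)\,\omega^1(\dot\gamma) + (\partial_\epsilon\phi^2|_0)\,\omega^2(\dot\gamma)}{|\dot x|} + \partial_\epsilon\omega^3(\dot\gamma_\epsilon)|_{\epsilon=0}.
\]
I would then evaluate each derivative via the standard commutation identity $\partial_\epsilon|_0\, \alpha(\dot\gamma_\epsilon) = \partial_t(\alpha(U)) - d\alpha(\dot\gamma, U)$ for any 1-form $\alpha$ on $\mathcal{P}$. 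For $\alpha = \phi^i$ we have $\alpha(U) = 0$, and the torsion-free first structure equation $d\phi = -\omega \wedge \phi$ (with $\omega^j(U) = V^j$ and $\phi^3(\dot\gamma) = |\dot x|$) gives $\partial_\epsilon|_0 \phi^1(\dot\gamma_\epsilon) = V^2 |\dot x|$ and $\partial_\epsilon|_0 \phi^2(\dot\gamma_\epsilon) = -V^1|\dot x|$. For $\alpha = \omega^3$ we have $\alpha(U) = V^3$, so $\partial_t(\alpha(U)) = \dot V^3$; the second structure equation has the form $d\omega^3 = \omega^1 \wedge \omega^2 + (\text{horizontal curvature term})$, and because $U$ is vertical the curvature piece dies in $d\omega^3(\dot\gamma, U)$, leaving $d\omega^3(\dot\gamma, U) = \omega^1(\dot\gamma) V^2 - V^1 \omega^2(\dot\gamma)$.

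Summing the three contributions produces
\[
\partial_\epsilon|_0\Bigl(\tfrac{G^{\mathrm{F}}(\dot\gamma_\epsilon,\dot\gamma_\epsilon)}{G^{\mathrm{F}}(K,\dot\gamma_\epsilon)}\Bigr) = V^2\omega^1(\dot\gamma) - V^1\omega^2(\dot\gamma) + \dot V^3 - V^2\omega^1(\dot\gamma) + V^1\omega^2(\dot\gamma) = \dot V^3,
\]
whose integral over $[a,b]$ is $V^3(b) - V^3(a) = 0$ by the boundary condition. The main obstacle is verifying this cancellation cleanly: the $(V^1, V^2)$-contributions coming from the first structure equation and from the curvature-free wedge $\omega^1 \wedge \omega^2$ in $d\omega^3$ appear with opposite signs and annihilate, so only the $S^1$-component $V^3$ of the vertical variation survives, and it contributes purely as an exact derivative. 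Geometrically, this reflects the fact that variations along the $S^1$-fiber of $\mathcal{P} \to S\Sigma$ merely change the adapted section (a boundary shift), while the remaining vertical directions of $\mathcal{P} \to \Sigma$ affect the integrand only through a total derivative.
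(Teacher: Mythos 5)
Your proof is correct, but it runs along a genuinely different track from the paper's. The paper normalizes the variation as $\gamma_\epsilon(t)=R_{{\rm e}^{\epsilon X(t)}}\gamma(t)$ and works entirely with the group action: it splits $\dot\gamma_\epsilon$ into $(R_{{\rm e}^{\epsilon X}})_*\dot\gamma+\epsilon\dot X^\dagger$, uses the right-invariance of $G^{\mathrm{F}}$ together with the total nullity of the fibers to get the \emph{exact} identity $G^{\mathrm{F}}(\dot\gamma_\epsilon,\dot\gamma_\epsilon)=G^{\mathrm{F}}(\dot\gamma,\dot\gamma)+2\epsilon G^{\mathrm{F}}\bigl(\dot\gamma,({\rm Ad}({\rm e}^{\epsilon X})\dot X)^\dagger\bigr)$, and kills the variation of the denominator by the purely algebraic fact that $[X,Y]$ has vanishing $\omega^3$-component. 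You instead keep only the first-order data $U=K_{V(t)}$ and replace the group-theoretic bookkeeping by the Cartan structure equations plus the commutation identity $\partial_\epsilon|_0\,\alpha(\dot\gamma_\epsilon)=\partial_t(\alpha(U))-{\rm d}\alpha(\dot\gamma,U)$; the role of ``fibers totally null'' is played by $\phi(U)=0$, the role of ${\rm Ad}$-equivariance by ${\rm d}\phi=-\omega\wedge\phi$ and ${\rm d}\omega^3=\omega^1\wedge\omega^2+\Omega^1{}_2$ with $\Omega$ horizontal, and the cancellation of the $(V^1,V^2)$-terms between the ${\rm d}\phi$- and ${\rm d}\omega^3$-contributions substitutes for the vanishing of $\omega^3([X,Y]^\dagger)$. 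Your route is slightly more economical in that it never needs to choose a full variation realizing $U$, and it makes transparent why only the $S^1$-component $V^3$ survives (as an exact derivative); the paper's route avoids the quotient-rule bookkeeping and the sign-sensitive cancellation by getting an exact, not merely first-order, expansion. Two harmless quibbles: your normalization $K\lrcorner\, G^{\mathrm{F}}=2\phi^3$ is off by an overall constant from $G^{\mathrm{F}}(K,\xi)=\phi^3(\xi)$ that follows from $G^{\mathrm{F}}=2\sum_i\phi^i\cdot\omega^i$ with $\phi(K)=0$, $\omega^3(K)=1$ (this rescales the functional by a constant and does not affect criticality), and you should note explicitly that $\phi^3(\dot\gamma)=|\dot x|\neq0$ (regularity of $x$) so that the quotient rule applies.
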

\begin{proof}
By the assumption on $\gamma(t)$, we have
\begin{equation}\label{phi-gamma}
\phi(\dot\gamma(t))=\begin{pmatrix} 0 \\ 0 \\ |\dot x|\end{pmatrix}.
\end{equation}

We may assume that the variation is of the form
\[
\gamma_\epsilon(t)=R_{{\rm e}^{\epsilon X(t)}}\gamma(t)=\gamma(t)\cdot {\rm e}^{\epsilon X(t)},
\]
where
\[
X(t)=\begin{pmatrix}
\hphantom{-}0 & \hphantom{-}X^3(t) & -X^2(t) \\
-X^3(t) & \hphantom{-}0 & \hphantom{-}X^1(t) \\
\hphantom{-}X^2(t) & -X^1(t) & \hphantom{-}0
\end{pmatrix}
\]
is an $\mathfrak{so}(3)$-valued function with $X(a)=X(b)=O$.

For any $X\in\mathfrak{so}(3)$, we denote by $X^\dagger$ the vertical vector field on $\mathcal{P}$ generated by $X$:
\[
X^\dagger_u:=\frac{\partial}{\partial s}\biggl|_{s=0}R_{{\rm e}^{sX}} u, \qquad u\in \mathcal{P}.
\]
Then, we have
\[
(R_{A})_* X^\dagger=\bigl({\rm Ad}\bigl(A^{-1}\bigr)X\bigr)^\dagger
\]
for any $A\in{\rm SO}(3)$ and $X\in\mathfrak{so}(3)$.

The velocity of the curve $\gamma_\epsilon(t)$ is given by
\[
\dot\gamma_\epsilon(t)=(R_{{\rm e}^{\epsilon X(t)}})_* \dot\gamma(t)+\epsilon\dot X(t)^{\dagger}.
\]
Then, by using the right-invariance of $G^{\rm F}$ and the fact that the fibers of $\mathcal{P}$ are totally null, we~compute as
\begin{align*}
G^{\rm F}(\dot\gamma_\epsilon, \dot\gamma_\epsilon)
={}&G^{\rm F} ((R_{{\rm e}^{\epsilon X}})_* \dot\gamma, (R_{{\rm e}^{\epsilon X}})_* \dot\gamma )
 2\epsilon G^{\rm F}\bigl((R_{{\rm e}^{\epsilon X}})_* \dot\gamma,
\dot X^{\dagger}\bigr)+\epsilon^2G^{\rm F}\bigl(\dot X^\dagger, \dot X^\dagger\bigr) \\
={}&G^{\rm F}(\dot\gamma, \dot\gamma)
+2\epsilon G^{\rm F}\bigl(\dot\gamma, \bigl({\rm Ad}\bigl({\rm e}^{\epsilon X}\bigr)\dot X\bigr)^\dagger\bigr).
\end{align*}
Hence, using \eqref{phi-gamma}, we have
\begin{equation*}
\frac{\partial}{\partial \epsilon}\bigg|_{\epsilon=0}G^{\rm F}(\dot\gamma_\epsilon, \dot\gamma_\epsilon)
=2G^{\rm F}\bigl(\dot\gamma, \dot X^\dagger\bigr)
=2|\dot x|\omega^3\bigl(\dot X^\dagger\bigr)=2|\dot x|\dot X^3.
\end{equation*}
On the other hand, since $K=Y^\dagger$ the denominator of the integrand is computed as
\begin{align*}
G^{\rm F}(K, \dot\gamma_\epsilon)
=G^{\rm F}\bigl(Y^\dagger, (R_{{\rm e}^{\epsilon X}})_* \dot\gamma+\epsilon\dot X^{\dagger}\bigr)
=G^{\rm F}\bigl(\bigl({\rm Ad}\bigl({\rm e}^{\epsilon X}\bigr) Y\bigr)^\dagger, \dot\gamma\bigr)
=\omega^3\bigl(\bigl({\rm Ad}\bigl({\rm e}^{\epsilon X}\bigr) Y\bigr)^\dagger\bigr),
\end{align*}
and hence
\[
\frac{\partial}{\partial \epsilon}\bigg|_{\epsilon=0}G^{\rm F}(K, \dot\gamma_\epsilon)
=\omega^3\bigl([X, Y]^\dagger\bigr) =\omega^3\left(\begin{pmatrix}
0 & 0 & -X^1 \\
0 & 0 & -X^2 \\
X^1 & X^2 & \hphantom{-}0
\end{pmatrix}^{\dagger}\right)=0.
\]
Thus, noting that $G^{F}(K, \dot\gamma)=|\dot x|\omega^3\bigl(Y^\dagger\bigr)=|\dot x|$, we have
\[
\frac{\mathrm{d}}{\mathrm{d} \epsilon}\bigg|_{\epsilon=0}
\int _a^b \frac{G^{\rm F}(\dot \gamma_\epsilon(t), \dot \gamma_\epsilon(t))}{G^{\rm F}(K, \dot \gamma_\epsilon(t))}\,{\rm d}t=
2\int_a^b \dot X^3(t) \, {\rm d}t=2\bigl(X^3(b)-X^3(a)\bigr)=0.
\tag*{\qed}
\]
\renewcommand{\qed}{}
\end{proof}

Using this proposition, we prove the following variational characterization of conformal geodesics.

\begin{Theorem}\label{variational-characterization}
Let $x(t)$, $a\le t\le b$, be a regular curve on $(\Sigma, g)$ and $\tilde x(t)=(x(t), \dot x(t)/|\dot x(t)|)$ its lift to $S\Sigma$. Let $F$ be the Kropina metric on $S\Sigma$ associated with a local section $s$ of $\mathcal{P}\to S\Sigma$ defined in a neighborhood of $\tilde x(t)$. Then, $x(t)$ is a conformal geodesic on $(\Sigma, g)$ if and only if it is a critical curve of the functional
\begin{equation}\label{conf-geod-functional}
\mathscr{L}[x(t)]=\int_a^b F\bigl(\tilde x(t), \dot{\tilde x}(t)\bigr)\, {\rm d}t
\biggl(=2\int_a^b \omega^3\bigl(s_*\dot{\tilde x}\bigr)\,{\rm d}t\biggr)
\end{equation}
under the variations fixing $\tilde x(a)$ and $\tilde x(b)$.
\end{Theorem}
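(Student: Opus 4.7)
The plan is to combine Theorem~\ref{chain-conf-geod2} (conformal geodesic $\Leftrightarrow$ canonical lift is a chain) and Theorem~\ref{chain-kropina} (chains are unparametrized $F$-geodesics) with Proposition~\ref{vertical-variation}.

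The forward direction is straightforward. If $x(t)$ is a conformal geodesic, then by Theorem~\ref{chain-conf-geod2} the canonical lift $\tilde x(t)=(x(t),\dot x(t)/|\dot x(t)|)$ is a chain on $S\Sigma$, hence by Theorem~\ref{chain-kropina} an unparametrized geodesic of $F$. Consequently, the Kropina length functional is stationary at $\tilde x$ with respect to all variations of $\tilde x$ on $S\Sigma$ fixing the endpoints $\tilde x(a),\tilde x(b)$. For any variation $x_\epsilon$ of $x$ that fixes $\tilde x(a),\tilde x(b)$ in the sense of the theorem, the jet lift $\bar x_\epsilon=(x_\epsilon,\dot x_\epsilon/|\dot x_\epsilon|)$ is precisely such a fixed-endpoint variation of $\tilde x$ on $S\Sigma$, and $\mathscr{L}[x_\epsilon]=\int F(\bar x_\epsilon,\dot{\bar x}_\epsilon)\,dt$; therefore $\delta\mathscr{L}[x_\epsilon]=0$.

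For the backward direction, suppose $x$ is critical for $\mathscr{L}$. By Theorems~\ref{chain-kropina} and~\ref{chain-conf-geod2} it suffices to show that $\tilde x$ is an unparametrized $F$-geodesic. Given an arbitrary variation $\tilde x_\epsilon$ of $\tilde x$ on $S\Sigma$ fixing the endpoints, I will lift it to $\gamma_\epsilon=s\circ\tilde x_\epsilon$ on $\mathcal{P}$ and set $x_\epsilon=\pi\circ\tilde x_\epsilon$ on $\Sigma$. The variation vector field $W=\partial_\epsilon|_{\epsilon=0}\gamma_\epsilon$ along $\gamma=s\circ\tilde x$ decomposes as $W=W_{\mathrm{jet}}+W_{\mathrm{vert}}$, where $W_{\mathrm{jet}}$ is the variation vector of the auxiliary curve $s\circ(x_\epsilon,\dot x_\epsilon/|\dot x_\epsilon|)$ and $W_{\mathrm{vert}}:=W-W_{\mathrm{jet}}$ is vertical with respect to $\mathcal{P}\to\Sigma$ (since both $W$ and $W_{\mathrm{jet}}$ project to $\partial_\epsilon|_0 x_\epsilon$ on $T\Sigma$). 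By linearity the first variation of the Kropina length splits into a jet-lift piece and a vertical piece. The former equals $\delta\mathscr{L}$ at the projected variation $x_\epsilon$, which vanishes by hypothesis; the latter vanishes by Proposition~\ref{vertical-variation}.

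The main technical obstacle is the endpoint bookkeeping: neither $W_{\mathrm{jet}}$ nor $W_{\mathrm{vert}}$ individually vanishes at $t=a,b$---only their sum $W$ does---so the criticality hypothesis and Proposition~\ref{vertical-variation} do not apply to the two pieces as literally stated. I expect to circumvent this via a fundamental-lemma (localization) argument: being an $F$-geodesic is a pointwise Euler--Lagrange condition along $\tilde x$, so it suffices to test the first variation against variation vectors compactly supported in the open interval $(a,b)$, where all endpoint constraints are automatically vacuous. Criticality of $\mathscr{L}$ under compactly supported jet-lift variations yields the component of the chain Euler--Lagrange equation in the directions coming from $T\Sigma$ via the jet lift, while Proposition~\ref{vertical-variation} applied to compactly supported vertical variations yields the complementary vertical component; together they give the full chain equation at every interior point of $\tilde x$, completing the proof via Theorem~\ref{chain-conf-geod2}.
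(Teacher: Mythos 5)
Your proof is correct and follows essentially the same route as the paper: the forward direction via Theorems~\ref{chain-conf-geod2} and~\ref{chain-kropina}, and the converse by decomposing an arbitrary fixed-endpoint variation of $\tilde x$ into the jet lift of the projected variation plus a vertical remainder, with the two pieces killed by the criticality hypothesis and Proposition~\ref{vertical-variation} respectively. The paper carries out this same decomposition without addressing the endpoint issue you flag (the two pieces need not individually vanish at $t=a,b$ because the jet lift involves $\tfrac{\rm d}{{\rm d}t}(\pi_*V)$), so your localization to compactly supported test variations is a legitimate refinement of the same argument rather than a different approach.
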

\begin{proof}
If $x(t)$ is a conformal geodesic, $\tilde x(t)$ is a chain on $S\Sigma$ by
Theorem~\ref{chain-conf-geod2}. Thus, $\tilde x(t)$ is a~critical curve for the Kropina length and hence $x(t)$ is a critical curve of $\mathscr{L}$ for the variations fixing $\tilde x(a)$ and $\tilde x(b)$.

Conversely, suppose $x(t)$ is a critical curve of $\mathscr{L}$. We will show that the Kropina length functional $L$ is stationary under variations of $\tilde x(t)$. Let $y_\epsilon(t)$ be an arbitrary variation of $\tilde x(t)$ and let $V(t)=(\partial/\partial \epsilon)|_{\epsilon=0}y_\epsilon(t)$ be the variation vector field. We take a variation $x_\epsilon(t)$ of $x(t)$ which has the variation vector field $\pi_* V(t)$, where $\pi\colon S\Sigma\to \Sigma$ is the projection. Then, we can write as
\[
V(t)=V_1(t)+V_2(t),
\]
where $V_1(t):=(\partial/\partial \epsilon)|_{\epsilon=0}\tilde x_\epsilon (t)$ is the variation vector field of the lift $\tilde x_\epsilon (t):=(x_\epsilon(t), \dot x_\epsilon(t)/|\dot x_\epsilon (t)|)$ and $V_2(t)\in \operatorname{Ker} \pi_*$ is a vertical vector field. By the assumption, we have $L'(V_1)=\mathscr{L}'(\pi_*V)=0$. Since we also have $L'(V_2)=0$ by Proposition~\ref{vertical-variation}, we obtain $L'(V)=0$.
\end{proof}

\subsection{The total torsion functional}
We will discuss the relation between the functional $\mathscr{L}$ and the total torsion functional, which is defined as follows.

Let $x(t)$, $a\le t\le b$, be a regular curve on $(\Sigma, g)$. We fix oriented orthonormal basis $A=(e_1, e_2, \dot x(a)/|\dot x(a)|)\in \mathcal{P}_{x(a)}$ and
$B=\bigl(e'_1, e'_2, \dot x(b)/|\dot x(b)|\bigr)\in \mathcal{P}_{x(b)}$ at the endpoints.
Then, we have the associated orientation preserving orthogonal map
\[
h\colon\ \dot x(a)^\perp\longrightarrow \dot x(b)^\perp,
\]
satisfying $h(e_i)=e'_i$. This is called the {\it monodromy map} in~\cite{CKPP}. We also write
\[
P\colon\ \dot x(a)^\perp\longrightarrow \dot x(b)^\perp
\]
for the parallel transport along $x(t)$ with respect to the normal connection $\nabla^\perp$. Then, the {\it total torsion $($functional$)$} is defined by
\[
\mathscr{T}[x(t)]:=h^{-1}\circ P\in \mathrm{SO}\bigl(\dot x(a)^\perp\bigr)\cong \mathbb{R}/2\pi\mathbb{Z}.
\]
Note that the definition of $\mathscr{T}$ depends on the choice of orthonormal basis $A$, $B$ at the endpoints.
\begin{Proposition}[{\cite[Lemma 1]{CKPP}}]\label{torsion-integral-proposition}
Let $T(t):=\dot x(t)/|\dot x(t)|$ be the unit velocity vector field, and
let $\nu(t)\in \dot x(t)^\perp$ be a unit normal vector field such that $h(\nu(a))=\nu(b)$. Then, we have
\begin{equation}\label{torsion-integral}
\mathscr{T}[x(t)]\equiv -\int_a^b g(\nabla_{\dot x}\nu, T\times \nu)\,{\rm d}t\mod 2\pi\mathbb{Z}.
\end{equation}
\end{Proposition}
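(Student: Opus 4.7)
The plan is to introduce a $\nabla^\perp$-parallel orthonormal frame of the normal bundle along $x(t)$, expand $\nu$ in it, and read off the rotation $h^{-1}\circ P\in\mathrm{SO}\bigl(\dot x(a)^\perp\bigr)$ as the total angular displacement of $\nu$ relative to this frame.

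First, I would verify that the complex structure $J\colon V\mapsto T\times V$ on the normal bundle is $\nabla^\perp$-parallel along $x$. Starting from
\[
\nabla_{\dot x}(T\times V)=(\nabla_{\dot x}T)\times V+T\times\nabla_{\dot x}V,
\]
the first term $(\nabla_{\dot x}T)\times V$ is a multiple of $T$ since $\nabla_{\dot x}T\perp T$ (as $|T|=1$) and $V\perp T$, while $T\times\nabla_{\dot x}V=T\times\nabla^\perp_{\dot x}V$. Projecting onto $\dot x^\perp$ gives $\nabla^\perp_{\dot x}(T\times V)=T\times\nabla^\perp_{\dot x}V$, establishing the claim. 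Consequently, if $\nu_P(t)$ is the $\nabla^\perp$-parallel unit normal with $\nu_P(a)=\nu(a)$, then $(\nu_P,T\times\nu_P)$ is a parallel oriented orthonormal frame for $\dot x^\perp$ along $x(t)$, and $P(\nu(a))=\nu_P(b)$ by definition of $P$.

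Writing $\nu(t)=\cos\theta(t)\,\nu_P(t)+\sin\theta(t)\,(T\times\nu_P)(t)$ with $\theta(a)=0$ and applying $\nabla^\perp_{\dot x}$ to both sides, the parallelism of the frame yields $\nabla^\perp_{\dot x}\nu=\dot\theta(t)\,(T\times\nu)$. Pairing with $T\times\nu$ and noting that $T\times\nu\in\dot x^\perp$ kills the tangential part of $\nabla_{\dot x}\nu$, I obtain
\[
\dot\theta(t)=g\bigl(\nabla^\perp_{\dot x}\nu,\,T\times\nu\bigr)=g(\nabla_{\dot x}\nu,\,T\times\nu).
\]

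Finally, at $t=b$ the assumption $h(\nu(a))=\nu(b)$ combined with $P(\nu(a))=\nu_P(b)$ means that $\nu_P(b)$ is obtained from $\nu(b)$ by the rotation of angle $-\theta(b)$ in the oriented plane $\dot x(b)^\perp$. A direct matrix computation in the oriented orthonormal bases $(\nu(a),T\times\nu(a))$ of $\dot x(a)^\perp$ and $(\nu(b),T\times\nu(b))$ of $\dot x(b)^\perp$, using that $h$ is orientation-preserving orthogonal and sends one basis to the other, shows that $h^{-1}\circ P$ is the rotation of angle $-\theta(b)$ in $\dot x(a)^\perp$. Integrating $\dot\theta$ then gives the asserted formula modulo $2\pi\mathbb{Z}$. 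The only genuinely non-routine step is the parallelism of $J$; everything after that is a trigonometric identity in an oriented $2$-plane, and the $\mathrm{mod}\,2\pi\mathbb{Z}$ indeterminacy absorbs the multivaluedness of the angle.
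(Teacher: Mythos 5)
Your proof is correct and follows essentially the same route as the paper: the paper also takes the $\nabla^\perp$-parallel transport $\xi$ of $\nu(a)$, writes $\xi={\rm e}^{{\rm i}\alpha}\cdot\nu$ (so $\alpha=-\theta$ in your notation), and identifies $\mathscr{T}\equiv\alpha(b)$ with $-\dot\alpha=g(\nabla_{\dot x}\nu, T\times\nu)$. You spell out two steps the paper leaves implicit, namely the $\nabla^\perp$-parallelism of $J=T\times{}\cdot{}$ and the identification of $h^{-1}\circ P$ with the rotation by $-\theta(b)$, both of which are verified correctly.
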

\begin{proof}
Let $\xi(t)$ be a unit normal vector field along $x(t)$ obtained by the parallel transport of~$\nu(a)$ with respect to $\nabla^\perp$. Using the $\mathrm{U}(1)$-action on $\dot x(t)^\perp$, we can write as
\[
\xi(t)= {\rm e}^{{\rm i}\alpha(t)}\cdot \nu(t)
\]
with a real function $\alpha(t)$ satisfying $\alpha(a)=0$. Then, we have $\mathscr{T}[x(t)]\equiv \alpha(b)$ mod $2\pi\mathbb{Z}$ and
\[
g(\nabla_{\dot x}\nu, T\times \nu)=g\bigl(\nabla^\perp_{\dot x}\nu, T\times \nu\bigr)=-\dot \alpha.
\]
Hence, we obtain \eqref{torsion-integral}.
\end{proof}

\begin{Remark}
We added the assumption $h(\nu(a))=\nu(b)$, which is missed in {\cite[Lemma 1]{CKPP}}. We also modified the formula by putting the negative sign.
\end{Remark}

The formula \eqref{torsion-integral} implies that the functional $\mathscr{T}$ is conformally invariant.

\begin{Proposition}\label{conf-invariance-torsion}
Let $\mathscr{T}[x(t)]$ be the total torsion associated with orthonormal basis $A\in \mathcal{P}_{x(a)}$ and $B\in \mathcal{P}_{x(b)}$. Let \smash{$\wh g={\rm e}^{2\Upsilon}g$} be a rescaled metric and let \smash{$\wh{\mathscr{T}}[x(t)]$} be the total torsion for \smash{$\wh g$} associated with the orthonormal basis \smash{${\rm e}^{-\Upsilon}A$} and \smash{${\rm e}^{-\Upsilon}B$}. Then, we have
\[
\mathscr{T}[x(t)]\equiv\wh{\mathscr{T}}[x(t)]\mod 2\pi\mathbb{Z}.
\]
\end{Proposition}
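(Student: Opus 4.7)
The plan is to use the integral formula \eqref{torsion-integral} from Proposition~\ref{torsion-integral-proposition}, combined with the observation that the conformal rescaling $g\mapsto\wh g={\rm e}^{2\Upsilon}g$ intertwines $\nabla^\perp$-parallel unit normal fields along $x(t)$ with $\wh\nabla^\perp$-parallel unit normal fields via the scalar rescaling $\xi\mapsto{\rm e}^{-\Upsilon}\xi$. This will make the rotation angle $\alpha(t)$ appearing in the proof of Proposition~\ref{torsion-integral-proposition} literally unchanged, yielding the conformal invariance.

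Concretely, I would first fix a unit normal $\nu(t)$ along $x(t)$ with $h(\nu(a))=\nu(b)$ and let $\xi(t)={\rm e}^{{\rm i}\alpha(t)}\nu(t)$ be the $\nabla^\perp$-parallel unit normal with $\xi(a)=\nu(a)$, so that $\mathscr{T}[x(t)]\equiv\alpha(b)\bmod 2\pi\mathbb{Z}$. Setting $\wh\nu:={\rm e}^{-\Upsilon}\nu$ and $\wh\xi:={\rm e}^{-\Upsilon}\xi$, I check that $\wh\nu$ is a $\wh g$-unit normal, and that the monodromy condition $\wh h(\wh\nu(a))=\wh\nu(b)$ relative to the basis ${\rm e}^{-\Upsilon}A$, ${\rm e}^{-\Upsilon}B$ is immediate from $\nu(a)=c_ie_i$, $\nu(b)=c_ie'_i$ and the fact that $\wh h$ sends ${\rm e}^{-\Upsilon(x(a))}e_i$ to ${\rm e}^{-\Upsilon(x(b))}e'_i$.

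The key step is showing $\wh\nabla^\perp_{\dot x}\wh\xi=0$. By the standard conformal transformation law
\[
\wh\nabla_{\dot x}\xi=\nabla_{\dot x}\xi+(\dot x\Upsilon)\xi+(\xi\Upsilon)\dot x-g(\dot x,\xi)\,\mathrm{grad}_g\Upsilon
\]
and $g(\dot x,\xi)=0$, combined with the Leibniz rule for $\wh\nabla_{\dot x}\bigl({\rm e}^{-\Upsilon}\xi\bigr)$, the terms involving $\dot x\Upsilon$ cancel and one finds
\[
\wh\nabla_{\dot x}\wh\xi={\rm e}^{-\Upsilon}\bigl(\nabla_{\dot x}\xi+(\xi\Upsilon)\dot x\bigr).
\]
Projecting onto $\dot x^\perp$ (which, together with the projection operator, coincides for $g$ and $\wh g$ since only the direction $\mathbb{R}\dot x$ enters the formula $V\mapsto V-g(V,\dot x)g(\dot x,\dot x)^{-1}\dot x$), the $\dot x$-component is annihilated and we obtain $\wh\nabla^\perp_{\dot x}\wh\xi={\rm e}^{-\Upsilon}\nabla^\perp_{\dot x}\xi=0$.

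Since $\wh\xi(t)={\rm e}^{{\rm i}\alpha(t)}\wh\nu(t)$ with the same function $\alpha(t)$, applying Proposition~\ref{torsion-integral-proposition} to $\wh g$ gives $\wh{\mathscr{T}}[x(t)]\equiv\alpha(b)\equiv\mathscr{T}[x(t)]\bmod 2\pi\mathbb{Z}$. The only subtle point is the cancellation in the conformal transformation of the normal connection, which relies on $g(\dot x,\xi)=0$ both to kill the $\mathrm{grad}_g\Upsilon$ term in the transformation law and to ensure that $(\xi\Upsilon)\dot x$ disappears upon projection to $\dot x^\perp$; this is really the content of the proposition.
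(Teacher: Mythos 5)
Your proof is correct and rests on the same key computation as the paper's: the conformal transformation law gives $\wh\nabla_{\dot x}\bigl({\rm e}^{-\Upsilon}\xi\bigr)={\rm e}^{-\Upsilon}\bigl(\nabla_{\dot x}\xi+(\xi\Upsilon)\dot x\bigr)$, and the correction term is proportional to $\dot x$ and hence invisible to the normal plane. The only difference is packaging: the paper checks pointwise invariance of the integrand $g(\nabla_{\dot x}\nu, T\times\nu)$ in \eqref{torsion-integral}, whereas you show that $\xi\mapsto{\rm e}^{-\Upsilon}\xi$ intertwines the two normal parallel transports (and the two monodromy maps), so the holonomy angle $\alpha(b)$ is literally unchanged -- the two arguments are equivalent.
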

\begin{proof}
The unit velocity vector field of $x(t)$ with respect to $\wh g$ is given by $\wh T:={\rm e}^{-\Upsilon}T$, and we can use $\wh \nu:={\rm e}^{-\Upsilon}\nu$ as a unit normal vector field. Then, we have \smash{$\wh T\,\wh{\times}\,\wh \nu={\rm e}^{-\Upsilon}T\times \nu$}, where $\wh{\times}$ denotes the cross product with respect to $\wh g$. Moreover, the conformal transformation formula of the Levi-Civita connection gives
\begin{align*}
\wh\nabla_{\dot x}\wh\nu ={\rm e}^{-\Upsilon}
 (-(\partial_t \Upsilon) \nu+\nabla_{\dot x}\nu+(\partial_t\Upsilon) \nu +(\nu\cdot \Upsilon) \dot x )
 ={\rm e}^{-\Upsilon} (\nabla_{\dot x}\nu+(\nu\cdot \Upsilon) \dot x ).
\end{align*}
Hence, we obtain
\[
\wh g\bigl(\wh\nabla_{\dot x}\wh\nu, \wh T\,\wh{\times}\,\wh \nu\bigr)
=g\bigl(\nabla_{\dot x}\nu, T\times \nu\bigr).
\tag*{\qed}
\]
\renewcommand{\qed}{}
\end{proof}

The functionals $\mathscr{L}$ and $\mathscr{T}$ are related as follows.

\begin{Proposition}\label{F-T}
Let $\mathscr{L}$ be the functional given by \eqref{conf-geod-functional}. Let $\mathscr{T}$ be the total torsion functional associated to the orthonormal basis $s(\tilde x(a))$, $s(\tilde x(b))$ at the endpoints. Then, we have
\[
\mathscr{L}[x(t)]\equiv 2\mathscr{T}[x(t)]\mod 2\pi\mathbb{Z}.
\]
\end{Proposition}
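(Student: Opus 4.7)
The plan is to reduce both sides to the same integral by evaluating the pulled-back connection form $s^*\omega^3$ against $\dot{\tilde x}$, and then invoking Proposition~\ref{torsion-integral-proposition} for a cleverly chosen unit normal field. Write $B(t) := s(\tilde x(t)) = (e_1(t), e_2(t), T(t))$, where $T(t) = \dot x(t)/|\dot x(t)|$ since $s$ is a section of $\mathcal{P} \to S\Sigma$. The lemma derived earlier (giving $f^*\omega = {}^tB\, dB$) extends verbatim to any section: the matrix of $s^*\omega$ along the curve has entries $(s^*\omega)_{ij}(\dot{\tilde x}) = g(e_i, \nabla_{\dot x} e_j)$. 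Reading off the $(1,2)$-entry and using $g(e_1,e_2)\equiv 0$ gives
\[
\omega^3(s_*\dot{\tilde x}) \;=\; g(e_1, \nabla_{\dot x} e_2) \;=\; -g(\nabla_{\dot x} e_1, e_2).
\]

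Next I would apply Proposition~\ref{torsion-integral-proposition} with the specific choice $\nu(t) := e_1(t)$. This is a legitimate unit normal field along $x(t)$ because $(e_1, e_2, T)$ is an orthonormal frame and $g(e_1, \dot x) = 0$. The hypothesis $h(\nu(a)) = \nu(b)$ is automatic: the monodromy $h$ is built from $A = s(\tilde x(a))$ and $B = s(\tilde x(b))$ by sending the $i$-th basis vector of $A$ to the $i$-th basis vector of $B$, so in particular it sends $e_1(a)$ to $e_1(b)$. The cross product evaluates cleanly thanks to the orientation convention: $T \times \nu = T \times e_1 = e_2$. Therefore
\[
\mathscr{T}[x(t)] \;\equiv\; -\int_a^b g(\nabla_{\dot x} e_1, e_2)\,{\rm d}t \;=\; \int_a^b \omega^3(s_*\dot{\tilde x})\,{\rm d}t \mod 2\pi\mathbb{Z}.
\]

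Comparing with the alternative expression $\mathscr{L}[x(t)] = 2\int_a^b \omega^3(s_*\dot{\tilde x})\,{\rm d}t$ recorded in the definition of $\mathscr{L}$, we obtain $\mathscr{L}[x(t)] \equiv 2\mathscr{T}[x(t)] \mod 4\pi\mathbb{Z}$, and hence a fortiori mod $2\pi\mathbb{Z}$.

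The only real subtlety is bookkeeping of signs and conventions in the two short steps above: one needs to confirm that the $(1,2)$-entry of the pulled-back connection form really equals $\omega^3$ (as opposed to $-\omega^3$), and that the orientation convention on $\mathcal{P}$ makes $T\times e_1 = e_2$ rather than $-e_2$. Both are dictated by the preceding lemma and the presentation of $\omega$ in the excerpt, so the argument is self-consistent once these conventions are fixed; there is no deeper analytic obstacle.
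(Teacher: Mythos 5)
Your proposal is correct and follows essentially the same route as the paper: write $s(\tilde x(t))$ as an oriented orthonormal frame $(\nu,\nu',T)$, identify $\omega^3\bigl(s_*\dot{\tilde x}\bigr)=g(\nu,\nabla_{\dot x}\nu')=-g(\nabla_{\dot x}\nu, T\times\nu)$, note that the monodromy condition $h(\nu(a))=\nu(b)$ holds by construction, and conclude via Proposition~\ref{torsion-integral-proposition} and the second expression in \eqref{conf-geod-functional}. The extra observations (the explicit $(1,2)$-entry bookkeeping and the mod $4\pi\mathbb{Z}$ refinement) are fine but not needed.
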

\begin{proof}
If we write $s(\tilde x(t))=(\nu(t), \nu'(t), T(t))$, then $\nu(t)$ is a unit normal vector field satisfying $h(\nu(a))=\nu(b)$ and we have
\[
-g(\nabla_{\dot x}\nu, T\times \nu)=-g\bigl(\nabla_{\dot x}\nu, \nu'\bigr)=\omega^3\bigl(s_*\dot{\tilde x}(t)\bigr).
\]
Hence we obtain the equation by \eqref{conf-geod-functional} and \eqref{torsion-integral}.
\end{proof}

It follows from this proposition and Theorem~\ref{variational-characterization} that a conformal geodesic $x(t)$, $a\le x\le b$, is characterized as a critical curve for the total torsion functional with fixed $A$, $B$ under the variations fixing $\tilde x(a)$, $\tilde x(b)$.

When the curve $x(t)$ is parametrized by an arclength parameter and has non-vanishing geodesic curvature ($\nabla_{\dot x}\dot x\neq 0$), we can use
\[
\nu=\frac{\nabla_{\dot x}\dot x}{|\nabla_{\dot x}\dot x|}
\]
as a unit normal vector field. In this case, the function
\[
\tau(t):=g(\nabla_{\dot x}\nu, T\times \nu)
\]
is called the {\it torsion (function)} of $x(t)$. It is well-known that the torsion is represented as
\[
\tau(t)=\frac{\det (\dot x, \nabla_{\dot x}\dot x, \nabla_{\dot x}\nabla_{\dot x}\dot x)}{|\dot x\times \nabla_{\dot x}\dot x|^2}
\]
for an arbitrary parametrization. It may not hold that $h(\nu(a))=\nu(b)$ for $A=s(\tilde x(a))$, ${B=s(\tilde x(b))}$, but we can change $B$ so that it holds, and this only causes a difference in $\mathscr{T}$ by a~constant $c\in\mathbb{R}/2\pi\mathbb{Z}$. Hence, we have
\[
\mathscr{T}[x(t)]\equiv-\int_a^b \frac{\det (\dot x, \nabla_{\dot x}\dot x, \nabla_{\dot x}\nabla_{\dot x}\dot x)}{|\dot x\times \nabla_{\dot x}\dot x|^2}|\dot x|\,{\rm d}t+c\mod 2\pi\mathbb{Z}
\]
for an arbitrary parametrization. Thus we complete the proof of Theorem~\ref{conf-geod-total-torsion}.

Chern--Kn\"oppel--Pedit--Pinkall~\cite{CKPP} calculated the variation of the total torsion functional.
We define $*R\in T^*\Sigma\otimes \mathfrak{so}(T\Sigma)$ by applying the Hodge star operator $*\colon \wedge^2 T^*\Sigma\to T^*\Sigma$ to the curvature tensor $R\in \wedge^2 T^*\Sigma\otimes \mathfrak{so}(T\Sigma)$. In the index notation, we have
\[
(*R)_i{}^k{}_l=\frac{1}{2}\varepsilon_i{}^{pq}R_{pq}{}^k{}_l.
\]

\begin{Theorem}[{\cite[Theorem 2]{CKPP}}]\label{variation-T}
Let $x(t)$, $a\le t \le b$, be a regular curve on $(\Sigma, g)$. Assume~$x_\epsilon (t)$
is a~variation of $x(t)$ which fixes $(x(a), \dot x(a)/|\dot x(a)|)$ and $(x(b), \dot x(b)/|\dot x(b)|)$, and $V(t)=(\partial/\partial \epsilon)|_{\epsilon=0}x_\epsilon(t)$ its variation vector field. If we define the total torsion $\mathscr{T}[x_\epsilon(t)]$ by using common orthonormal basis at the endpoints, we have
\[
\frac{\mathrm{d}}{\mathrm{d} \epsilon}\bigg|_{\epsilon=0}\mathscr{T}[x_\epsilon(t)]=
\int_a^b g\bigl(T\times \nabla_T\nabla_T T+(*R)(T)T, V\bigr)|\dot x|\,{\rm d}t,
\]
where $T=\dot x/|\dot x|$.
\end{Theorem}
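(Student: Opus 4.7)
The plan is to prove the formula by differentiating, along the variation $x_\epsilon(t)$, the integral representation of $\mathscr{T}$ supplied by Proposition~\ref{torsion-integral-proposition}, and collapsing the result via integration by parts together with a three-dimensional Hodge-dual identity that identifies the curvature contribution with $(*R)(T)T$.

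The first step is to choose a smooth family of unit normal vector fields $\nu_\epsilon(t)\in T_\epsilon(t)^\perp$ along $x_\epsilon$ with $\nu_\epsilon(a)=\nu(a)$ and $\nu_\epsilon(b)=h(\nu(a))$ held fixed in $\epsilon$. This is possible because the variation preserves both unit tangents at the endpoints (so the orthogonal complements are constant in $\epsilon$) and the monodromy $h$ (which is determined by the fixed pair $A,B$). Proposition~\ref{torsion-integral-proposition} then gives the uniform representation
\[
\mathscr{T}[x_\epsilon]\equiv -\int_a^b g(\nabla_{\dot x_\epsilon}\nu_\epsilon,\, T_\epsilon\times\nu_\epsilon)\,{\rm d}t \pmod{2\pi\mathbb{Z}},
\]
whose derivative at $\epsilon=0$ is unambiguous. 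Differentiating under the integral and using $[V,\dot x]=0$, the Ricci identity $\nabla_V\nabla_{\dot x}\nu=\nabla_{\dot x}\nabla_V\nu+R(V,\dot x)\nu$, and the Leibniz rule for the cross product, I would split the $\epsilon$-derivative of the integrand at $\epsilon=0$ into a curvature piece $g(R(V,\dot x)\nu,T\times\nu)$; a total $t$-derivative $\partial_t g(\nabla_V\nu,T\times\nu)$, whose integral vanishes because $V(a)=V(b)=0$ and $\nu_\epsilon$ has $\epsilon$-constant endpoints; and two mixed algebraic terms.

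Expanding the mixed terms in the oriented orthonormal frame $(\nu,T\times\nu,T)$ collapses them, via the elementary triple-product identity in the two-plane $T^\perp$, into $g(T,\nabla_{\dot x}T\times\nabla_V T)$, which is manifestly independent of $\nu$. Using $\nabla_V T=(\nabla_{\dot x}V)^\perp/|\dot x|$ and $\nabla_{\dot x}T=|\dot x|\nabla_T T$ and integrating by parts once more against $\nabla_{\dot x}$ (boundary terms killed by $V(a)=V(b)=0$), this becomes $-\int g(V,T\times\nabla_T\nabla_T T)|\dot x|\,{\rm d}t$ via the identity $\nabla_{\dot x}(T\times\nabla_T T)=|\dot x|\,T\times\nabla_T\nabla_T T$. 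Combined with the outer minus sign of $\mathscr{T}=-\int\omega\,{\rm d}t$, this yields the claimed $T\times\nabla_T\nabla_T T$ contribution.

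The main obstacle is the curvature piece, which a priori depends on $\nu$. Substituting the three-dimensional Schouten decomposition~\eqref{curvature-tensor} of the Riemann tensor and using $g(\dot x,\nu)=g(\dot x,T\times\nu)=0$, the curvature integrand reduces to $|\dot x|\bigl(P(T,\nu)g(V,T\times\nu)-P(T,T\times\nu)g(V,\nu)\bigr)$, which equals $|\dot x|g(V,T\times P(T))$ in the $(\nu,T\times\nu,T)$-frame and so is $\nu$-independent. A direct index computation from $(*R)_i{}^k{}_l=\frac{1}{2}\e_i{}^{pq}R_{pq}{}^k{}_l$, the Schouten expression for $R_{pq}{}^k{}_l$, and the antisymmetry identity $\e_{ipl}T^iT^l=0$ then gives $(*R)(T)T=-T\times P(T)$; tracking signs through the overall $-\int\delta\omega$ yields the curvature contribution $\int g((*R)(T)T,V)|\dot x|\,{\rm d}t$ to ${\rm d}\mathscr{T}/{\rm d}\epsilon$. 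This three-dimensional Hodge-dual identification, along with the accompanying sign bookkeeping, is the delicate step; every other step is a routine application of the Leibniz rule and metric compatibility.
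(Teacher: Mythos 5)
The paper does not actually prove this statement: it is imported as Theorem~2 of Chern--Kn\"oppel--Pedit--Pinkall~\cite{CKPP}, and the only computation the paper performs in its vicinity is the identity $(*R)(T)T=-T\times P(T)$, used afterwards to convert the stated variation formula into the conformal geodesic equation. Your proposal therefore supplies a proof where the paper supplies a citation, and the proof is correct. I checked the key steps: differentiating the integral representation of Proposition~\ref{torsion-integral-proposition} along an $\epsilon$-family $\nu_\epsilon$ of unit normals pinned at the endpoints is legitimate, since the endpoint unit tangents and the monodromy are fixed by hypothesis and the mod-$2\pi$ ambiguity is locally constant in $\epsilon$; the commutation $\nabla_V\nabla_{\dot x}\nu=\nabla_{\dot x}\nabla_V\nu+R(V,\dot x)\nu$ and the vanishing of the boundary term use exactly $V(a)=V(b)=0$ and $\nabla_V\nu=0$ at the endpoints; in the frame $(\nu,T\times\nu,T)$ the two surviving mixed terms do sum to $\det\bigl(T,\nabla_{\dot x}T,\nabla_V T\bigr)$ (the other two Leibniz terms vanish identically because $\nabla_V\nu$ and $\nabla_{\dot x}\nu$ are orthogonal to $\nu$), and the subsequent integration by parts with $\nabla_{\dot x}(T\times\nabla_T T)=|\dot x|\,T\times\nabla_T\nabla_T T$ produces the acceleration term with the correct sign; finally, the Schouten decomposition~\eqref{curvature-tensor} together with $g(\dot x,\nu)=g(\dot x,T\times\nu)=0$ reduces the curvature term to $|\dot x|\,g(V,T\times P(T))$, which the identity $(*R)(T)T=-T\times P(T)$ --- precisely the computation the paper records after the theorem --- converts to the stated form. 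The one point worth writing out more carefully in a final version is the existence of the smooth family $\nu_\epsilon$ with both endpoint values held fixed; it follows by rotating the $\nabla^\perp$-parallel normal field along each $x_\epsilon$ through an angle function chosen continuously in $\epsilon$, and it is the only place where an existence argument, rather than a computation, is needed.
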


We obtain an alternative proof of Theorem~\ref{conf-geod-total-torsion} from this formula.
By \eqref{curvature-tensor},
we have
\[
(*R)_{mkl}=\frac{1}{2}\varepsilon_m{}^{ij}R_{ijkl}
=\frac{1}{2}\bigl(\varepsilon_m{}^i{}_l P_{ik}-\varepsilon_{ml}{}^j P_{jk}
+\varepsilon_{mk}{}^j P_{jl}-\varepsilon_m{}^i{}_k P_{il}\bigr)
\]
and hence
\begin{align*}
((*R)(T)T)^k =(*R)_m{}^k{}_lT^m T^l
 =\frac{1}{2}\bigl(\varepsilon_m{}^k{}_jP^j{}_l T^m T^l-\varepsilon_{mi}{}^kP^i{}_l T^mT^l\bigr)
 =-(T\times P(T))^k.
\end{align*}
Thus, we have
\[
T\times \nabla_T\nabla_T T+(*R)(T)T=
T\times (\nabla_T \nabla_T T-P(T)),
\]
which implies that the variation of the total torsion vanishes if and only if $x(t)$ is a conformal geodesic.

\subsection*{Acknowledgements}
The author thanks the anonymous referees for their comments.
This work was partially supported by JSPS KAKENHI Grant Number 22K13922.

\pdfbookmark[1]{References}{ref}
\LastPageEnding

\end{document}